\documentclass[11pt]{amsart}
\usepackage{amsmath,amssymb,latexsym,dsfont}

\textwidth 15.5cm \oddsidemargin 0.75cm \evensidemargin 0.75cm
\addtolength{\textheight}{2cm} \addtolength{\topmargin}{-1cm}

\usepackage[svgnames]{xcolor}
\usepackage{soul}
 \usepackage{tikz}
\usepackage{hyperref}
\usepackage{amscd}
\usepackage{amsfonts}
\usepackage{indentfirst}
\usepackage{verbatim}
\usepackage{amsmath}
\usepackage{amsthm}
\usepackage{enumerate}
\usepackage{graphicx}
\usepackage{subfig}
\usepackage{color}
\usepackage[OT1]{fontenc}
\usepackage[latin1]{inputenc}
\usepackage[english]{babel}
\usepackage{amssymb, mathabx}%\usepackage[T5]{fontenc} \usepackage[utf8]{inputenc} \usepackage[vietnam]{babel} 
\sethlcolor{LightBlue}
\makeatletter
\def\SOUL@hlpreamble{%
\setul{\dp\strutbox}{\dimexpr\ht\strutbox+\dp\strutbox\relax}
\let\SOUL@stcolor\SOUL@hlcolor
\SOUL@stpreamble
}
\makeatother

\newtheorem{theorem}{Theorem}[section]
\newtheorem{lemma}{Lemma}[section]

\newtheorem{proposition}{Proposition}[section]
\newtheorem{corollary}{Corollary}[section]
\newtheorem{remark}{Remark}[section]

\newtheorem{definition}{Definition}[section]
\setcounter{equation}{0}
\numberwithin{equation}{section}
       \newcommand{\proofend}{\hfill $\Box$ }

    % ab - cd = a(b-d) + d (a-c)
%      \theoremstyle{plain}
%      \newtheorem{theorem}{Theorem}[section]
%      \newtheorem{lemma}[theorem]{Lemma}
%      \newtheorem{corollary}[theorem]{Corollary}
%      \newtheorem{proposition}{Proposition}
%      
%      %\theoremstyle{definition}
%      %\newtheorem{definition}[theorem]{Definition}
%      
%      %\theoremstyle{remark}
%      %\newtheorem{remark}[theorem]{Remark}

      \newcommand{\R}{{\mathbb{R}}}

      \newcommand{\curl}{\operatorname{curl}}
      \newcommand{\dive}{\operatorname{div}}

      \newcommand{\loc}{\operatorname{loc}}
      
      \newcommand{\eps}{\varepsilon}
      \newcommand{\mR}{\mathbb{R}}
      \newcommand{\mC}{\mathbb{C}}

       \newcommand{\dist}{\mbox{dist}}

      \newcommand{\bE}{{{\bf E}}}
      \newcommand{\bH}{{{\bf H}}}
      
      \newcommand{\hbE}{{{\hat{\bf E}}}}
      \newcommand{\hbH}{{{\hat {\bf H}}}}

         \newcommand{\hE}{{\hat E}}
      \newcommand{\hH}{{\hat H}}

           \newcommand{\hmu}{{\hat \mu}}
      \newcommand{\heps}{{\hat \eps }}

           \newcommand{\mup}{\mu^+}
           \newcommand{\mun}{\mu^-}

           \newcommand{\mmu}{ {\bf m}}
           \newcommand{\eeps}{{\bf e}}

           \newcommand{\ep}{\eps^+}
           \newcommand{\en}{\eps^-}

        \newcommand{\hvarphi}{\hat \varphi}
        
         \newcommand{\hJ}{{\hat J }}
       
        \newcommand{\dE}{{\delta_E}}
        \newcommand{\dEn}{{\delta_{E_n}}}
        \newcommand{\dHn}{{\delta_{H_n}}}

        \newcommand{\dEd}{{\delta_{E_\delta}}}
        \newcommand{\dHd}{{\delta_{H_\delta}}}

          \newcommand{\bdkE}{\delta_{{\bf E}_k}}

         \newcommand{\dH}{{ \delta_H}}

        \newcommand{\dpE}{{\delta_{\varphi E}}}

        \newcommand{\dpkE}{{\delta_{\varphi_k E}}}

 \newcommand{\cE}{{{\mathcal  E}}}
            \newcommand{\cH}{{{\mathcal H}}}

     \newcommand{\supp}{\mbox{supp }}

      \makeatletter
      \def\@setcopyright{}
      \def\serieslogo@{}
      \makeatother

\begin{document}

  \author[H.-M. Nguyen]{Hoai-Minh Nguyen}
\author[S. Sil]{Swarnendu Sil}

\address[H.-M. Nguyen]{Department of Mathematics, EPFL SB CAMA, Station 8,  \newline\indent
	 CH-1015 Lausanne, Switzerland.}
\email{hoai-minh.nguyen@epfl.ch}

\address[S. Sil]{Forschungsinstitut f\"{u}r Mathematik, 
	ETH Zurich \newline\indent R\"{a}mistrasse 101, 8092 Zurich, Switzerland.}
\email{swarnendu.sil@fim.math.ethz.ch}

\title[Maxwell equation with anisotropic sign-changing coefficients]{Limiting absorption principle and well-posedness for the time-harmonic Maxwell equations with anisotropic sign-changing coefficients}

\begin{abstract}  We study the limiting absorption principle and the well-posedness of Maxwell equations with anisotropic sign-changing coefficients in the time-harmonic domain.  The starting point of the analysis is to obtain  Cauchy problems associated with two Maxwell systems  using a change of variables.   We then derive a priori estimates for these Cauchy problems using two different approaches. The Fourier approach involves the complementing conditions for the Cauchy problems associated with two elliptic equations,  which were  studied in a general setting by Agmon, Douglis, and Nirenberg. The  variational approach explores the variational structure of the Cauchy problems of the Maxwell equations.  As a result, we obtain general  conditions on the coefficients for which the  limiting absorption principle and the well-posedness hold. Moreover,  these {\it new} conditions are  of a local character  and easy to check.   Our work is motivated by and provides  general sufficient criteria for  the stability of electromagnetic fields in the context of  negative-index metamaterials.

\end{abstract}

 %  \dedicatory{}

   % today's date, or fill in whatever date you prefer
   \date{\today}

   \maketitle

\medskip 
\noindent{\bf Key words}:  Maxwell equations, sign-changing coefficients, well-posedness,  limiting absorption principle, Cauchy problems, resonance,  negative-index metamaterials,

\medskip 

\noindent{\bf AMS subject classifications}: 35B34, 35B35, 35B40, 35J05, 78A25.

\tableofcontents

% =======================================================================
% =======================================================================
% ======================================================================

%\tableofcontents

\section{Introduction}

Negative-index metamaterials  are artificial structures whose  refractive index has a negative value over some frequency range. Their existence was  postulated by Veselago  in 1964  \cite{Veselago} and  confirmed experimentally by Shelby, Smith, and Schultz in 2001 \cite{ShelbySmithSchultz}.   Negative-index metamaterial research  has been a very active topic of investigation  not only because of potentially interesting applications, but also because of challenges involved in understanding their peculiar properties due to the sign-changing coefficients in the equations modeling the phenomena.  

In  this paper, we study the stability of electromagnetic fields in the context of negative-index metamaterials from a mathematical point of view. More precisely, we  study the limiting absorption principle and the well-posedness of Maxwell equations with anisotropic sign-changing coefficients in the time-harmonic domain. 
Let $D$ be an open, bounded subset of $\mR^3$ of class $C^1$ and let $\ep, \mup$ be defined in $\mR^3 \setminus \bar D$,  and $\en, \mun$ be defined in $D$ such that 
$\ep, \mup$, $- \en$, and $- \mun$  are real,  symmetric,  {\it uniformly elliptic} matrix-valued functions.  Set, for $\delta \ge 0$,  
\begin{equation}\label{def-eDelta}
(\eps_\delta, \mu_\delta)  = \left\{\begin{array}{cl}
(\ep, \mup) & \mbox{ in } \mR^3 \setminus \bar D, \\[6pt]
(\en + i \delta I,  \mun + i \delta I)  & \mbox{ in } D. 
\end{array} \right. 
\end{equation}
Here and in what follows, $I$ denotes the $(3 \times 3)$ identity matrix and  $\bar D$ denotes the closure of $D$.  We also denote $B_R$ the open ball in $\mR^3$ centered at the origin and of radius $R>0$. 
We assume as usual  that for some $R_0 > 0$,   $D \subset B_{R_0}$, $(\ep, \mup) = (I, I)$ in $\mR^3 \setminus B_{R_0}$, and
\begin{equation}
\ep, \mup, \en, \mun \mbox{ are piecewise } C^1. 
\end{equation}

Given $\delta > 0$ and $J \in [L^2(\mR^3)]^3$ with compact support, let $(E_\delta, H_\delta) \in [H_{\loc}(\curl, \mR^3)]^2$ be  the unique radiating solution of the Maxwell equations 
\begin{equation}\label{Main-eq-delta}
\left\{\begin{array}{cl}
\nabla \times E_\delta = i \omega \mu_\delta H &  \mbox{ in } \mR^3, \\[6pt]
\nabla \times H_\delta = - i \omega \eps_\delta E + J &  \mbox{ in } \mR^3.  
\end{array} \right.
\end{equation}
Physically, $\eps_\delta$ and $\mu_\delta$ describe the permittivity and the permeability of the considered medium,   $\omega$ is the frequency,  $D$ is a plasmonic structure of negative-index metamaterials and 
$i \delta I$ describes its loss, and $J$ is the density of charge. 

The goals of this paper are to derive general conditions on $(\eps_0, \mu_0)$ for which $(E_\delta, H_\delta)$ are bounded and to characterize its limit.  The  corresponding scalar version of Maxwell equations in the frequency regime is the Helmholtz equation. The well-posedness of the Helmholtz equation with sign-changing coefficients has been studied intensively, with the initial investigations initiated by Costabel and 
Stephan \cite{CostabelErnst}. Later, the well-posedness and the Fredholms character were   investigated  using the integral method \cite{Ola} and  the  $T$-coercivity method \cite{AnneSophieChesnelCiarlet1, AnneSophieChesnelCiarlet1-1} (and the references therein). Recently, one of the authors \cite{Ng-WP} obtained general results on the limiting absorbtion principle and  the well-posedness for the Helmholtz equation. His approach is based on a priori estimates using Fourier and variational approaches for Cauchy problems associated with two elliptic equations derived naturally in this context.  
The use of the Cauchy problems in the context of the Helmholtz equations with sign-changing coefficients originally appeared in \cite{Ng-Complementary}.  The theory behind the Helmholtz equations  is now almost complete and is the state-of-the-art in this area.

There is very little in the mathematical literature about the stability of the (full) Maxwell equations with sign-changing coefficients.  The only known work in this direction that could help us 
is due to Bonnet-Ben Dhia, Chesnel, and Ciarlet \cite{AnneSophieChesnelCiarlet3} using the  $T$-coercivity method. They considered a bounded setting  with isotropic coefficients and obtained the well posedness outside a discrete set of $\omega$  under requirements on $\eps$ and $\mu$ of a non-local nature, which is generally difficult to check. The goal of this paper is to fill this gap by developing  the approach in \cite{Ng-WP}   for the Maxwell equations. The idea   is  first to obtain the Cauchy problems associated with two Maxwell systems via a change of variables  and then to derive a priori estimates for these Cauchy problems.   Two approaches  are proposed to 
obtain these estimates. The  Fourier approach involves the complementing conditions for Cauchy problems associated with two elliptic equations,  which were studied in more general setting by  Agmon, Douglis, and Nirenberg \cite{ADNII}. The variational approach explores the variational structure of the Cauchy problems. In comparison  with the acoustic setting \cite{Ng-WP}, 
new ideas are required to handle the complex structure of the Maxwell equations
and to be able to apply the theory on complementing conditions for elliptic systems. In particular, various forms of the Poincar\'e lemma and  the Helmholtz decomposition are used  with a suitable implementation of local charts to avoid imposing topological conditions.  
As a result, we obtain very general  conditions on the (anisotropic) coefficients to ensure the well-posedness and the limiting absorption principle. Moreover, these {\it new} conditions are local and easy to check. 
For example, the well-posedness and the limiting absorption principle  hold under the condition $i)$ $(\ep, -\en)$ and  $(\mup, -\mun)$ 
are smooth near the interface $\partial D$ and both satisfy the complementary condition on the interface (Theorem~\ref{thm1} and Corollay~\ref{corADN}), or  $ii)$ $\eps$ and $\mu$ are isotropic near the interface and $|\ep +  \en|$ and  $|\mup + \mun|$ are away from $0$ there (Theorem~\ref{thm2} and Corollary~\ref{corisotropic-2}), or $iii)$ $\ep,  \, -\en, \, \mup, \,  \mun$ are equal to $I$ near the interface and the interface is (strictly) convex (Corollary~\ref{corisotropic-3}).

 Our work thus provides general sufficient criteria for  the stability of electromagnetic fields in the context of negative-index metamaterials in the electromagnetic setting. This work also clarifies  the role of the complementary conditions considered in \cite{Ng-Superlensing-Maxwell} in ensuring the resonance associated with negative-index metamaterials  appeared in their various applications,  such as superlensing \cite{Ng-Superlensing-Maxwell} and cloaking \cite{Ng-CALR-Maxwell, Ng-Cloaking-Maxwell},  in the electromagnetic setting. Previous mathematical works on applications of negative-index metamaterials,  such as superlensing, 
cloaking using complementary media,  cloaking via anomalous localized resonance for a source or for an object,  can be found in  \cite{Ng-Superlensing}, \cite{Ng-Negative-Cloaking, MinhLoc2},  \cite{A-M1,  KangLY, KohnLu, MiltonNicorovici, Ng-CALR, Ng-CALR-F, Ng-CALR-Maxwell, NicoroviciMcPhedranMilton94},  and \cite{Ng-CALR-object}, respectively, and the references therein.

\section{Statement of the main results}

The starting point of our analysis  is to obtain Cauchy problems associated with two Maxwell systems using a change of variables.   
A central point of the analysis is then to derive a priori estimates for these Cauchy problems.
We first use  a  Fourier approach involving the theory of complementing conditions as developed by  Agmon, Douglis, and Nirenberg \cite{ADNII}. Given  a unit vector $e \in \mR^3$, denote  
$$
\mR^3_{e, +} = \Big\{ x \in \mR^3; \langle x, e \rangle > 0 \Big\} \quad \mbox{ and } \quad \mR^3_{e, 0} = \Big\{ x \in \mR^3; \langle x, e \rangle = 0 \Big\}. 
$$
Here and in what follows,  $\langle \cdot, \cdot \rangle$ denotes the standard scalar product in $\mR^3$ or $\mC^3$.  
The definition of the complementing condition for the Cauchy problem of two elliptic equations is as follows 
 
\begin{definition}[Agmon, Douglis, Nirenberg \cite{ADNII}]  Two constant,  positive, symmetric matrices $A_1$ and $A_2$ are said to satisfy the (Cauchy) complementing boundary condition with respect to  direction $e  \in \partial B_1$ if and only if for all $\xi \in \mR^3_{e, 0} \setminus \{0 \}$, the only solution $(u_1(x), u_2(x))$ of the form $\big(e^{i \langle y,  \xi \rangle  } v_1(t), e^{i \langle y,  \xi \rangle} v_2(t) \big)$ with $x  =  y + t e$ where  $t = \langle x, e \rangle $, of the  following system 
\begin{equation*}
\left\{ \begin{array}{c}
\dive(A_1 \nabla u_1) = \dive(A_2 \nabla u_2) = 0 \mbox{ in } \mR^3_{e, +}, \\[6pt]
u_1 = u_2 \mbox{ and } A_1 \nabla u_1 \cdot e = A_2 \nabla u_2 \cdot e \mbox{ on } \mR^3_{e, 0}, 
\end{array}\right.
\end{equation*}
that is bounded in $\mR^3_{e, +}$ is $(0, 0)$. 
\end{definition}

\begin{remark} \rm Agmon, Douglis, and Nirenberg \cite{ADNII} (see also \cite{ADNI})  considered the complementing conditions for a general elliptic system  and derived properties for them. The special case given in the above definition found a particular interest in the context of the Helmholtz equations with sign-changing coefficients (see \cite{Ng-WP}). 

\end{remark}

Denote
$$
\Gamma = \partial D, 
$$
and, for $\tau > 0$, set
\begin{equation*}
D_\tau = \Big\{x \in D;\;  \dist(x, \Gamma) < \tau \Big\} \quad \mbox{ and } \quad D_{-\tau} = \Big\{x \in \mR^3 \setminus \bar D; \; \dist(x, \Gamma)  < \tau \Big\}.
\end{equation*}

For the Fourier approach, we prove the following result: 

\begin{theorem}  \label{thm1}
Let $0 < \delta < 1$, $J \in [L^2(\mR^3)]^3$ with $\supp J \subset B_{R_0}$, and  
let $(E_\delta, H_\delta) \in [H_{\loc} (\curl, \mR^3)]^2$ be the unique radiating solution of \eqref{Main-eq-delta}. Assume that $\ep, \, \mup \in C^1(\bar D_{-\tau})$, $\en, \, \mun \in C^1(\bar D_{\tau})$ for some $\tau > 0$, and $(\ep,- \en)(x)$ {\rm and} $(\mup, - \mun)(x)$ satisfy the $($Cauchy$)$ {\rm complementing} conditions with respect to the unit normal vector $\nu(x)$ for all $x \in \Gamma$. Then 
\begin{equation}\label{T1-1}
\| (E_\delta, H_\delta) \|_{L^2(B_R)} \le C_R \| J\|_{L^2(\mR^3)} \quad \forall \, R > 0, 
\end{equation}
for some positive constant $C_R$ independent of $\delta$ and $J$.  Moreover,  $(E_\delta, H_\delta)$ converges to $(E_0, H_0)$ strongly in $[L^2_{\loc}(\mR^3)]^6$ as $\delta \to 0$, where  $(E_0, H_0) \in [H_{\loc}(\curl, \mR^3)]^2$ is the  unique radiating solution of \eqref{Main-eq-delta} with $\delta = 0$.  As a consequence, we have
\begin{equation}\label{T1-2}
\| (E_0, H_0) \|_{L^2(B_R)} \le C_R \| J\|_{L^2(\mR^3)} \quad \forall \, R > 0. 
\end{equation}
\end{theorem}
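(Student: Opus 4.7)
The argument follows the three-step template outlined in the introduction: reflect to turn the sign-changing transmission problem into a Cauchy problem on one side of $\Gamma$, use the complementing-condition theory of Agmon--Douglis--Nirenberg to derive an a priori estimate, and finally run a compactness--contradiction argument to close the loop and pass to the limit $\delta \to 0$.

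\emph{Step 1: reduction to a Cauchy problem.} After shrinking $\tau$ so that the signed distance to $\Gamma$ produces a tubular neighborhood, fix a $C^1$ diffeomorphism $F \colon \overline{D_\tau} \to \overline{D_{-\tau}}$ which is the identity on $\Gamma$ and reflects along the normal. Using the standard pushforward of Maxwell's equations under $F$, set $(\hE_\delta, \hH_\delta) := F_*(E_\delta, H_\delta)$ in $D_{-\tau}$. Then $(\hE_\delta, \hH_\delta)$ satisfies Maxwell's system there with uniformly positive coefficients $(\heps_\delta, \hmu_\delta)$ (the sign flip comes from the orientation-reversing Jacobian), while the physical transmission conditions $\nu \times E_\delta = \nu \times \hE_\delta$ and $\nu \times H_\delta = \nu \times \hH_\delta$ on $\Gamma$, together with continuity of normal fluxes, express that $(E_\delta, H_\delta)$ and $(\hE_\delta, \hH_\delta)$ solve a pair of Maxwell systems in $D_{-\tau}$ matched by overdetermined Cauchy data on $\Gamma$.

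\emph{Step 2: a priori estimate.} Localize on $\Gamma$ with a $C^1$ partition of unity whose supports are small, simply-connected charts. In each chart, use a Helmholtz-type decomposition to introduce scalar potentials for the four fields; combined with the divergence relations $\dive(\eps_\delta E_\delta) = (i\omega)^{-1} \dive J$ and $\dive(\mu_\delta H_\delta) = 0$ (consequences of \eqref{Main-eq-delta}) and the tangential matching on $\Gamma$, freezing coefficients at an arbitrary $x_0 \in \Gamma$ reduces the half-space problem to the two scalar Cauchy problems appearing in the definition of the complementing condition, governed respectively by the pairs $(\ep(x_0), -\en(x_0))$ and $(\mup(x_0), -\mun(x_0))$. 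The ADN a priori estimate \cite{ADNII} together with a standard perturbation from $C^1$ coefficients, summed over the partition and combined with interior Maxwell regularity away from $\Gamma$, yields
\[
\|(E_\delta, H_\delta)\|_{L^2(B_R)} \le C\bigl(\|J\|_{L^2(\mR^3)} + \|(E_\delta, H_\delta)\|_{L^2(\cO)}\bigr),
\]
where $\cO$ is a fixed compact subset of $\mR^3 \setminus \Gamma$ and $C$ is independent of $\delta \in (0, 1)$.

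\emph{Step 3: contradiction, convergence, limit.} If \eqref{T1-1} failed, rescale to produce $\delta_n \to 0$, $\|J_n\|_{L^2} \to 0$, and $\|(E_n, H_n)\|_{L^2(B_R)} = 1$. The estimate of Step~2, the Silver--M\"uller radiation condition, and div-curl compactness extract a subsequence converging in $L^2_{\loc}$ to a radiating $(E_*, H_*)$ solving the homogeneous Maxwell system with limiting coefficients $(\eps_0, \mu_0)$ and with $\|(E_*, H_*)\|_{L^2(B_R)} = 1$. Applied to the difference of two radiating solutions, Step~2 gives uniqueness at $\delta = 0$, so $(E_*, H_*) \equiv 0$—contradiction. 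This proves \eqref{T1-1}; a standard Cauchy-sequence argument in $\delta$ then yields strong $L^2_{\loc}$-convergence to a radiating solution at $\delta = 0$, and \eqref{T1-2} follows by passing to the limit.

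\emph{Main obstacle.} The heart of the proof is Step~2. The ADN complementing condition is formulated for scalar second-order equations, while Maxwell's system is first-order with a gauge-type redundancy; bringing the two together requires a careful Helmholtz-type decomposition in local charts (chosen simply-connected to avoid topological obstructions, as the introduction emphasizes) and a clean verification that the two tangential/normal Cauchy conditions on $(E, H)$ split into an $\eps$-scalar problem and an independent $\mu$-scalar problem, so that the two hypotheses---complementing for $(\ep, -\en)$ and for $(\mup, -\mun)$---can be invoked separately. The coefficient-freezing perturbation at $C^1$ regularity is standard but must be executed carefully because both the interface and the frozen matrices vary with $x_0$, and the quantitative constants in the ADN estimate must be shown uniform in $x_0 \in \Gamma$.
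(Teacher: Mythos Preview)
Your three-step outline mirrors the paper's strategy (reflect to a Cauchy problem, invoke ADN via scalar potentials, contradiction), but Step~2 as written contains a genuine gap.

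The ADN estimate for the scalar Cauchy problem does \emph{not} produce a remainder supported on a compact set $\cO$ away from $\Gamma$. What it actually gives (see the paper's Lemma~\ref{lem-compact-1-*}) is an estimate on the collar near $\Gamma$ whose lower-order term is a \emph{weaker norm on that same collar}---concretely, the $[H^1(\Omega)]^*$ norm of the gradient part of the decomposition. To get from there to compactness requires the paper's Lemmas~\ref{lem-compact-pre} and~\ref{lem-trace}: one writes $E=T(E)+\nabla\varphi$ with $T(E)\in H^1$ (hence compact in $L^2$), applies ADN to the pair $(\varphi,\hvarphi)$, and then shows separately that bounded $L^2$ sequences are precompact in $[H^1]^*$. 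Your sentence ``The ADN a priori estimate \dots summed over the partition \dots yields \dots $\|(E_\delta,H_\delta)\|_{L^2(\cO)}$'' skips exactly this mechanism; there is no direct route from the ADN inequality to a remainder living away from $\Gamma$. In particular, the commutator terms from the partition of unity are not the issue---the lower-order term is intrinsic to the (merely Fredholm, not coercive) Cauchy problem for variable coefficients.

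A second, smaller point: uniqueness at $\delta=0$ is not obtained by ``Step~2 applied to differences'' (that estimate still carries the unabsorbed lower-order term). The paper argues directly (Lemma~\ref{lem-unique-1}): a radiating solution of the homogeneous system has $\Re\int_{\partial B_{R_0}}(H_0\times\nu)\cdot\bar E_0=0$, hence vanishes outside $B_{R_0}$ by Rellich, and then everywhere by unique continuation. Once compactness near $\Gamma$ (Lemma~\ref{lem-compact-1}) and this uniqueness are in hand, the contradiction and the strong convergence go through as you describe.
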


Let $\Omega$ be an open set of $\mR^3$. Given $(u_n) \subset L^2_{\loc}(\Omega)$ and $u \in L^2_{\loc}(\Omega)$, 
one says that $u_n$ converges to $u$ in $L^2_{\loc}(\Omega)$ if $u_n$ converges to $u$ in $L^2(K)$ for every compact subset $K$ of $\Omega$.  Recall that, for $\omega> 0$, a solution $(E, H) \in [H_{\loc}(\curl, \R^3\setminus B_R)]^2$, for some $R> 0$, of the Maxwell equations 
\[
\begin{cases}
\nabla \times E = i \omega H  &\text{ in } \mathbb{R}^3\setminus B_R,\\[6pt]
\nabla \times H = -i \omega E  &\text{ in } \mathbb{R}^3\setminus B_R,
\end{cases}
\]
is called radiating if it satisfies one of the (Silver-M\"{u}ller) radiation conditions
\begin{equation*}
H \times x - |x| E = O(1/|x|) \quad   \mbox{ or } \quad  E\times x + |x| H = O(1/|x|) \qquad \mbox{ as } |x| \to + \infty. 
\end{equation*}
Here and in what follows, for $\alpha \in \mR$, $O(|x|^\alpha)$ denotes a quantity whose norm is bounded by $C |x|^\alpha$ for some constant $C>0$. One also denotes  
\begin{equation*}
H(\curl, \Omega) = \Big\{u \in [L^2(\Omega)]^3; \nabla \times u \in [L^2(\Omega)]^3 \Big\}, 
\end{equation*}
\begin{equation*}
H_{\loc}(\curl, \Omega) = \Big\{u \in [L^2_{\loc}(\Omega)]^3; \nabla \times u \in [L^2_{\loc}(\Omega)]^3 \Big\}, 
\end{equation*}
\begin{equation*}
H (\dive, \Omega) = \Big\{u \in [L^2(\Omega)]^3;  \dive u \in L^2(\Omega) \Big\},  
\end{equation*}
and, on $\partial \Omega$,  $\nu$  the unit normal vector directed to the exterior of $\Omega$. 

\medskip

The proof of Theorem~\ref{thm1} is given in Section~\ref{sect-thm1}. The existence and uniqueness of $(E_0, H_0)$ are also established there. The analysis given in Section~\ref{sect-thm1} requires a number of tools and results, such as  Poincar\'e's lemma,  the Helmholtz decomposition for both electric and magnetic fields, the analysis on the complementing conditions,  duality arguments, and a  result on the trace estimate for $H(\dive, \Omega)$ (Lemma~\ref{lem-trace}), which is interesting in its own right. From the  analysis, we derive that  the complementing conditions for the Cauchy problem associated with two  Maxwell systems follows from the complementing conditions for the Cauchy problems associated with two elliptic equations corresponding to the pair of permittivity and permeability.  To the best of our knowledge, this insight is new, and its discovery makes the  required conditions on $\eps$ and $\mu$ easy to check (see Proposition~\ref{pro-C} below). The optimality of the complementing conditions is discussed in Proposition~\ref{pro-opt},  whose proof is given in the appendix.

\medskip 
To check the complementing condition, one can use its following algebraic characterization (see \cite[Proposition 1]{Ng-WP}). 

\begin{proposition}\label{pro-C}  Let $e \in \partial B_1$,  and let $A_1$ and $A_2$ be two constant, positive, symmetric matrices. Then  $A_1$ and $A_2$ satisfy the $($Cauchy$)$ complementing  condition with respect to $e$ if and only if 
\begin{equation*}
\langle A_2 e, e \rangle \langle A_2 \xi, \xi \rangle  - \langle A_2 e,  \xi \rangle^2 \neq  \langle A_1 e, e \rangle \langle A_1 \xi, \xi \rangle  - \langle A_1 e, \xi \rangle^2 \quad \forall \, \xi \in \mR^3_{e, 0} \setminus \{0 \}. 
\end{equation*}
In particular, if $A_2 > A_1$,  then $A_1$ and $A_2$ satisfy the complementing  condition with respect to $e$. 
\end{proposition}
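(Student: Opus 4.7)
My plan is to separate variables, reduce the PDE system to an ODE system, and then compute the resulting matching determinant explicitly. Plugging the ansatz $u_j(x) = e^{i\langle y,\xi\rangle}v_j(t)$ into $\dive(A_j\nabla u_j) = 0$, and using $\nabla t = e$, $\nabla e^{i\langle y,\xi\rangle} = i\xi e^{i\langle y,\xi\rangle}$, $\xi \perp e$, and the symmetry of $A_j$, a direct calculation reduces each equation to the constant-coefficient second-order ODE
\begin{equation*}
\langle A_j e, e\rangle v_j'' + 2 i \langle A_j e, \xi\rangle v_j' - \langle A_j\xi, \xi\rangle v_j = 0 \quad \text{on } t > 0.
\end{equation*}
The Cauchy matching conditions on $\mR^3_{e,0}$ translate, after cancelling the common exponential, to $v_1(0) = v_2(0)$ together with
\begin{equation*}
i\langle A_1 e, \xi\rangle v_1(0) + \langle A_1 e, e\rangle v_1'(0) = i\langle A_2 e, \xi\rangle v_2(0) + \langle A_2 e, e\rangle v_2'(0).
\end{equation*}

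Next I would analyse the characteristic equation $\langle A_j e,e\rangle \lambda^2 + 2i\langle A_j e,\xi\rangle\lambda - \langle A_j\xi,\xi\rangle = 0$. Its discriminant equals $4\bigl(\langle A_j e,e\rangle\langle A_j\xi,\xi\rangle - \langle A_j e,\xi\rangle^2\bigr)$, which is strictly positive because $e$ and $\xi$ are linearly independent (Cauchy--Schwarz in the inner product $\langle A_j\cdot,\cdot\rangle$ is strict). Writing $q_j := \langle A_j e,e\rangle\langle A_j\xi,\xi\rangle - \langle A_j e,\xi\rangle^2 > 0$, the two roots are $\lambda_j^\pm = \bigl(-i\langle A_j e,\xi\rangle \pm \sqrt{q_j}\,\bigr)/\langle A_j e,e\rangle$, whose real parts $\pm\sqrt{q_j}/\langle A_j e,e\rangle$ have opposite signs. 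Hence the bounded solutions on $\mR^3_{e,+}$ are exactly the one-parameter family $v_j(t) = c_j e^{\lambda_j^- t}$. A direct substitution then gives the key identity
\begin{equation*}
i\langle A_j e, \xi\rangle + \lambda_j^- \langle A_j e, e\rangle = -\sqrt{q_j}.
\end{equation*}

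Substituting back into the matching conditions, the system collapses to $c_1 = c_2$ together with $-\sqrt{q_1}\,c_1 = -\sqrt{q_2}\,c_2$, whose only solution is the trivial one if and only if $\sqrt{q_1} \neq \sqrt{q_2}$, i.e.\ $q_1 \neq q_2$. Running this dichotomy over all $\xi \in \mR^3_{e,0}\setminus\{0\}$ yields the stated equivalence. For the last assertion, I would use the observation that, writing $P$ for the $3\times 2$ matrix with columns $e$ and $\xi$, one has $q_j = \det(P^\top A_j P)$; since $P$ has full column rank, $A_2 > A_1$ implies $P^\top A_2 P > P^\top A_1 P$ as $2\times 2$ positive matrices, and strict monotonicity of the determinant on the cone of positive-definite symmetric matrices yields $q_2 > q_1$ pointwise in $\xi$, so the complementing condition holds automatically. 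I do not anticipate any real obstacle: the argument is a closed-form ODE computation, and the only point requiring care is the sign tracking when identifying the decaying root $\lambda_j^-$ and collapsing the matching condition to the Gram-type quantity $q_j$.
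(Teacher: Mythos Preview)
Your argument is correct: the separation-of-variables reduction to the ODE, the identification of the decaying root $\lambda_j^-$, and the collapse of the matching system to $c_1=c_2$, $\sqrt{q_1}\,c_1=\sqrt{q_2}\,c_2$ are all accurate, and the Gram-determinant interpretation $q_j=\det(P^\top A_jP)$ cleanly handles the ``in particular'' clause via strict monotonicity of the determinant on positive-definite matrices.

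Note, however, that the paper does not actually prove this proposition; it is stated with a citation to \cite[Proposition~1]{Ng-WP} and used as a black box. Your computation is essentially the natural (and presumably the original) proof: there is really only one route here, namely to solve the constant-coefficient ODE explicitly and read off when the $2\times 2$ matching system is nonsingular. So you have supplied a self-contained proof where the paper defers to the literature; nothing further is needed.
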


In this paper, for two  $(3 \times 3)$ matrices $A$ and  $B$, the notation $A > B$ means that $\langle A x, x \rangle  > \langle B x, x \rangle$ for all $x \in \mR^3 \setminus \{0 \}$. A similar convention is used for $A \ge B$.  As a direct consequence of Theorem~\ref{thm1} and Proposition~\ref{pro-C}, one obtains

\begin{corollary}\label{corADN}
Let $0 < \delta < 1$, $J \in [L^2(\mR^3)]^3$ with $\supp J \subset B_{R_0}$,   and  
let $(E_\delta, H_\delta) \in [H_{\loc} (\curl, \mR^3)]^2$ be the unique radiating solution of \eqref{Main-eq-delta}. Assume that $\ep, \, \mup \in C^1(\bar D_{-\tau})$ and $\en, \,  \mun \in C^1(\bar D_{\tau})$ for some $\tau > 0$, and for each connected component of $\Gamma$, 
\begin{equation*}
\Big( \ep  \ge - \en + c I \quad  \mbox{ or } \quad - \en \ge  \ep +  c I \Big), 
\end{equation*}
and
\begin{equation*}
\Big( \mup   \ge - \mun + c I \quad  \mbox{ or } \quad - \mun \ge  \mup  + c I \Big),  
\end{equation*}
for some $c > 0$.  Then 
\begin{equation*}
\| (E_\delta, H_\delta) \|_{L^2(B_R)} \le C_R \| J\|_{L^2(\mR^3)} \quad \forall \, R > 0, 
\end{equation*}
for some positive constant $C_R$ independent of $\delta$ and $J$.  Moreover,  $(E_\delta, H_\delta)$ converges to $(E_0, H_0)$ strongly in $[L^2_{\loc}(\mR^3)]^6$ as $\delta \to 0$, where  $(E_0, H_0) \in [H_{\loc}(\curl, \mR^3)]^2$ is the  unique radiating solution of \eqref{Main-eq-delta} with $\delta = 0$.  As a consequence, we have
\begin{equation*} 
\| (E_0, H_0) \|_{L^2(B_R)} \le C_R \| J\|_{L^2(\mR^3)} \quad \forall \, R > 0. 
\end{equation*}
\end{corollary}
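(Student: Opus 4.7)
The plan is to verify the hypotheses of Theorem~\ref{thm1} pointwise on $\Gamma$ by using Proposition~\ref{pro-C} as the bridge from the algebraic matrix inequalities in the corollary to the (Cauchy) complementing condition required by the theorem. The $C^1$ regularity assumption on $\ep,\mup,\en,\mun$ near $\Gamma$ is already present in both statements, so only the complementing condition itself needs to be checked. Recall also that $\ep,-\en,\mup,-\mun$ are uniformly elliptic by standing assumption, hence positive symmetric, which is what Proposition~\ref{pro-C} requires.

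First I would observe that the (Cauchy) complementing condition is symmetric in its two matrix arguments: the boundary system in its definition is invariant under the simultaneous swap $(A_1,u_1)\leftrightarrow(A_2,u_2)$, so $(A_1,A_2)$ satisfies the condition with respect to $e$ if and only if $(A_2,A_1)$ does. This symmetry is also manifest in the algebraic characterization of Proposition~\ref{pro-C}, which is patently invariant under $A_1\leftrightarrow A_2$.

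Next I would fix an arbitrary $x\in\Gamma$ and pass to the connected component of $\Gamma$ containing it. By assumption, one of the two strict matrix inequalities
\[
\ep(x) > -\en(x) \quad \text{or} \quad -\en(x) > \ep(x)
\]
holds (with a uniform gap $cI$, which plays no role in the complementing condition and is needed only to guarantee that the ordering is preserved along the whole component). The sufficient condition ``$A_2 > A_1$ implies the complementing condition'' from Proposition~\ref{pro-C}, combined with the symmetry noted in the previous paragraph, then shows that $(\ep(x),-\en(x))$ satisfies the (Cauchy) complementing condition with respect to the outward unit normal $\nu(x)$. An identical argument applied to the magnetic coefficients gives the analogous conclusion for $(\mup(x),-\mun(x))$, and since $x\in\Gamma$ was arbitrary, the full complementing hypothesis of Theorem~\ref{thm1} is met on all of $\Gamma$.

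With all hypotheses of Theorem~\ref{thm1} verified, I would invoke it directly to obtain the uniform bound \eqref{T1-1}, the strong convergence $(E_\delta,H_\delta)\to(E_0,H_0)$ in $[L^2_{\loc}(\mR^3)]^6$ as $\delta\to 0$, and the limiting bound \eqref{T1-2}. Because the argument is a one-step reduction, there is no genuine obstacle; the only conceptual subtlety is the symmetry of the complementing condition, which is what makes the alternative ``$\ep\ge -\en+cI$ or $-\en\ge \ep+cI$'' (and similarly for $\mu$) a sufficient condition rather than forcing a fixed ordering of the two matrices.
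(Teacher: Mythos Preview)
Your proposal is correct and is precisely the approach the paper takes: the corollary is stated there as ``a direct consequence of Theorem~\ref{thm1} and Proposition~\ref{pro-C}'' with no further argument given. Your explicit remark on the symmetry of the complementing condition under $A_1\leftrightarrow A_2$ is the only detail worth spelling out, and you have handled it correctly.
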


Theorem~\ref{thm1} (see also its consequence  Corollary \ref{corADN}) provides very general conditions for ensuring the well-posedness and the validity of the limiting absorption principle. One does not require that $\ep + \en$ and $\mup + \mun$ have the same sign on $\Gamma$.  This was previously out of reach.

\medskip 

Our next approach for addressing the Cauchy problems is a variational one. To this end, we first introduce some notations. 
\begin{definition} Let $\tau > 0$ and $U$ be a smooth, open subset of $\mR^3$ such that $\bar U \subset D$. A transformation ${\mathcal F}: D \setminus \bar U \to D_{-\tau}$ is said to be a reflection through $\Gamma$ if and only if ${\mathcal F}$ is a diffeomorphism and ${\mathcal F}(x) = x$ on $\Gamma $. 
\end{definition}

Here and in what follows, when we mention a diffeomorphism ${\mathcal F}:  \Omega \to \Omega'$ for two open subsets $\Omega$ and $\Omega'$ of $\mR^3$, we mean that ${\mathcal F}$ is a diffeomorphism, ${\mathcal F} \in C^1 (\bar \Omega)$, and ${\mathcal F}^{-1} \in C^1(\bar \Omega')$.  For a diffeomorphism ${\mathcal F}$ from $\Omega$ to $\Omega'$, for a matrix $A$ defined in $\Omega$, and for a vector field ${\mathcal E}$ defined in $\Omega$,  denote 
\begin{equation*}
{\mathcal F}_* A (x') = \frac{\nabla {\mathcal F}  (x)  A(x) \nabla  {\mathcal F} ^{T}(x)}{\mathcal{J}(x)}
\quad \mbox{ and } \quad {\mathcal F} *  {\mathcal E} (x') = \nabla {\mathcal F}^{-T} (x) {\mathcal E}(x), 
\end{equation*}
with $x ={\mathcal F}^{-1}(x')$ and $\mathcal{J}(x) = \det \nabla  {\mathcal F}(x)$. For 
a vector field $J$ defined in $\Omega$,  we set
\begin{equation*}
{\mathcal T}_* J (x') = \frac{J(x)}{\mathcal{J}(x)}. 
\end{equation*}
In what follows, $d_{\partial \Omega}$ denotes the distance function to $\partial \Omega$ for an open,  bounded subset $\Omega$ of $\mR^3$, i.e., 
$$
d_{\partial \Omega}(x) : = \min\Big\{ |y-x|; y \in \partial \Omega \Big\} \mbox{ for } x \in \mR^3. 
$$

\medskip 
The main result in this direction is 

\begin{theorem}\label{thm2}
Let $0 < \delta < 1$, $\tau > 0$, $J \in [L^2(\mR^3)]^3$ with $\supp J \subset B_{R_0}$, and  
let $(E_\delta, H_\delta) \in [H_{\loc} (\curl, \mR^3)]^2$ be the unique radiating solution of \eqref{Main-eq-delta}. Assume that there exist a smooth open subset $U$ of $\mR^d$ with $ \bar U \subset D $ and a reflection ${\mathcal F}$ through $\Gamma$ from $D \setminus \bar U $ onto $D_{-\tau}$ such that, with the notations 
$$
(\eps, \mu) = (\ep, \mup) \mbox{ in } D_{-\tau} \quad \mbox{ and } \quad (\heps, \hmu) =   ({\mathcal F}_* \en, {\mathcal F}_*\mun) \mbox{ in } D_{-\tau},  
$$
for each connected component $O$ of $D_{-\tau}$, 
\begin{equation*}
(\heps - \eps \ge c d_\Gamma^{\alpha_1} I  \mbox{ in } O \quad \mbox{ or } \quad  \eps - \heps \ge c d_\Gamma^{\alpha_1} I 
 \mbox{ in } O ) 
\end{equation*}
and
$$
(\hmu - \mu \ge c  d_\Gamma^{\alpha_2} I   \mbox{ in } O \quad \mbox{ or } \quad  \mu - \hmu \ge c d_\Gamma^{\alpha_2} I  \mbox{ in } O), 
$$
for some $0 \le \alpha_1, \alpha_2 < 2$ and for some positive constant $c$.  In the case $\alpha_1 + \alpha_2 >  0$ in $O$, additionally  assume  that 
$$
\supp J \cap  \big(O \cup {\mathcal F}^{-1}(O) \big) = \emptyset.
$$ 
Set, in $D_{-\tau}$, 
\begin{equation*}
\hE_\delta = {\mathcal F}*E_\delta \quad \mbox{ and } \quad \hH_\delta = {\mathcal F}*H_\delta  
\end{equation*}
and additionally assume  that $D$ is of class $C^2$.  Then, for all $R> 0$ and for all open set $V$  containing $\Gamma$, 
\begin{multline}\label{T2-1}
\int_{B_R \setminus V} |(E_\delta, H_\delta)|^2 + \int_{D_{-\tau}} |\langle (\eps - \heps) E_\delta, E_\delta \rangle | + |\langle (\mu - \hmu) H_\delta, H_\delta  \rangle | \\[6pt]+ \int_{D_{-\tau}} |(E_\delta - \hE_\delta, H_\delta - \hH_\delta)|^2  \le C \| J\|_{L^2(\mR^3)}^2, 
\end{multline}
for some positive constant $C = C_{R, V}$ independent of $\delta$ and $J$.  Moreover,  for any sequence $(\delta_n)$ converging to $0$, up to a subsequence,  $(E_{\delta_n}, H_{\delta_n})$ converges to $(E_0, H_0)$ strongly in $[L^2_{\loc}(\mR^3 \setminus \Gamma)]^6$ as $n \to + \infty$, where  $(E_0, H_0) \in [H_{\loc}(\curl, \mR^3 \setminus \Gamma)]^2$ satisfies 
\eqref{T2-1}  with $\delta = 0$ and  is  a radiating   solution of \eqref{Main-eq-delta} with $\delta = 0$. In the case $\alpha_1 = \alpha_2 = 0$ for all connected components of $D_{-\tau}$, the convergence holds in $[L^2_{\loc}(\mR^3)]^6$ for $\delta \to 0$,  and the limit $(E_0, H_0) \in [H_{\loc}(\curl, \mR^3)]^2$ is unique. 
\end{theorem}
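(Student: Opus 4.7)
The plan is to recast the problem near $\Gamma$ as a Cauchy problem on the one-sided collar $D_{-\tau}$ via the reflection $\mathcal{F}$, extract coercivity from the sign conditions through an energy identity, handle the degeneracy $d_\Gamma^{\alpha_i}$ by a Hardy-type inequality, and finally upgrade everything to the claimed uniform estimate through a contradiction/compactness argument. Setting $(\hE_\delta, \hH_\delta) := (\mathcal{F}*E_\delta, \mathcal{F}*H_\delta)$ on $D_{-\tau}$, the standard pushforward rules for $\curl$ under a diffeomorphism imply that $(\hE_\delta, \hH_\delta)$ satisfies a Maxwell system on $D_{-\tau}$ with coefficients $(\heps + i\delta \mathcal{F}_* I, \hmu + i\delta \mathcal{F}_* I)$ and source $\mathcal{F}_* J$, which by hypothesis vanishes on $O$ whenever $\alpha_1 + \alpha_2 > 0$. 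Because $\mathcal{F}|_\Gamma = \mathrm{id}$ and $(E_\delta, H_\delta) \in H_{\loc}(\curl, \mR^3)$, the tangential traces agree: $\nu \times \hE_\delta = \nu \times E_\delta$ and $\nu \times \hH_\delta = \nu \times H_\delta$ on $\Gamma$. Thus, on each connected component $O$ of $D_{-\tau}$, the fields $(E_\delta, H_\delta)$ and $(\hE_\delta, \hH_\delta)$ solve two Maxwell systems with coefficients $(\eps, \mu)$ and $(\heps, \hmu)$ sharing matching Cauchy data on $\Gamma$.

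Next I derive the key energy identity by pairing the curl equations for $(E_\delta, H_\delta)$ against $(\bar E_\delta, \bar H_\delta)$, doing the same for $(\hE_\delta, \hH_\delta)$ against $(\bar{\hE}_\delta, \bar{\hH}_\delta)$, integrating over $O$, and subtracting. The Cauchy matching cancels the boundary contributions on $\Gamma$ after integration by parts, leaving only terms on $\partial D_{-\tau} \setminus \Gamma$—controlled by standard exterior Maxwell estimates, the Silver--M\"uller radiation condition and the $C^2$-regularity of $D$—and volume terms involving $J$. The imaginary part of the identity together with the $i\delta$ loss yields $\delta \int_{B_{R_0}}(|E_\delta|^2 + |H_\delta|^2) \le C\|J\|_{L^2}^2$; the real part, expanded through the algebraic decomposition
\begin{equation*}
\langle \eps E, \bar E \rangle - \langle \heps \hE, \bar{\hE} \rangle
= \langle (\eps - \heps) E, \bar E \rangle - \langle \heps (E - \hE), \bar E \rangle - \langle \heps \hE, \overline{E - \hE} \rangle,
\end{equation*}
isolates the sign-definite quadratic forms $\int_O |\langle (\eps - \heps) E_\delta, \bar E_\delta\rangle|$ and $\int_O |\langle (\mu - \hmu) H_\delta, \bar H_\delta\rangle|$ together with cross terms in the Cauchy differences $E_\delta - \hE_\delta$ and $H_\delta - \hH_\delta$, which the sign hypothesis renders coercive on each component.

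The main obstacle is the degeneracy $d_\Gamma^{\alpha_i}$: the identity directly controls only $\int_O d_\Gamma^{\alpha_1}|E_\delta|^2 + d_\Gamma^{\alpha_2}|H_\delta|^2$ and weighted versions of the Cauchy differences. The weighted bound suffices for the $B_R \setminus V$ estimate on $(E_\delta, H_\delta)$—which explains why the theorem asks for $V$ away from $\Gamma$ in the degenerate regime—but to get the unweighted $L^2(D_{-\tau})$ bound on $E_\delta - \hE_\delta$ and $H_\delta - \hH_\delta$ one must recover true $L^2$ control from the weighted control. I do this by noting that the Cauchy differences have vanishing tangential trace on $\Gamma$ and that their curls are controlled in $L^2(O)$ by the two Maxwell systems (using the support condition on $J$), and then applying a Helmholtz-type decomposition combined with a Hardy inequality, which is valid precisely in the range $\alpha_i < 2$ and converts the weighted control into the desired $L^2$ control. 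This is the Maxwell analog of the scalar argument in \cite{Ng-WP}; the complication is that $E$ and $H$ are coupled, so the decomposition must be applied to each field separately while using $\curl E = i\omega\mu H$ and $\curl H = -i\omega\eps E$ (away from $\supp J$) to close the loop between the two.

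Finally, the uniform estimate \eqref{T2-1} is obtained by a contradiction/compactness argument. Assuming a sequence $\delta_n \to 0$ and $\|J_n\|_{L^2} = 1$ along which the left-hand side of \eqref{T2-1} blows up, normalize the solutions and extract weak $L^2_{\loc}(\mR^3 \setminus \Gamma)$ limits; the energy identity forces these limits to be radiating solutions of the homogeneous Maxwell system with coefficients $(\eps_0, \mu_0)$, whose Cauchy differences vanish on each $O$, and a unique-continuation argument combined with the Silver--M\"uller condition forces them to be zero, contradicting the normalization. Passing $\delta \to 0$ in the now uniformly bounded family produces, along a subsequence, a radiating solution $(E_0, H_0)$ of \eqref{Main-eq-delta} with $\delta = 0$ satisfying \eqref{T2-1}. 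In the nondegenerate case $\alpha_1 = \alpha_2 = 0$ the energy identity controls $(E_\delta, H_\delta)$ in $L^2$ on all of $D_{-\tau}$, hence on a full neighborhood of $\Gamma$, so the convergence upgrades to $L^2_{\loc}(\mR^3)$ and uniqueness of $(E_0, H_0)$ follows from the same contradiction scheme applied to the homogeneous limit problem.
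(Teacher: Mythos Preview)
Your overall architecture is right: reflect to a Cauchy problem on $D_{-\tau}$, derive an a priori estimate with degeneracy controlled by $\alpha_i<2$, and close by contradiction/compactness plus unique continuation. The gap is in the middle step, the ``energy identity''.

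If you literally pair the two Maxwell systems against their own conjugates and subtract, what drops out after the boundary cancellation on $\Gamma$ is
\[
\int_O\big(\langle\eps E,\bar E\rangle-\langle\heps\hE,\bar\hE\rangle\big)\;-\;\int_O\big(\langle\mu H,\bar H\rangle-\langle\hmu\hH,\bar\hH\rangle\big)=\text{(controlled)},
\]
i.e.\ the $\eps$--block and the $\mu$--block enter with \emph{opposite} signs. The hypotheses of the theorem allow $(\eps-\heps)$ and $(\mu-\hmu)$ to have the \emph{same} sign on a component $O$, in which case the two sign--definite pieces in your decomposition cancel rather than add, and no coercivity survives. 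Even in the favourable sign combination, your algebraic expansion leaves cross terms of the form $\int_O\langle\heps(E-\hE),\bar E\rangle$ and $\int_O\langle\heps\hE,\overline{E-\hE}\rangle$; these are not dominated by $\int_O|\langle(\eps-\heps)E,\bar E\rangle|$ nor by anything else the identity produces, so the claim that ``the sign hypothesis renders them coercive'' is unsupported. Finally, your Hardy step is circular: you assert that $\nabla\times(E-\hE)$ is controlled in $L^2(O)$, but $\nabla\times(E-\hE)=i\omega(\mu-\hmu)H+i\omega\hmu(H-\hH)+\cdots$, and $(\mu-\hmu)H$ is precisely the quantity you only control with the weight $d_\Gamma^{\alpha_2}$.

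What the paper does instead is to decouple the $\eps$-- and $\mu$--estimates. One does \emph{not} subtract energies; one introduces the divergence--free field $F_E=\Phi(\eps E-\heps\hE)+\cdots$ (so $\dive F_E=0$ and $F_E\cdot\nu=0$ on $\Gamma$) and tests it against gradients $\nabla\xi_k,\nabla\eta_k$ coming from two tailored Poincar\'e lemmas: one (applied to $\varphi_k\hE$) gains the weight $d_\Gamma^\alpha$, the other (applied to $\varphi_k(E-\hE)\in H_0(\curl)$) gains regularity and, via Hardy, the inverse weight $d_\Gamma^{-\alpha}$. The two resulting identities combine, through an elementary quadratic--form inequality, to
\[
\int_O|\langle(\eps-\heps)E,\bar E\rangle|+|E-\hE|^2\;\le\; s\int_O|\mu H-\hmu\hH|^2\;+\;C_s\int_O d_\Gamma^\beta|(E,\hE,H,\hH)|^2+\text{sources},
\]
with an arbitrarily small parameter $s$. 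The companion estimate with $(\eps,\heps,E)\leftrightarrow(\mu,\hmu,H)$ holds by the same argument, and taking $s$ small lets the two absorb each other. This is why the $\eps$-- and $\mu$--signs can be chosen independently. Your contradiction/compactness endgame and the treatment of the nondegenerate case $\alpha_1=\alpha_2=0$ are essentially correct once this estimate is in hand.
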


The meaning of the radiating solution of $(E_0, H_0)$ in Theorem~\ref{thm2} is understood as follows. A pair $(E_0, H_0) \in [H_{\loc}(\curl, \mR^3 \setminus \Gamma)]^2$ satisfies 
\eqref{T2-1}  with $\delta = 0$ and  is  called a radiating   solution of \eqref{Main-eq-delta} with $\delta = 0$ if $(E_{0}, H_{0})$ is a {\it radiating} solution of 
\begin{equation*}
\left\{\begin{array}{cl}
\nabla \times E_0 = i \omega \mu_0 H &  \mbox{ in } \mR^3 \setminus \Gamma, \\[6pt]
\nabla \times H_0 = - i \omega \eps_0 E + J &  \mbox{ in } \mR^3 \setminus \Gamma, 
\end{array} \right.
\end{equation*}
and it satisfies 
\begin{equation}\label{def-radiating2}
(E_0 - \hE_0) \times \nu = 0  = (H_0 - \hH_0) \times \nu \mbox{ on } \Gamma. 
\end{equation}
Note that \eqref{def-radiating2} makes sense since, if $\alpha_1 + \alpha_2 > 0$ in $O$,  
$$
\nabla (E_0 - \hE_0) = i \omega \mu H_0  - i \omega \hmu \hH_0 = i \omega \mu(H_0 - \hH_0) + i \omega (\mu - \hmu) \hH_0 \mbox{ in } O
$$
and
$$
\nabla (H_0 - \hH_0) = i \omega \eps E_0  - i \omega \heps \hE_0 = i \omega \eps(E_0 - \hE_0) + i \omega (\eps - \heps) \hE_0 \mbox{ in } O.  
$$
One can then check,  in both cases  $\alpha_1 + \alpha_2 > 0$ in $O$ or $\alpha_1 + \alpha_2 = 0$ in $O$, that  
\begin{equation*}
E_0 - \hE_0 \in H(\curl, D_{-\tau}) \quad \mbox{ and } \quad H_0 - \hH_0 \in H(\curl, D_{-\tau}) 
\end{equation*}
using \eqref{T2-1}  with $\delta = 0$.

The proof of Theorem \ref{thm2} is given in Section \ref{sect-thm2}. The variational structure is explored in Lemma~\ref{lem-M2}.  Important ingredients in the proof are  two forms of Poincar\'e's lemmas: one  with a gain in the integrability (Lemmas~\ref{lem-extension-2} and \ref{lem-extension-3}) and one with a gain of regularity (Lemma~\ref{lem-curl-2}). We also suitably  use local charts to avoid imposing topological conditions.  Nevertheless, a more global approach in comparison with the proof of Theorem~\ref{thm1} is required in order to remove undesirable terms in the process of using local charts (see Remark~\ref{rem-proof-thm2}). Some applications of Theorem~\ref{thm2} are given in Section~\ref{sect-application}. 

Let  $B_r(x)$ denote the open ball in $\mR^3$ centered at $x$ and of radius $r$ for $x \in \mR^3$ and for $r > 0$. It is shown in \cite[Proposition 2.2]{Ng-CALR-Maxwell} that if $\heps = \eps$ and $\hmu = \mu$ in $B_r(x_0) \cap D_{-\tau}$ for some $x_0 \in \Gamma$ and $r >0$, then resonance may occur. These requirements on  $\eps, \mu$, $\heps$, and $\hmu$ are related to the complementary property of media \cite[Definition 1]{Ng-Superlensing-Maxwell}. This property  plays a role in the construction of lensing and cloaking devices using negative-index materials \cite{Ng-Superlensing-Maxwell, Ng-Cloaking-Maxwell} for which a localized resonance can take place,  an interesting phenomenon of negative-index metamaterials in which the solutions blow up in some regions and remain bounded in others as the loss goes to 0.

The rest of the paper is organized as follows. The proof of Theorems~\ref{thm1}  and \ref{thm2} are given in Sections~\ref{sect-thm1}   and \ref{sect-thm2}, repectively. In the appendix, we present the proof of Proposition~\ref{pro-opt} on the optimality of complementing conditions for the Cauchy problem associated with two Maxwell systems.  

\section{Fourier approach for the Cauchy problems} \label{sect-thm1}

This section is devoted to the proof of Theorem~\ref{thm1}. We first establish several lemmas
in Section~\ref{sect-thm1-1} which are used in the proof of Theorem~\ref{thm1}. The proof of Theorem~\ref{thm1} is given in Section~\ref{sect-thm1-2}. The optimality of the complementing conditions used in Theorem~\ref{thm1} 
is discussed in Proposition~\ref{pro-opt} whose proof is given in the appendix. 
\subsection{Preliminaries} \label{sect-thm1-1}

We begin this section by recall the following known form of Poincar\'e's lemma, see e.g. \cite[Theorem 3.4]{Girault}.

\begin{lemma} \label{lem-curl} Let  $\Omega$ be a  simply connected,  bounded, open subset of $\mR^3$ of class $C^1$. There exists a linear, continuous transformation $T: H(\curl, \Omega) \to [H^1(\Omega)]^3$ such that 
\begin{equation*}
\nabla \times T(u)  = \nabla \times  u \mbox{ in } \Omega. 
\end{equation*}
\end{lemma}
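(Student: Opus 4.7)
The plan is to construct $T(u)$ via a Helmholtz decomposition on $\Omega$, and then invoke elliptic regularity to upgrade the solenoidal part from $L^2$ to $H^1$. Given $u \in H(\curl, \Omega)$, I would first solve the weak Neumann problem
\begin{equation*}
\int_\Omega \nabla \phi \cdot \nabla \psi \, dx = \int_\Omega u \cdot \nabla \psi \, dx \qquad \forall \, \psi \in H^1(\Omega),
\end{equation*}
uniquely for $\phi \in H^1(\Omega)/\mR$ via Lax--Milgram, since the right-hand side is bounded by $\|u\|_{L^2}\|\nabla \psi\|_{L^2}$ and the bilinear form is coercive thanks to the Poincar\'e--Wirtinger inequality. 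Setting $w := u - \nabla \phi$, one has, by construction, $\nabla \times w = \nabla \times u$ in $\Omega$, $\dive w = 0$ distributionally, and $w \cdot \nu = 0$ on $\partial \Omega$ in $H^{-1/2}(\partial \Omega)$; moreover, the map $u \mapsto w$ is linear and bounded from $H(\curl, \Omega)$ to $[L^2(\Omega)]^3$.

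The central step is to show that $w$ actually lies in $[H^1(\Omega)]^3$. The relevant regularity statement is the classical div-curl result: any $v \in [L^2(\Omega)]^3$ with $\nabla \times v, \dive v \in L^2(\Omega)$ and $v \cdot \nu = 0$ on $\partial \Omega$ belongs to $[H^1(\Omega)]^3$, with
\begin{equation*}
\|v\|_{H^1(\Omega)} \le C \bigl( \|v\|_{L^2} + \|\nabla \times v\|_{L^2} + \|\dive v\|_{L^2} \bigr),
\end{equation*}
provided $\partial \Omega$ is $C^1$ and $\Omega$ is simply connected. I would prove this by flattening $\partial \Omega$ via local charts, reducing (after a cut-off) to the half-space, and exploiting the identity $-\Delta = \nabla \times \nabla \times \, - \, \nabla \dive$ to set up a componentwise Poisson problem for $v$; in the interior, the same identity yields interior regularity directly. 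Defining $T(u) := w$, linearity and continuity follow immediately, and $\nabla \times T(u) = \nabla \times u$ holds by construction.

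The principal obstacle is exactly this regularity bootstrap: passing from div/curl control plus a single scalar trace to full $[H^1]^3$ regularity is delicate and requires both the simple-connectedness of $\Omega$ (ruling out nontrivial harmonic Neumann vector fields invisible to div and curl, which would prevent a well-defined decomposition) and the $C^1$ regularity of $\partial \Omega$, as less regular boundaries are known to break the estimate. This is the content of the cited Girault--Raviart Theorem 3.4, whose detailed verification in local coordinates is where all the real work sits; once granted, all remaining assertions follow formally from the Helmholtz decomposition and standard continuity estimates.
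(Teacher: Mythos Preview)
Your approach is correct and is one of the standard routes to this result. The paper does not actually prove this lemma: it is stated as a known form of Poincar\'e's lemma and referred directly to \cite[Theorem 3.4]{Girault}. So there is no ``paper's own proof'' to compare against beyond that citation.

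For what it is worth, the Girault--Raviart argument is organized slightly differently from yours: rather than decomposing $u$ itself and subtracting a gradient, one applies the vector-potential theorem directly to the divergence-free field $\nabla \times u \in [L^2(\Omega)]^3$, producing $\psi \in [H^1(\Omega)]^3$ with $\nabla \times \psi = \nabla \times u$ and $\dive \psi = 0$; the $H^1$ regularity of $\psi$ is then obtained by extending the data to a ball and invoking interior regularity there, rather than by a boundary div--curl estimate. Your route via the Neumann projection and the normal-trace div--curl lemma is equally valid and perhaps more transparent, but you are right that the delicate point is the boundary regularity step, and for merely $C^1$ domains this needs care (the classical references typically assume $C^{1,1}$ for a clean boundary argument, which is one reason the extension-to-a-ball trick is attractive).
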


We next establish  the key ingredient of the proof of Theorem~\ref{thm1}. 

\begin{lemma}\label{lem-compact-1-*}  Let  $\Omega$ be a  simply connected,  bounded, open subset of $\mR^3$ of class $C^1$.  Let $\eps, \, \heps, \, \mu,  \,\hmu$ be real,  symmetric,   uniformly elliptic matrix-valued functions
defined  in $\Omega$ and  of class $C^1$.  Let $J_{e}, J_{m},  \hJ_{e}, \hJ_{m}  \in [L^2(\Omega)]^3$  and let $(E, H), (\hE, \hH) \in [H(\curl, \Omega)]^2$ be such that 
\begin{equation*}
\left\{ \begin{array}{cl}
\nabla \times E = i \omega \mu H + J_{e} & \mbox{ in } \Omega, \\[6pt]
\nabla \times H = - i \omega \eps E + J_{m}  & \mbox{ in } \Omega, 
\end{array}\right. \quad \quad 
\left\{ \begin{array}{cl}
\nabla \times \hE = i \omega \hmu \hH + \hJ_{e} & \mbox{ in } \Omega, \\[6pt]
\nabla \times \hH = - i \omega \heps \hE + \hJ_{m} & \mbox{ in } \Omega, 
\end{array}\right. 
\end{equation*}
and 
\begin{equation*}
E \times \nu = \hE \times \nu  \quad \mbox{ and } \quad H \times \nu = \hH \times \nu  \mbox{ on } \partial \Omega. 
\end{equation*}
Set 
\begin{equation*}
\bE = T(E), \quad \hbE = T(\hE), \quad \bH = T(H), \quad \hbH = T(\hH) \quad \mbox{ in } \Omega,    
\end{equation*}
where $T$ is the operator given in Lemma~\ref{lem-curl}.  We have 
\begin{enumerate}
\item[1)] If   $(\eps, \heps)$ satisfies  the complementing condition with respect to  $\nu(x)$  for all $x \in \partial \Omega$, then 
\begin{multline}\label{lem-compact-1-E-2}
\| (E - \bE, \hE - \hbE) \|_{L^2(\Omega)} \\[6pt]
 \le C \Big(  \|(\bE, \hbE, J_m, \hJ_m)\|_{L^2(\Omega)} +  \|\bE - \hbE \|_{H^{-1/2}(\partial \Omega)}+ \| (E - \bE, \hE - \hbE) \|_{[H^{1}(\Omega)]^*}  \Big). 
\end{multline}

\item[2)] If   $(\mu, \hmu)$ satisfies  the complementing condition with respect to  $\nu(x)$  for all $x \in \partial \Omega$,  then 
\begin{multline}\label{lem-compact-1-H-2}
\| (H - \bH, \hH - \hbH) \|_{L^2( \Omega)} \\[6pt]
\le C \Big( \|(\bH, \hbH, J_e, \hJ_e)\|_{L^2(\Omega)} +  \|\bH - \hbH \|_{H^{-1/2}(\partial \Omega)} + \| (H - \bH, \hH - \hbH) \|_{[H^{1}(\Omega)]^*} \Big). 
\end{multline}
\end{enumerate}
Here $C$ denotes a positive constant depending only on $\eps$, $\mu$, and $\Omega$. 
\end{lemma}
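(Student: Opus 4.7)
Since parts (1) and (2) are related by the symmetry $E\leftrightarrow H$, $\eps\leftrightarrow\mu$, $J_m\leftrightarrow-J_e$, I will argue only (1). The plan is to reduce the Maxwell Cauchy system to an elliptic Cauchy problem for two scalar potentials and then invoke the Agmon--Douglis--Nirenberg a priori estimate under the complementing hypothesis. Since $\nabla\times\bE=\nabla\times E$ by the defining property of $T$ (Lemma~\ref{lem-curl}), the field $E-\bE$ is curl-free on the simply connected $C^1$ domain $\Omega$, so Poincar\'e's lemma yields $\varphi\in H^1(\Omega)$ with $E-\bE=\nabla\varphi$, and likewise $\hvarphi\in H^1(\Omega)$ with $\hE-\hbE=\nabla\hvarphi$; I normalize the constants so that $\int_\Omega\varphi=0$ and $\int_{\partial\Omega}(\varphi-\hvarphi)=0$.

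The next step is to transcribe the Maxwell system as a transmission problem for $(\varphi,\hvarphi)$ on $\Omega$. Taking the divergence of $\nabla\times H=-i\omega\eps E+J_m$ yields $\dive(\eps E)=(i\omega)^{-1}\dive J_m$ in $\mathcal{D}'(\Omega)$, and substituting $E=\nabla\varphi+\bE$ produces
\begin{equation*}
\dive(\eps\nabla\varphi)=(i\omega)^{-1}\dive J_m-\dive(\eps\bE)=:f,\qquad\|f\|_{H^{-1}(\Omega)}\le C\bigl(\|J_m\|_{L^2}+\|\bE\|_{L^2}\bigr),
\end{equation*}
and analogously $\dive(\heps\nabla\hvarphi)=\hat f$ with $\|\hat f\|_{H^{-1}}\le C(\|\hJ_m\|_{L^2}+\|\hbE\|_{L^2})$. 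The tangential matching $E\times\nu=\hE\times\nu$ on $\partial\Omega$ gives $\nabla G\times\nu=-(\bE-\hbE)\times\nu$ where $G:=\varphi-\hvarphi$, and Poincar\'e on $\partial\Omega$ converts this to $\|G\|_{H^{1/2}(\partial\Omega)}\le C\|\bE-\hbE\|_{H^{-1/2}(\partial\Omega)}$. The matching $H\times\nu=\hH\times\nu$ implies $\nu\cdot\nabla\times(H-\hH)=0$ on $\partial\Omega$; since $\nabla\times(H-\hH)=-i\omega(\eps E-\heps\hE)+(J_m-\hJ_m)$ is a divergence-free $L^2$ vector field, it has a well-defined $H^{-1/2}$ normal trace, yielding the co-normal jump
\begin{equation*}
\eps\nabla\varphi\cdot\nu-\heps\nabla\hvarphi\cdot\nu=h\text{ in }H^{-1/2}(\partial\Omega),\qquad\|h\|_{H^{-1/2}(\partial\Omega)}\le C\bigl(\|\bE\|_{L^2}+\|\hbE\|_{L^2}+\|J_m\|_{L^2}+\|\hJ_m\|_{L^2}\bigr).
\end{equation*}

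Now the complementing hypothesis for $(\eps,\heps)$ at every point of $\partial\Omega$ permits me to apply the Agmon--Douglis--Nirenberg a priori estimate \cite{ADNII}, after localization near $\partial\Omega$, coefficient freezing, and interior testing, to obtain
\begin{equation*}
\|\nabla\varphi\|_{L^2}+\|\nabla\hvarphi\|_{L^2}\le C\bigl(\|f\|_{H^{-1}}+\|\hat f\|_{H^{-1}}+\|G\|_{H^{1/2}(\partial\Omega)}+\|h\|_{H^{-1/2}(\partial\Omega)}+\|\varphi\|_{L^2}+\|\hvarphi\|_{L^2}\bigr).
\end{equation*}
Substituting the Step-2 bounds handles every term except the $L^2$ norms of $\varphi,\hvarphi$. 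To upgrade these to $[H^1(\Omega)]^*$ norms of the gradients, I use the Bogovski inverse of the divergence on mean-zero functions: there exists $\Psi\in[H^1_0(\Omega)]^3$ with $\dive\Psi=\varphi$ and $\|\Psi\|_{H^1_0}\le C\|\varphi\|_{L^2}$, whence
\begin{equation*}
\|\varphi\|_{L^2}^2=\int_\Omega\varphi\,\dive\Psi=-\int_\Omega\nabla\varphi\cdot\Psi\le\|\nabla\varphi\|_{[H^1_0(\Omega)]^*}\|\Psi\|_{H^1_0},
\end{equation*}
so $\|\varphi\|_{L^2}\le C\|E-\bE\|_{[H^1(\Omega)]^*}$, and analogously for $\hvarphi$. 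Combining all of the above yields \eqref{lem-compact-1-E-2}, and part (2) follows by the initial symmetry.

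The main technical obstacle is the rigorous derivation of the co-normal transmission condition: since $J_m,\hJ_m$ lie only in $[L^2(\Omega)]^3$, neither current possesses a normal trace on $\partial\Omega$ in isolation, and the products $\eps E,\heps\hE$ similarly lack individual $L^2$-divergence regularity. The key observation resolving this is that the composite quantity $\nabla\times(H-\hH)$ is a divergence-free $L^2$ vector field whose normal trace on $\partial\Omega$ vanishes (because $H-\hH$ has zero tangential trace), which is precisely what permits defining $h$ as an element of $H^{-1/2}(\partial\Omega)$ with bounds involving only $L^2$ norms of the data. With the Cauchy problem correctly framed, the remainder is a fairly standard ADN estimate followed by the Bogovski duality upgrade.
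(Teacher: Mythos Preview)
Your proof follows essentially the same strategy as the paper: write $E-\bE=\nabla\varphi$ and $\hE-\hbE=\nabla\hvarphi$ via Poincar\'e's lemma, derive the scalar Cauchy problem for $(\varphi,\hvarphi)$ with the Dirichlet datum coming from $(\bE-\hbE)\times\nu$ and the co-normal datum extracted from $\nu\cdot\nabla\times(H-\hH)=0$, apply the ADN-type a priori estimate under the complementing hypothesis, and finish by bounding $\|\varphi\|_{L^2}$ in terms of $\|\nabla\varphi\|_{[H^1]^*}$. The paper's argument is the same in substance; it normalizes \emph{both} potentials to have zero mean in $\Omega$, feeds the tangential-gradient boundary condition directly into the cited estimate (rather than converting it first to a bound on $\|G\|_{H^{1/2}(\partial\Omega)}$), and for the last step uses the Neumann problem $-\Delta\psi=\xi-\xi_\Omega$, $\partial_\nu\psi=0$ in place of your Bogovski operator. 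These are cosmetic differences.

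One small wrinkle worth smoothing out: your mixed normalization ($\int_\Omega\varphi=0$ but $\int_{\partial\Omega}(\varphi-\hvarphi)=0$) is chosen to make the boundary Poincar\'e inequality work for $G=\varphi-\hvarphi$, but it leaves $\hvarphi$ without zero mean in $\Omega$, so your Bogovski step does not apply to $\hvarphi$ as written. Normalizing both to zero mean in $\Omega$, as the paper does, fixes this at the cost of carrying an innocuous additive constant in $G$ through the Dirichlet datum (which is absorbed since only $\nabla\varphi,\nabla\hvarphi$ matter).
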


Here and in what follows,  $[H^{1}(\Omega)]^*$ denotes the dual space of $H^1(\Omega)$.

\begin{proof} We only  prove  assertion $1)$.  Assertion $2)$ can be obtained similarly and its proof is omitted.  From the properties of $T$, we have 
$$
\nabla \times (E - \bE) = \nabla \times (\hE - \hbE) = 0 \quad  \mbox{ in } \Omega. 
$$
Since $\Omega$ is simply connected, there exists $\varphi, \, \hvarphi \in H^1(\Omega)$ such that 
\begin{equation}\label{lem-compact-choice-2}
E - \bE = \nabla \varphi,  \quad \hE - \hbE = \nabla \hvarphi \quad  \mbox{ in } \Omega, \quad \mbox{ and } \quad  
\int_{ \Omega} \varphi = \int_{ \Omega} \hvarphi = 0. 
\end{equation}
We have, by the second equation of the system of $(E, H)$,  
\begin{equation}\label{lem-compact-p2-a}
\dive (\eps \nabla \varphi )  =  \dive (\eps E)  - \dive (\eps \bE)  = 
- \dive (\eps \bE) - \frac{i}{\omega} \dive (J_{m})  \mbox{ in } \Omega, 
\end{equation}
by the second equation of the system of $(\hE, \hH)$,
\begin{equation}\label{lem-compact-p2-b}
\dive (\heps \nabla \hvarphi)  = \dive (\heps \hE) -  \dive (\heps \hbE)= -  \dive (\heps \hbE)- \frac{i}{\omega} \dive (\hJ_{m}) \mbox{ in } \Omega, 
\end{equation}
\begin{equation}\label{lem-compact-p2-1}
(\nabla \varphi - \nabla \hvarphi) \times \nu = (E - \bE) \times \nu -  (\hE - \hbE) \times \nu = (\hbE - \bE) \times \nu \mbox{ on } \partial \Omega,
\end{equation}
and, on $\partial \Omega$,  
\begin{multline}\label{lem-compact-p3}
\Big(\eps \nabla \varphi   + \eps \bE +  \frac{i}{\omega} J_m \Big)  \cdot \nu - \Big( \heps \nabla \hvarphi   + \heps \hbE + \frac{i}{\omega} \hJ_m \Big) \cdot \nu  \\[6pt]
= \Big( \eps E  + \frac{i}{\omega} J_m  \Big) \cdot \nu - \Big(  \heps \hE  +  \frac{i}{\omega} \hJ_m  \Big) \cdot \nu \\[6pt] 
 = - \frac{1}{i \omega} \nabla \times (H - \hH) \cdot \nu  =  \frac{1}{i \omega} \dive_\Gamma [(H - \hH) \times \nu] =0. 
\end{multline}
Since $(\eps, \heps)$ satisfies the complementing condition, applying  \cite[Lemma 18]{NguyenNguyen_transmissioneigenvalue} (with $\lambda = 1$, $\Sigma_1 = 1$, $\Sigma_2 > 0$ sufficiently large)  and using  the standard arguments of  freezing coefficients, we derive from \eqref{lem-compact-p2-a}, \eqref{lem-compact-p2-b}, 
\eqref{lem-compact-p2-1},  and \eqref{lem-compact-p3}  that  
\begin{equation}\label{lem-compact-coucou}
\| (\nabla \varphi, \nabla  \hvarphi) \|_{L^2( \Omega)} \le C \Big(  \| (\bE, \hbE, J_m, \hJ_m) \|_{L^2(\Omega)} + \|\bE - \hbE \|_{H^{-1/2}(\partial \Omega)}+ \| (\varphi, \hvarphi) \|_{L^2(\Omega)} \Big).  
\end{equation} 

We claim that,  for $u \in H^1(\Omega)$ with $\int_{\Omega} u = 0$,  
\begin{equation*}
\| u \|_{L^2(\Omega)} \le C \| \nabla u \|_{[H^1(\Omega)]^*}. 
\end{equation*}
In fact, fix $\xi \in L^2(\Omega)$ arbitrary and set 
$ \xi_\Omega : = \frac{1}{|\Omega|}\int_{\Omega} \xi $. 
Let $\varphi \in H^1(\Omega)$ be the unique solution of  $- \Delta \varphi  = \xi - \xi_{\Omega}$ in $\Omega$ and $\partial_\nu \varphi = 0$ on $\partial \Omega$ with $\int_{\Omega} \varphi =0$. We have 
$$
\| \nabla \varphi\|_{L^2(\Omega)} \le C \| \xi \|_{L^2(\Omega)}
$$
and
$$
\left| \int_{\Omega} u \xi \right| = \left| \int_{\Omega} u ( \xi - \xi_\Omega) \right|  = \left| \int_{\Omega} \nabla u \nabla \varphi \right| \le \| \nabla u \|_{L^2(\Omega)}\| \nabla \varphi \|_{L^2(\Omega)},
$$
and the claim follows. 

Applying the claim, we have 
\begin{equation*}
\| (\varphi, \hvarphi) \|_{L^2(\Omega)} \le C \| ( \nabla \varphi, \nabla \hvarphi) \|_{[H^{1}(\Omega)]^*},  
\end{equation*}
and assertion $1)$ follows from \eqref{lem-compact-coucou} and the choice of $\varphi$ and $\hvarphi$ in \eqref{lem-compact-choice-2}.  \end{proof}

\begin{remark} \rm In the proof of Lemma~\ref{lem-compact-1-*}, we used \cite[Lemma 18]{NguyenNguyen_transmissioneigenvalue}. This lemma is in the spirit of the results given in   \cite{ADNII} due to Agmon, Douglis, and Nirenberg where more regular data are used. 
\end{remark}

We next establish a compactness result used in Lemma~\ref{lem-compact-1} which is a key ingredient of the proof of Theorem~\ref{thm1}. 

\begin{lemma} \label{lem-compact-pre}   Let $d \ge 2$,  and let $O \subset \mR^d$ be open, bounded,  and of class $C^1$.  Let $(u_n)$ be a bounded sequence in $L^2(O)$. Then, up to a subsequence, $(u_n)$ converges in $[H^1(O)]^*$.  
\end{lemma}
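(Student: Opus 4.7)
The plan is to deduce this from the compactness of the Rellich--Kondrachov embedding $H^1(O) \hookrightarrow L^2(O)$ via a standard duality argument. Identifying $L^2(O)$ with a subspace of $[H^1(O)]^*$ through the pairing $u \mapsto \bigl(\varphi \mapsto \int_O u \bar\varphi\bigr)$, the inclusion map $\iota : L^2(O) \to [H^1(O)]^*$ is exactly the adjoint of the compact inclusion $j : H^1(O) \to L^2(O)$. By Schauder's theorem, the adjoint of a compact operator between Banach spaces is compact, so $\iota$ is compact. A bounded sequence in $L^2(O)$ is therefore relatively compact in $[H^1(O)]^*$, which yields the claim.

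Concretely, I would argue as follows. First, since $O$ is open, bounded, and of class $C^1$, the Rellich--Kondrachov theorem provides compactness of $j : H^1(O) \to L^2(O)$. Next, extract a subsequence (still denoted $(u_n)$) such that $u_n \weakconverge u$ weakly in $L^2(O)$, which is possible because $(u_n)$ is bounded in the reflexive space $L^2(O)$. It remains to upgrade this to norm convergence in $[H^1(O)]^*$, i.e., to show that
\begin{equation*}
\sup_{\|\varphi\|_{H^1(O)} \le 1} \left| \int_O (u_n - u)\, \bar\varphi \right| \longrightarrow 0 \quad \text{as } n \to \infty.
\end{equation*}

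To establish this supremum convergence, I would assume for contradiction that there exist $\eps_0 > 0$ and $\varphi_n \in H^1(O)$ with $\|\varphi_n\|_{H^1(O)} \le 1$ such that $|\int_O (u_n - u) \bar\varphi_n| \ge \eps_0$. By Rellich--Kondrachov, a subsequence $\varphi_{n_k}$ converges strongly in $L^2(O)$ to some $\varphi \in L^2(O)$. Splitting
\begin{equation*}
\int_O (u_{n_k} - u)\bar\varphi_{n_k} = \int_O (u_{n_k} - u)\overline{(\varphi_{n_k} - \varphi)} + \int_O (u_{n_k} - u) \bar\varphi,
\end{equation*}
the first term is bounded by $\|u_{n_k} - u\|_{L^2(O)} \|\varphi_{n_k} - \varphi\|_{L^2(O)}$ and tends to $0$ by the strong $L^2$ convergence of $\varphi_{n_k}$ together with the uniform $L^2$ bound on $u_n - u$, while the second tends to $0$ by the weak $L^2$ convergence $u_{n_k} \weakconverge u$. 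This contradicts the lower bound $\eps_0$, completing the proof.

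No real obstacle is expected: the argument is a clean application of Rellich--Kondrachov and weak--strong pairing. The only point to handle carefully is to ensure that the test functions used in the definition of the $[H^1(O)]^*$ norm indeed form a relatively compact family in $L^2(O)$, which is precisely what Rellich--Kondrachov supplies under the $C^1$ regularity assumption on $\partial O$.
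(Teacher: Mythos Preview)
Your argument is correct and is the standard functional-analytic route: the embedding $L^2(O)\hookrightarrow [H^1(O)]^*$ is the adjoint of the Rellich--Kondrachov embedding $H^1(O)\hookrightarrow L^2(O)$, hence compact by Schauder's theorem, and your weak--strong pairing argument spells this out explicitly. The contradiction step is fine once you note that the negation of the supremum convergence yields a \emph{subsequence} along which the supremum stays bounded below, and you then extract a further subsequence via Rellich; your phrasing glosses over this slightly but the idea is clearly sound.

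The paper takes a genuinely different, more hands-on route. After reducing to $v_n := u_n - u \rightharpoonup 0$, it solves the Dirichlet problem $-\Delta\varphi_n = v_n$ in $O$, $\varphi_n\in H^1_0(O)$, observes that $\int_O|\nabla\varphi_n|^2 = \int_O v_n\varphi_n \to 0$ (since $\varphi_n$ converges in $L^2$ and $v_n\rightharpoonup 0$), and then bounds $\|v_n\|_{[H^1(O)]^*}$ via integration by parts as $\|\nabla\varphi_n\|_{L^2} + \|\partial_\nu\varphi_n\|_{H^{-1/2}(\partial O)}$. The normal-trace term is controlled by a sharpened trace inequality for $H(\dive,O)$ (Lemma~\ref{lem-trace} in the paper), which gives $\|\partial_\nu\varphi_n\|_{H^{-1/2}}^2 \le C\|\nabla\varphi_n\|_{L^2}\big(\|\nabla\varphi_n\|_{L^2}+\|v_n\|_{L^2}\big)\to 0$. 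Your approach is shorter and avoids this auxiliary PDE and the trace lemma entirely; the paper's approach is more constructive and serves to illustrate their trace estimate, which they flag as of independent interest. Both are valid; yours is the more economical proof of the compactness statement itself.
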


\begin{proof} Without loss of generality, one may assume that $(u_n)$ converges weakly to $u$ in $L^2(O)$. It suffices to prove that, up to a subsequence,   $(v_n)$ converges to $0$ in $[H^1(O)]^*$ where $v_n :  = u_n - u$ in $O$. Let  $\varphi_n \in H^1_0(O)$ be the unique solution of $-\Delta \varphi_n = v_n$ in $O$. Since $(v_n)$
is bounded in $L^2(O)$, it follows that $(\varphi_n)$ is bounded in $H^1(O)$. Without loss of generality, one may assume that $(\varphi_n)$ converges in $L^2(O)$. 
Then 
\begin{equation}\label{lem-compact-pre-1}
\int_{O} |\nabla \varphi_n|^2 = \int_{O} v_n \varphi_n \to 0 \mbox{ as } n \to + \infty. 
\end{equation}
We have, for   $\xi \in H^1(O)$, 
\begin{equation*}
\int_{O} v_n \xi = -  \int_{O} \Delta \varphi_n \xi = \int_{O} \nabla \varphi_n \nabla \xi - \int_{\partial O} \partial_\nu \varphi_n \xi. 
\end{equation*}
It follows that 
\begin{equation}\label{lem-compact-pre-2}
\| v_n \|_{[H^1(O)]^*} \le C \Big(  \| \nabla \varphi_n \|_{L^2(O)} + \| \partial_\nu \varphi_n \|_{H^{-1/2}(\partial O)} \Big). 
\end{equation}
Here and in what follows in the proof, $C$ denotes a positive constant depending only on $O$. 
Applying Lemma~\ref{lem-trace} below, we have 
\begin{equation*}
\| \partial_\nu \varphi_n \|_{H^{-1/2}(\partial O)}^2 \le C \| \nabla \varphi_n \|_{L^2(O)} \Big( \| \nabla \varphi_n \|_{L^2(O)} +  \| v_n\|_{L^2(O)} \Big) \mathop{\to}^{\eqref{lem-compact-pre-1}} 0 \mbox{ as } n \to + \infty. 
\end{equation*}
The conclusion now follows from \eqref{lem-compact-pre-1} and \eqref{lem-compact-pre-2}. 
\end{proof}

In the proof of Lemma~\ref{lem-compact-pre}, we use the following result which is interesting in itself. 

\begin{lemma}\label{lem-trace} Let $d \ge 2$,  and let $O \subset \mR^d$ be open, bounded,  and of class $C^1$. Let 
$u \in H(\dive, O)$. We have 
\begin{equation}\label{lem-trace-statement}
\| u \cdot \nu\|_{H^{-1/2}(\partial O)}^2 \le C \| u \|_{L^2(O)} \Big(\| u  \|_{L^2(O)} + \|\dive u\|_{L^2(O)} \Big), 
\end{equation}
for some positive constant $C$ depending only on $O$. 
\end{lemma}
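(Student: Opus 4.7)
The plan is to prove the estimate by duality, using an adaptively localized test function. By density, reduce to $u \in [C^{1}(\bar O)]^d$ and set $A := \|u\|_{L^2(O)}$ and $B := \|\dive u\|_{L^2(O)}$; if $A$ or $B$ vanishes, the claim is trivial or follows directly from the classical $H(\dive)$-trace bound, so assume $A, B > 0$. For any $\varphi \in H^{1/2}(\partial O)$ and any $\Phi \in H^1(O)$ with $\Phi|_{\partial O}=\varphi$, the divergence theorem yields the identity
\begin{equation*}
\langle u \cdot \nu, \varphi \rangle \;=\; \int_O u\cdot \nabla \Phi\, dx + \int_O \Phi\, \dive u\, dx,
\end{equation*}
so that $|\langle u\cdot \nu,\varphi \rangle|^2 \le 2A^2\|\nabla \Phi\|_{L^2(O)}^2 + 2B^2\|\Phi\|_{L^2(O)}^2$. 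The standard choice (any $H^1$-bounded extension of $\varphi$) recovers only the classical estimate $C(A+B)^2\|\varphi\|_{H^{1/2}(\partial O)}^2$, which is strictly weaker than what is being claimed; the whole point will be to pick $\Phi$ so that $\|\Phi\|_{L^2}$ is small at the expense of $\|\nabla \Phi\|_{L^2}$, and to adapt the trade-off to the ratio $A/B$.

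The construction will be $\Phi = \chi_\eta \Psi$, where $\Psi$ is a fixed standard extension with $\|\Psi\|_{H^1(O)} \le C\|\varphi\|_{H^{1/2}(\partial O)}$ and $\chi_\eta$ is a Lipschitz cutoff equal to $1$ on $\partial O$, supported in the strip $\{d(x,\partial O) < \eta\}$, with $|\nabla \chi_\eta| \le C/\eta$, for a parameter $\eta \in (0,\eta_0]$ to be chosen. A one-dimensional slicing estimate in tubular coordinates $(x',s)$ near $\partial O$, based on $|\Psi(x',s)|^2 \le 2|\varphi(x')|^2 + 2s\int_0^s |\partial_s \Psi(x',\tau)|^2\, d\tau$ and integration in $s \in (0,\eta)$ and $x' \in \partial O$, will give $\|\Psi\|_{L^2(\{d<\eta\})}^2 \le C\eta\|\varphi\|_{L^2(\partial O)}^2 + C\eta^2\|\nabla \Psi\|_{L^2(O)}^2 \le C\eta\|\varphi\|_{H^{1/2}(\partial O)}^2$ for $\eta \le 1$. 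From this,
\begin{equation*}
\|\Phi\|_{L^2(O)}^2 \le C\eta\|\varphi\|_{H^{1/2}}^2, \qquad \|\nabla \Phi\|_{L^2(O)}^2 \le 2\|\nabla \Psi\|_{L^2}^2 + 2\|\nabla \chi_\eta\|_{L^\infty}^2\|\Psi\|_{L^2(\{d<\eta\})}^2 \le C\eta^{-1}\|\varphi\|_{H^{1/2}}^2.
\end{equation*}

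Plugging these back in produces $|\langle u\cdot \nu,\varphi \rangle|^2 \le C(A^2\eta^{-1} + B^2\eta)\|\varphi\|_{H^{1/2}}^2$, which one optimizes in $\eta$: the choice $\eta = A/B$ (available when $A/B \le \eta_0$) balances the two contributions and delivers $CAB\|\varphi\|_{H^{1/2}}^2$; in the opposite regime, $\eta = \eta_0$ delivers $C(A^2 + B^2)\|\varphi\|_{H^{1/2}}^2$, which is also $\le CA(A+B)\|\varphi\|_{H^{1/2}}^2$ because there $B \lesssim A$. Taking the supremum over $\|\varphi\|_{H^{1/2}(\partial O)} \le 1$ then yields Lemma~\ref{lem-trace}. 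The main technical obstacle will be the tubular-neighborhood estimate $\|\Psi\|_{L^2(\{d < \eta\})}^2 \le C\eta\|\varphi\|_{H^{1/2}}^2$ uniformly in small $\eta$ on a domain that is only $C^1$: this requires a finite atlas of $C^1$ charts flattening $\partial O$, together with uniform control of the Jacobians of the chart maps, so that the one-dimensional slicing argument applies chart-by-chart and can be reassembled via a partition of unity without loss in the $\eta$-scaling.
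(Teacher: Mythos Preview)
Your argument is correct and takes a genuinely different route from the paper's. The paper reduces to the half-space, takes the tangential Fourier transform, writes $|\mathcal F u_d(\xi',0)|^2$ as minus the integral of $\partial_{x_d}|\mathcal F u_d(\xi',x_d)|^2$, applies H\"older with the weight $(1+|\xi'|^2)^{-1/2}$ split between the two factors, and then uses $\partial_{x_d} u_d = \dive u - \sum_{j<d}\partial_{x_j}u_j$ to control the term carrying the weight $(1+|\xi'|^2)^{-1}$ by $\|u\|_{L^2}+\|\dive u\|_{L^2}$. This is a short Fourier-side interpolation argument.

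Your proof is a real-variable duality argument: you build, for each $\varphi\in H^{1/2}(\partial O)$, an extension $\Phi=\chi_\eta\Psi$ supported in a strip of width $\eta$ with $\|\Phi\|_{L^2}^2\lesssim\eta$ and $\|\nabla\Phi\|_{L^2}^2\lesssim\eta^{-1}$, and optimize $\eta=A/B$. The key tubular estimate $\|\Psi\|_{L^2(\{d<\eta\})}^2\le C\eta\|\varphi\|_{H^{1/2}}^2$ is obtained by one-dimensional slicing in local $C^1$ charts, which is routine. The two approaches are in some sense dual realizations of the same interpolation fact (the paper works on the Fourier side of the trace, you work on the physical side of the test function); yours has the advantage of avoiding the Fourier transform entirely and hence transplants more directly to manifolds or variable-coefficient settings, while the paper's is shorter once the half-space reduction is in place.
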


\begin{remark}  \rm Lemma~\ref{lem-trace} is in the spirit of \cite[Lemma A.1]{HJNg-2}. Estimate \eqref{lem-trace-statement} is stronger than the standard  one which asserts that 
$$
\| u \cdot \nu\|_{H^{-1/2}(\partial O)} \le C  \Big(\| u  \|_{L^2(O)} + \|\dive u\|_{L^2(O)} \Big). 
$$
\end{remark}

\begin{proof} Using a density argument, one may assume that $u$ is smooth. Using local charts, it suffices to prove that 
\begin{equation}\label{lem-trace-p0}
\| u \cdot \nu\|_{H^{-1/2}(\partial \mR^{d}_+)}^2 \le C \| u \|_{L^2(\mR^d_+)} \big(\| u \|_{L^2(\mR^d_+)}  + \|\dive u\|_{L^2(\mR^d_+)}  \big), 
\end{equation}
for $u \in C^1(\mR^d)$ with support in the unit ball, where $\mR^d_+ : = \mR^{d-1} \times (0, + \infty)$. 
Define, for $(\xi', x_d) \in \mR^{d-1} \times \mR$,  
$$
{\mathcal F} v (\xi', x_d): = \int_{\mR^{d-1}} v(x', x_d) e^{- i x' \cdot \xi'} \, d x' \mbox{ for } v \in C^1_{c}(\mR^d). 
$$
We have 
\begin{multline*}
\| u \cdot \nu\|_{H^{-1/2}(\partial \mR^{d}_+)}^2  = \int_{\mR^{d-1}} | {\mathcal F} u_d (\xi', 0)|^2 \big(1 + |\xi'|^2 \big)^{-1/2} \, d \xi' \\[6pt]
 \le \frac{1}{2} \int_0^\infty \int_{\mR^{d-1}} |\partial_{x_d}{\mathcal F} u_d (\xi', x_d)|  \,  |{\mathcal F} u_d (\xi', x_d)| \big(1 + |\xi'|^2 \big)^{-1/2} \, d \xi'  \, d x_d. 
 \end{multline*}
Using H\"older's inequality, we obtain 
\begin{equation}\label{lem-trace-p1}
\| u \cdot \nu\|_{H^{-1/2}(\partial \mR^{d}_+)}^2  \le  \frac{1}{2}  \left( \int_{\mR^{d}_+} |\partial_{x_d} {\mathcal F}  u_d (\xi', 0)|^2  \big(1 + |\xi'|^2\big)^{-1} \, d \xi' d x_d \right)^{1/2} \left( \int_{\mR^{d}_+} |{\mathcal F}u_d (\xi', 0)|^2 d \xi' d x_d \right)^{1/2}.
\end{equation}
Since 
$$
|\partial_{x_d} {\mathcal F}  u_d (\xi', x_d)|= \big|{\mathcal F}( \dive u) (\xi', x_d )- \sum_{j=1}^{d-1} i \xi'_j {\mathcal F} u_j(\xi', x_d ) \big| \le |{\mathcal F}( \dive u) (\xi', x_d )| +  \sum_{j=1}^{d-1} |\xi'_j {\mathcal F} u_j(\xi', x_d )|, 
$$
it follows from Parseval's theorem that 
\begin{equation}\label{lem-trace-p2}
\int_{\mR^{d}_+} |\partial_{x_d} {\mathcal F}  u_d (\xi', x_d )|^2  \big(1 + |\xi'|^2 \big)^{-1} \, d \xi' d x_d \le C \int_{\mR^{d}_+} \big( |\dive u (x)|^2 + |u (x)|^2\big) \, dx.  
\end{equation}
By Parseval's theorem, we also have
\begin{equation}\label{lem-trace-p3}
 \int_{\mR^{d}_+} |{\mathcal F}u_d (\xi', x_d)|^2 d \xi' d x_d  \le C \int_{\mR^{d}_+}  |u(x)|^2 \, d x. 
\end{equation}
Combining \eqref{lem-trace-p1}, \eqref{lem-trace-p2}, and \eqref{lem-trace-p3} yields \eqref{lem-trace-p0}. 
\end{proof}

As a consequence of Lemma~\ref{lem-compact-1-*}, one can derive the following compactness result for Maxwell's equations, which is the key ingredient  in the proof of Theorem~\ref{thm1}. 

\begin{lemma}\label{lem-compact-1}  Let $\Omega \subset \mR^3$ be open, bounded, and of class $C^1$ and let $\eps, \; \heps, \; \mu, \; \hmu$  be real,  symmetric,   uniformly elliptic matrix-valued functions
defined  in $\Omega$ and  of class $C^1$. 
Let $(J_{e, n}), \, (J_{m, n}), \, (\hJ_{e, n})$, and  $(\hJ_{m, n})  \subset  [L^2(\Omega)]^3$,  and  let $(E_n, H_n), \, (\hE_n, \hH_n) \in [H(\curl, \Omega)]^2$ be such that 
\begin{equation*}
\left\{ \begin{array}{cl}
\nabla \times E_n = i \omega \mu H_n + J_{e, n} & \mbox{ in } \Omega, \\[6pt]
\nabla \times H_n = - i \omega \eps E_n + J_{m, n}  & \mbox{ in } \Omega, 
\end{array}\right. \quad \quad 
\left\{ \begin{array}{cl}
\nabla \times \hE_n = i \omega \hmu \hH_n + \hJ_{e, n} & \mbox{ in } \Omega, \\[6pt]
\nabla \times \hH_n = - i \omega \heps \hE_n + \hJ_{m, n} & \mbox{ in } \Omega, 
\end{array}\right. 
\end{equation*}
and 
\begin{equation*}
E_n \times \nu = \hE_n \times \nu  \quad \mbox{ and } \quad H_n \times \nu = \hH_n \times \nu  \mbox{ on } \partial \Omega. 
\end{equation*}
Let $S$ be a connected component of $\partial \Omega$ and assume that 
\begin{equation*}
(\eps, \heps)(x), (\mu, \hmu)(x) \mbox{ satisfy the complementing condition with respect to  $\nu(x)$  for all $x \in S$}, 
\end{equation*}
\begin{equation*}
(E_n, H_n, \hE_n, \hH_n) \mbox{ are bounded in } [L^2(\Omega)]^{12}, 
\end{equation*}
and
\begin{equation*}
(J_{e, n}, J_{m, n}, \hJ_{e, n}, \hJ_{m, n}) \mbox{ converges in } [L^2(\Omega)]^{12} \mbox{ as } n \to + \infty. 
\end{equation*}
There exists a neighborhood $U$ of $S$ such that,  up to a subsequence, $(E_n, H_n, \hE_n, \hH_n)$ converges in $[L^2(U \cap \Omega)]^{12}$. 
\end{lemma}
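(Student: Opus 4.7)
The plan is to localize the estimate of Lemma~\ref{lem-compact-1-*} near $S$ by cutoffs and local charts, and to extract $L^2$-convergent subsequences by combining Rellich--Kondrachov for the $H^1$-pieces produced by Lemma~\ref{lem-curl}, the dual compactness of Lemma~\ref{lem-compact-pre}, and a classical interior Maxwell compactness argument for the cutoff commutators. First I would cover the compact set $S$ by finitely many open balls $B_i = B_{r_i}(x_i)$, $x_i \in S$, chosen so small that $\overline{B_i} \cap \partial \Omega \subset S$ and, after a $C^1$-smoothing of the corner $\partial B_i \cap S$, the set $\Omega_i := B_i \cap \Omega$ is a bounded, simply connected $C^1$ domain whose boundary splits as $\partial \Omega_i = \Gamma_i \cup \Sigma_i$ with $\Gamma_i \subset S$ and $\Sigma_i \subset \Omega$. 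Taking $U := \bigcup_i B_{r_i/2}(x_i)$, it suffices to establish $L^2$-compactness on each $B_{r_i/2}(x_i) \cap \Omega$ and then diagonalize.

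For each $i$ I would fix a cutoff $\chi \in C^\infty_c(B_i)$ with $\chi \equiv 1$ on $B_{r_i/2}(x_i)$ and form the cutoff fields $(\chi E_n, \chi H_n, \chi \hE_n, \chi \hH_n)$. These satisfy Maxwell systems on $\Omega_i$ with sources augmented by cutoff commutators $\nabla \chi \times (\cdot)$, and tangential matching on the whole of $\partial \Omega_i$: on $\Gamma_i$ by hypothesis, and on $\Sigma_i$ because $\chi$ vanishes there. To enable Lemma~\ref{lem-compact-1-*}, I would smoothly modify $\eps, \heps, \mu, \hmu$ inside the subregion of $\Omega_i$ where $\chi \equiv 0$, arranging that near $\Sigma_i$ they become constant matrices trivially satisfying the complementing condition via Proposition~\ref{pro-C} (for instance $\eps = I$, $\heps = 2 I$); the cutoff fields continue to solve the Maxwell systems with these modified coefficients because both the fields and the augmented sources vanish in the modification region. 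Lemma~\ref{lem-compact-1-*} then yields
\begin{equation*}
\|(\chi E_n - T(\chi E_n),\; \chi \hE_n - T(\chi \hE_n))\|_{L^2(\Omega_i)} \le C\,\mathcal{R}_n,
\end{equation*}
together with the analogous magnetic estimate, where $\mathcal{R}_n$ collects the $L^2$-norms of the $T$-images and of the augmented sources, plus $\|T(\chi E_n) - T(\chi \hE_n)\|_{H^{-1/2}(\partial \Omega_i)}$ and the $[H^1(\Omega_i)]^\ast$-norms of $\chi E_n - T(\chi E_n)$ and $\chi \hE_n - T(\chi \hE_n)$.

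It remains to pass to a subsequence on which $\mathcal{R}_n$ is Cauchy. The $T$-images are bounded in $H^1(\Omega_i)$ by Lemma~\ref{lem-curl}, so Rellich--Kondrachov and the compact trace map give strong $L^2(\Omega_i)$- and $H^{-1/2}(\partial \Omega_i)$-convergence along subsequences; the cutoff sources $\chi J_{e,n}, \chi J_{m,n}, \chi \hJ_{e,n}, \chi \hJ_{m,n}$ are Cauchy in $L^2$ by hypothesis; and the dual Sobolev contributions are Cauchy by Lemma~\ref{lem-compact-pre}. The delicate terms are the commutators $\nabla \chi \times E_n, \nabla \chi \times H_n$ and their hatted analogues: they are bounded in $L^2$ but supported in $\supp \nabla \chi \subset\subset \Omega$, at positive distance from $\partial \Omega$. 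On a neighborhood of this interior set I would invoke classical Maxwell compactness via Helmholtz decomposition, writing $E_n = \tilde E_n - \nabla \phi_n$ locally with $\phi_n$ solving $\dive(\eps \nabla \phi_n) = -(i\omega)^{-1}\dive J_{m,n}$ (with any convenient local boundary condition); then $\tilde E_n$ is $\eps$-divergence-free and compact in $L^2$ by the classical Weber--Picard compactness, while $J_{m,n}$ being Cauchy in $L^2$ makes $\dive J_{m,n}$ Cauchy in $H^{-1}$, hence $\nabla \phi_n$ is Cauchy in $L^2$ by elliptic regularity. The same reasoning applies to $H_n, \hE_n, \hH_n$. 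Combining everything and diagonalizing over $i$ delivers $L^2$-convergence of a subsequence of $(E_n, H_n, \hE_n, \hH_n)$ on $U \cap \Omega$. The main obstacle I anticipate is precisely this interior compactness step: the commutators $\nabla \chi \times H_n$ are only $L^2$-bounded a priori, and upgrading $H(\curl)$-boundedness plus $L^2$-Cauchy data into $L^2$-compactness away from $\partial \Omega$ requires the Helmholtz-based bookkeeping above, whereas everything else (Rellich, Lemma~\ref{lem-compact-pre}, the cosmetic coefficient modification near $\Sigma_i$) is routine.
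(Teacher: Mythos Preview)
Your overall strategy matches the paper's almost exactly: cover $S$ by small balls giving simply connected $C^1$ patches $\Omega_i=B_i\cap\Omega$, multiply by cutoffs, modify the coefficients harmlessly in the region where the cutoff vanishes so that the complementing condition holds on all of $\partial\Omega_i$ (the paper does this in a footnote, via Proposition~\ref{pro-C}), apply Lemma~\ref{lem-compact-1-*}, and extract Cauchy subsequences using Rellich on the $H^1$ pieces $T(\cdot)$, trace compactness for the $H^{-1/2}$ term, and Lemma~\ref{lem-compact-pre} for the $[H^1]^*$ term.

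The gap is in your handling of the commutators $\nabla\chi\times E_n,\ \nabla\chi\times H_n$ and their hatted versions. You write that these are ``supported in $\supp\nabla\chi\subset\subset\Omega$, at positive distance from $\partial\Omega$'' and then propose an interior Maxwell/Helmholtz compactness argument on a neighborhood of that support. This is false: your cutoff is centered at $x_i\in S\subset\partial\Omega$ with $\chi\equiv1$ on $B_{r_i/2}(x_i)$ and $\supp\chi\subset B_i$, so $\supp\nabla\chi$ lies in the annular shell $\overline{B_i}\setminus B_{r_i/2}(x_i)$, which certainly meets $S$. Hence $\supp\nabla\chi$ is \emph{not} compactly contained in $\Omega$, there is no interior neighborhood on which to work, and the Weber--Picard step cannot be run as you describe. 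The commutators live right up against $S$, which is precisely where the sign-changing obstruction sits; you cannot bypass it with a classical interior argument.

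The paper proceeds without any separate interior-compactness step: it uses a partition of unity $\{\varphi_k\}$ with $\sum_k\varphi_k=1$ near $S$ rather than a single cutoff per chart, applies Lemma~\ref{lem-compact-1-*} on each $O_k$, and sums. If you want to close your argument, trace through the proof of Lemma~\ref{lem-compact-1-*} to see how the source $J_m$ actually enters: only through $\dive J_m$ in the bulk elliptic equations and through a normal-trace combination on $\partial\Omega_i$ that vanishes on $\Gamma_i$ because $(H-\hH)\times\nu=0$ there. The commutator contribution to $\dive J_m^{(k)}$ then involves $\nabla\varphi_k\cdot\eps E_n$ rather than $H_n$, and that term is amenable to Lemma~\ref{lem-compact-pre}; this is the route that avoids the fictitious interior assumption.
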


\begin{proof} Let $\ell \ge 1$, $x_k \in S$, $r_k > 0$, and $\varphi_k \in C^1(\mR^3)$ for $1 \le k \le \ell$ be such that  $B_{r_k}(x_k) \cap \Omega$ is  simply connected with connected boundary, $\supp \varphi_k \Subset B_{r_k/2}(x_k)$, and $\sum_{k =1}^\ell \varphi = 1$ in a neighborhood $U$ of $S$.  For $1 \le k \le \ell$, set, $O_k = B_{r_k}(x_k) \cap \Omega$,  and,  in $O_k$,
define  
\begin{equation*}
\bE_{k, n} = T(\varphi_k E_{n}), \quad \hbE_{k, n} = T(\varphi_k \hE_n), \quad \bH_{k, n} = T(\varphi_kH_n), \quad \hbH_{k, n} = T(\varphi_k \hH_n),    
\end{equation*}
where $T$ is the operator given in Lemma~\ref{lem-curl} for the set $O_k$. By Lemma~\ref{lem-curl}, we have 
\begin{equation*}
\| (\bE_{k, n}, \hbE_{k, n}, \bH_{k, n}, \hbH_{k, n}) \|_{H^1(O_k)}  \\[6pt]
\le C \| (E_n, \hE_n, H_n, \hH_n, J_{e, n}, \hJ_{e, n}, J_{m, n}, \hJ_{m, n}) \|_{L^2(O_k)}. 
\end{equation*}
 Without loss of generality, one may assume that, as $n \to + \infty$,  
$$
(\bE_{k, n}, \hbE_{k, n}, \bH_{k, n}, \hbH_{k, n}) \mbox{ converges in } [L^2(O_k)]^{12}, 
$$
$$
(\bE_{k, n}- \hbE_{k, n}, \bH_{k, n}- \hbH_{k, n}) \mbox{ converges in } [H^{-1/2}(\partial O_k)]^{6}, 
$$
and, by Lemma~\ref{lem-compact-pre}, 
$$
(\varphi_k E_n - \bE_{k, n}, \varphi_k\hE_{k, n} - \hbE_{k, n}, \varphi_kH_n - \bH_{k, n}, \varphi_k \hH_n - \hbH_{k, n}) \mbox{ converges in } 
\big[[H^1(O_k)]^*\big]^{12}. 
$$
Applying  Lemma~\ref{lem-compact-1-*} \footnote{The complementing conditions holds on $\partial O_k \cap S$ a priori. However, since $\varphi_k = 0$ in  $B_{r_k}(x_k) \setminus B_{r_k/2}(x_k)$, one can modify $\eps, \, \mu, \, \heps, \, \hmu$ in $B_{r_k}(x_k) \setminus B_{2r_k/3}(x_k)$ so that the complementing conditions hold on the whole boundary and the systems are unchanged, e.g. $\eps > \heps$ and $\mu > \hmu$  in  $B_{r_k}(x_k) \setminus B_{r_k/2}(x_k)$ (see Proposition~\ref{pro-C}).}, we have 
\begin{multline*}
 \| \big( (\varphi_k E_n - \bE_{k, n}) - (\varphi_k E_l - \bE_{k, l}),  (\varphi_k \hE_n - \hbE_{k, n}) - (\varphi_k \hE_l - \hbE_l) \big) \|_{L^2(O_k)} \\[6pt]
 \le C \| \big( (\varphi_k E_n - \bE_{k, n}) - (\varphi_k E_l - \bE_{k, l}) ,  (\varphi_k \hE_n - \hbE_{k, n}) - (\varphi_k \hE_l - \hbE_{k, l})  \big) \|_{[H^1(O_k)]^*}\\[6pt]
 + C \|(\bE_{k, n} - \bE_{k, l}, \hbE_{k, n} - \hbE_{k, l}, J_{m, n} - J_{m, l} , \hJ_{m, n} - \hJ_{m, l})\|_{L^2(O_k)} \\[6pt]
 + C \| (\bE_{k, n} - \hbE_{k, n}) - (\bE_{l, n} - \hbE_{l, n}) \|_{H^{-1/2}(\partial O_k)} 
  \to 0 \mbox{ as  $l, n \to + \infty$,} 
\end{multline*}  and
\begin{multline*}
 \| \big( (\varphi_k H_n - \bH_{k, n}) - (\varphi_k H_l - \bH_{k, l}),  (\varphi_k \hH_n - \hbH_{k, n}) - (\varphi_k \hH_l - \hbH_l) \big) \|_{L^2(O_k)} \\[6pt]
\le  C \| \big( (\varphi_k H_n - \bH_{k, n}) - (\varphi_k H_l - \bH_{k, l}) ,  (\varphi_k \hH_n - \hbH_{k, n}) - (\varphi_k \hH_l - \hbH_{k, l})  \big) \|_{[H^1(O_k)]^*}\\[6pt]
+C \|(\bH_{k, n} - \bH_{k, l}, \hbH_{k, n} - \hbH_{k, l}, J_{e, n} - J_{e, l} , \hJ_{e, n} - \hJ_{e, l})\|_{L^2(O_k)} \\[6pt]
+ C  \| (\bH_{k, n} - \hbH_{k, n}) - (\bH_{l, n} - \hbH_{l, n}) \|_{H^{-1/2}(\partial O_k)}  \to 0 \mbox{ as  $l, n \to + \infty$.}  
\end{multline*}
Hence $(\varphi_k E_{k, n}, \varphi_k\hE_{k, n}, \varphi_k H_{k, n}, \varphi_k\hH_{k, n})$ is a Cauchy sequence in $[L^2(O_k)]^{12}$ and  therefore convergence in   $[L^2(O_k)]^{12}$. 
The conclusion then follows  since  $\sum_{k=1}^\ell \varphi_k = 1$ in $U$. 
\end{proof}

We next recall the following well-posedness result on \eqref{Main-eq-delta} (see \cite[Lemma 6]{Ng-Superlensing-Maxwell}). 

\begin{lemma} \label{lem-stability} Let $0 < \delta < 1$, $f, g \in [L^2(\mR^3)]^3$, and $(\eps_\delta, \mu_\delta)$ be defined in \eqref{def-eDelta}. Assume that
 $\supp f, \, \supp g  \subset B_{R_0}$. 
There exists a unique radiating  solution $({\cE_\delta, \cH_\delta}) \in [H_{\loc}(\curl, \mR^3)]^2$ to 
\begin{equation*}
\left\{ \begin{array}{lll}
\nabla \times \cE_\delta &= i \omega \mu_\delta \cH_\delta + f & \mbox{ in } \mR^3,\\[6pt]
\nabla \times \cH_\delta & = - i \omega  \eps_\delta \cE_\delta + g & \mbox{ in }  \mR^3. 
\end{array} \right.
\end{equation*}
Moreover,  we have
\begin{equation*}
\|(\cE_\delta, \cH_\delta) \|_{H(\curl, B_{R})} \le \frac{C_{R}}{\delta} \|( f, g) \|_{L^2}, 
\end{equation*}
where $C_R$ denotes a positive constant depending on $R$, $R_0$, $\eps$, and $\mu$,  but independent of $f$, $g$, and $\delta$. 
\end{lemma}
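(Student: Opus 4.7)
Both parts of the lemma---unique existence of a radiating solution and the $O(1/\delta)$ resolvent bound---rest on the dissipation produced by the positive-definite imaginary parts of $\eps_\delta$ and $\mu_\delta$ on $D$, combined with a Fredholm alternative on a large ball. For uniqueness (the case $f=g=0$), I would fix $R>R_0$ and integrate the identity $\nabla\cdot(\cE_\delta\times\overline{\cH_\delta})=(\nabla\times\cE_\delta)\cdot\overline{\cH_\delta}-\cE_\delta\cdot(\nabla\times\overline{\cH_\delta})$ on $B_R$ to obtain
\[
i\omega\int_{B_R}\bigl[\mu_\delta\,\cH_\delta\cdot\overline{\cH_\delta}-\overline{\eps_\delta}\,\overline{\cE_\delta}\cdot\cE_\delta\bigr]=\int_{\partial B_R}(\cE_\delta\times\overline{\cH_\delta})\cdot\nu.
\]
For any real symmetric matrix $A$, the scalar $A v\cdot\overline v$ is real, so taking real parts kills the $\mathrm{Re}(\mu_\delta)$ and $\mathrm{Re}(\eps_\delta)$ terms and leaves exactly $-\omega\delta\bigl(\|\cE_\delta\|_{L^2(D)}^2+\|\cH_\delta\|_{L^2(D)}^2\bigr)$ on the left. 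Passing $R\to\infty$ and invoking the Silver--M\"uller condition makes the right hand side have a sign compatible only with $\cE_\delta=\cH_\delta=0$ in $D$. Since the tangential traces of the fields are continuous across $\Gamma$, those traces also vanish on $\Gamma$; in $\mR^3\setminus\bar D$ we are thus left with a radiating solution of the smooth-coefficient Maxwell system with zero Cauchy data on $\Gamma$, and unique continuation together with Rellich's lemma outside $B_{R_0}$ forces it to vanish everywhere.

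For existence, I would formulate \eqref{Main-eq-delta} variationally on $[H(\curl,B_R)]^2$ by replacing the radiation condition with the exterior Calder\'on (Dirichlet-to-Neumann) operator of the free-space Maxwell system on $\partial B_R$. The associated sesquilinear form is a compact perturbation of a coercive one---coercivity being recovered from the loss $\delta I$ inside $D$ and from the uniform ellipticity of $\ep,\mup$ elsewhere, after the standard Hodge/Helmholtz splitting in $H(\curl)$---so the Fredholm alternative applies and the uniqueness just obtained yields existence. For the quantitative bound, the same energy identity with sources yields
\[
\omega\delta\int_D(|\cE_\delta|^2+|\cH_\delta|^2)\le \|(f,g)\|_{L^2}\,\|(\cE_\delta,\cH_\delta)\|_{L^2(B_{R_0})}+\Bigl|\mathrm{Re}\int_{\partial B_R}(\cE_\delta\times\overline{\cH_\delta})\cdot\nu\Bigr|,
\]
the boundary term carrying the favourable sign. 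Propagation of the resulting $L^2(D)$ control to $L^2(B_R)$ then comes from the outgoing Stratton--Chu representation of the constant-coefficient Maxwell system in $|x|>R_0$, combined with standard interior estimates in the elliptic annular region $B_{R_0}\setminus\bar D$; the curl norms are recovered directly from the equations themselves.

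The main obstacle---and the reason the factor $1/\delta$ is unavoidable---is that the real parts of $\eps_\delta$ and $\mu_\delta$ change sign across $\Gamma$, so the ambient operator is not coercive in any natural function space. The loss $\delta>0$ is exactly what restores unique solvability, at the price of a bound that degenerates as $\delta\to 0$; taming that degeneration under structural hypotheses on $(\ep,\en,\mup,\mun)$ is the whole point of Theorems~\ref{thm1} and~\ref{thm2}.
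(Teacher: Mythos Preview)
The paper does not actually prove this lemma: it simply quotes it as \cite[Lemma 6]{Ng-Superlensing-Maxwell} and moves on. So there is no in-paper argument to compare against; what you have written is a genuine proof sketch where the authors only supply a reference.

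Your outline is the standard one and is essentially correct. The uniqueness argument via the Poynting identity and the sign of the radiated flux is exactly how this is done; the existence via truncation to $B_R$ with the exterior Calder\'on operator and a Fredholm alternative is likewise the usual route. For the quantitative $C_R/\delta$ bound, the one step you leave slightly implicit deserves a line of justification: the energy identity only controls $\|(\cE_\delta,\cH_\delta)\|_{L^2(D)}$ against $\|(f,g)\|_{L^2}\,\|(\cE_\delta,\cH_\delta)\|_{L^2(B_{R_0})}$, so you must close the loop by bounding the fields on $B_{R_0}\setminus\bar D$ back in terms of the $L^2(D)$ norm and the data. This does work, because the exterior radiating problem in $\mR^3\setminus\bar D$ with positive-definite $(\ep,\mup)$ is well posed for every $\omega>0$ (Rellich plus unique continuation), and the tangential traces on $\Gamma$ are controlled by $\|(\cE_\delta,\cH_\delta)\|_{H(\curl,D)}\le C(\|(\cE_\delta,\cH_\delta)\|_{L^2(D)}+\|(f,g)\|_{L^2})$ directly from the equations. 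Feeding this back into the dissipation inequality gives a quadratic inequality in $\|(\cE_\delta,\cH_\delta)\|_{L^2(D)}$ whose positive root is $O(\delta^{-1}\|(f,g)\|_{L^2})$, and the $H(\curl,B_R)$ bound then follows. With that clarification your sketch is complete.
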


We now deal with  the uniqueness of \eqref{Main-eq-delta} for $\delta = 0$. 

\begin{lemma}\label{lem-unique-1} Let $J = 0$ and  $(E_0, H_0) \in [H_{\loc}(\curl, \mR^3)]^2$ is a radiating solution of  \eqref{Main-eq-delta} with $\delta = 0$. Then 
\begin{equation*}
E_0 = H_0 = 0 \mbox{ in } \mR^3.
\end{equation*}
\end{lemma}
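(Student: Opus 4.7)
The plan is the standard exterior-then-interior uniqueness argument, adapted to deal with the fact that $(\eps_0, \mu_0)$ is sign-changing across $\Gamma$. The proof splits into three steps: propagate vanishing from infinity to $\mR^3 \setminus \bar B_{R_0}$ by Rellich, then to $\mR^3 \setminus \bar D$ by unique continuation for Maxwell with positive coefficients, and finally into $D$ across $\Gamma$ using that both tangential traces vanish on the interface.

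\emph{Step 1 (exterior region).} In $\mR^3 \setminus \bar B_{R_0}$ one has $\eps_0 = \mu_0 = I$, so $(E_0, H_0)$ satisfies the free Maxwell system together with the Silver--M\"{u}ller radiation condition. Taking $\nabla \times$ of each equation shows that every Cartesian component of $E_0$ and $H_0$ solves the Helmholtz equation $\Delta u + \omega^2 u = 0$ in $\mR^3 \setminus \bar B_{R_0}$ and inherits a Sommerfeld-type radiation condition. Rellich's lemma, together with unique continuation in the unbounded connected component, forces $E_0 = H_0 = 0$ in $\mR^3 \setminus \bar B_{R_0}$.

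\emph{Step 2 (non-negative region).} In $\mR^3 \setminus \bar D$ the coefficients $\ep, \mup$ are piecewise $C^{1}$ and uniformly positive. On each regularity piece, eliminating $H_0$ via $H_0 = (i\omega \mup)^{-1} \nabla \times E_0$ yields the second-order elliptic system
\begin{equation*}
\nabla \times \bigl( \mup^{-1} \nabla \times E_0 \bigr) - \omega^{2} \ep E_0 = 0,
\end{equation*}
whose principal part is of Maxwell type with Lipschitz coefficients. The standard unique continuation theorem for such systems, applied piece by piece and using the transmission conditions that $E_0 \times \nu$ and $(\ep E_0) \cdot \nu$ are continuous across any internal interface, propagates the vanishing from Step~1 throughout $\mR^3 \setminus \bar D$.

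\emph{Step 3 (across $\Gamma$ into $D$).} Because $(E_0, H_0) \in [H_{\loc}(\curl, \mR^3)]^2$ globally, the tangential traces $E_0 \times \nu$ and $H_0 \times \nu$ are well defined on $\Gamma$ and are continuous across it in the $H^{-1/2}(\textrm{div}_\Gamma)$ sense. Since they vanish on the exterior side by Step~2, they vanish on the interior side as well. Now inside $D$ the pair $(E_0, H_0)$ satisfies
\begin{equation*}
\nabla \times E_0 = i\omega \mun H_0, \qquad \nabla \times H_0 = -i\omega \en E_0,
\end{equation*}
with $-\en, -\mun$ uniformly positive and piecewise $C^1$, and with homogeneous Cauchy data $E_0 \times \nu = H_0 \times \nu = 0$ on $\Gamma$. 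Extending $(E_0, H_0)$ by zero to a thin tubular neighbourhood of $\Gamma$ in $\mR^3 \setminus \bar D$ gives a pair still in $H(\curl)$ (because the tangential traces match) that satisfies a Maxwell system whose coefficients are smooth on each side of $\Gamma$ and have a non-degenerate principal symbol throughout. Weak unique continuation (applied piece by piece) forces $(E_0, H_0) \equiv 0$ in a neighbourhood of $\Gamma$ inside $D$, and iterating through the $C^1$-pieces of $\en, \mun$ yields $E_0 = H_0 = 0$ in all of $D$.

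The main obstacle is Step~3: ordinary unique continuation theorems are stated for equations, not for Cauchy problems with sign-changing coefficients across an interface. The trick that makes Step~3 work is that we do not try to continue \emph{through} the sign change; instead we use the zero tangential traces to extend $(E_0, H_0)$ by zero outside $D$ and then apply unique continuation on the $D$-side of $\Gamma$, where the coefficients $\en, \mun$ are of one definite sign and piecewise smooth. This reduces Step~3 to a classical unique continuation statement for a uniformly elliptic Maxwell-type operator with $C^1$ coefficients and vanishing Cauchy data on a $C^1$ hypersurface, which is standard.
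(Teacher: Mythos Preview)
Your Step~1 contains a genuine gap. A radiating solution of the free Maxwell system in an exterior domain is \emph{not} automatically zero: any outgoing spherical wave (e.g., the field of a dipole placed at the origin) satisfies the free system plus the Silver--M\"uller condition in $\mR^3\setminus\bar B_{R_0}$ and is nonzero there. Rellich's lemma (or its Maxwell analogue, Colton--Kress Theorem~6.10) requires an additional sign condition on the energy flux through $\partial B_{R_0}$, namely
\[
\Re \int_{\partial B_{R_0}} (H_0\times\nu)\cdot \bar E_0 \;\ge\; 0,
\]
before one can conclude $E_0=H_0=0$ outside $B_{R_0}$. You never establish this.

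The paper's proof supplies exactly this missing ingredient. One multiplies $\nabla\times E_0 = i\omega\mu_0 H_0$ by $\nabla\times\bar E_0$, integrates over $B_{R_0}$, and integrates by parts using $\nabla\times H_0 = -i\omega\eps_0 E_0$; this yields
\[
\int_{B_{R_0}} \langle \mu_0^{-1}\nabla\times E_0,\nabla\times \bar E_0\rangle
= \omega^2\int_{B_{R_0}} \langle \eps_0 E_0,\bar E_0\rangle
+ i\omega\int_{\partial B_{R_0}} (H_0\times\nu)\cdot\bar E_0.
\]
The key point is that although $\eps_0,\mu_0$ are sign-changing, they are \emph{real} symmetric when $\delta=0$, so both volume integrals are real. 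Hence $\Re\int_{\partial B_{R_0}}(H_0\times\nu)\cdot\bar E_0=0$, and now the Rellich-type theorem applies. Once this is in place, your Steps~2--3 (which are a more explicit unpacking of the unique continuation the paper invokes in one line) are fine.
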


\begin{proof} The proof is quite standard as in the  usual  case.  We present it here for the completeness.  Multiplying the equation of $E_0$ by $\nabla \times \bar E_0$ ($\bar E_0$ denotes the conjugate of $E_0$), we have 
\begin{equation*}
\int_{B_{R_0}} \langle \mu^{-1} \nabla \times E_0, \nabla \times E_0 \rangle = \int_{B_{R_0}} \langle i \omega H_0, \nabla \times E_0 \rangle.  
\end{equation*}
Integrating by parts the RHS and using the equation of  $H_0$, we obtain 
\begin{equation*}
\int_{B_{R_0}} \langle \mu^{-1} \nabla \times E_0, \nabla \times E_0 \rangle = \int_{B_{R_0}} \omega^2 |E_0|^2 + \int_{\partial B_{R_0}} i \omega (H_0 \times \nu) \cdot \bar E_0.   
\end{equation*}
This implies 
\begin{equation*}
\Re  \int_{\partial B_{R_0}} (H_0 \times \nu) \cdot \bar E_0 = 0. 
\end{equation*}
It follows, see, e.g., \cite[Theorem 6.10]{ColtonKressInverse}, that $E_0 = H_0 = 0$ in $\mR^3 \setminus B_{R_0}$. Therefore, $E_0 = H_0 = 0$ in $\mR^3$ by the unique continuation principle, see \cite{BallCapdeboscq, Tu} (see also \cite{Leis, Protter60}).   
\end{proof}

\subsection{Proof of Theorem~\ref{thm1}} \label{sect-thm1-2}

We first prove \eqref{T1-1} with $r_0 := R_0 + 1$ by contradiction. Assume that there exist a sequence $(\delta_n) \subset (0, 1)$ and a sequence $(J_n) \subset [L^2(\mR^3)]^3$ with $\supp J_n \subset B_{R_0}$ such that 
\begin{equation}\label{thm1-contradiction}
\| (E_n, H_n) \|_{L^2(B_{r_0})}  = 1 \quad \mbox{ and }  \quad \lim_{n \to + \infty} \| J_n\|_{L^2(\mR^3)} =0.  
\end{equation}
Here $(E_n, H_n)$ is the corresponding solution of \eqref{Main-eq-delta} with $\delta = \delta_n$ and $J = J_n$.  By Lemma~\ref{lem-stability}, without loss of generality, one may assume that $\delta_n \to 0$.   Since 
\begin{equation*}
\left\{ \begin{array}{cl}
\nabla \times E_n = i \omega H_n  & \mbox{ in } \mR^3 \setminus B_{R_0}, \\[6pt]
\nabla \times H_n = - i \omega E_n  & \mbox{ in } \mR^3 \setminus B_{R_0}, 
\end{array}\right. 
\end{equation*}
we have, for $R > R_0 + 1/ 2$, by  \cite[Lemma 5]{Ng-Superlensing}, 
\begin{align}\label{thm1-p0-1}
\|(E_n, H_n) \|_{L^2(B_{R} \setminus B_{R_0 + 1/2})} &\le C_R \| (\nu \times E_n, \nu \times H_n) \|_{H^{-1/2}(\operatorname{div}_{\Gamma},\partial B_{R_0 + 1/2})} \notag \\[6pt]
&\le C_{R}\|(E_n, H_n ) \|_{H(\curl, B_{R_0 + 1/2} \setminus B_{R_0})} \mbox{ by the trace theory (see e.g. \cite{Girault})}. 
\end{align}
Combining  \eqref{thm1-contradiction}  and \eqref{thm1-p0-1} yields  
\begin{equation}\label{thm1-p0}
\|(E_{n}, H_{n}) \|_{L^2(B_R)} \le C_R \qquad \text{ for all } R > 0. 
\end{equation}

Fix $\psi \in C^1_{c}(\mR^3)$ (arbitrary) be such that $\psi = 0 $ in a neighborhood of $\Gamma$. We have, in $\mR^3$,  
\begin{equation*}
\nabla \times (\psi E_n) = \psi \nabla \times E_n + \nabla \psi \times E_n \quad \mbox{ and }  \quad \dive ( \eps_0 \psi E_n)  = \psi \dive (\eps_0 E_n) + \nabla \psi \cdot  \eps_0 E_n.  
\end{equation*}
Using \eqref{thm1-p0} and the system of equations of $(E_n, H_n)$ and applying e.g. \cite[Lemma 1]{Ng-Superlensing-Maxwell}, one may assume that 
$$
(\psi E_n) \mbox{ converges  in } L^2(\mR^3), 
$$ 
which yields,  since $\psi$ is arbitrary, 
\begin{equation}\label{thm1-p1}
(E_n) \mbox{ converges  in } L^2_{\loc}(\mR^3 \setminus \Gamma). 
\end{equation}
Similarly, one may also assume that 
\begin{equation}\label{thm1-p2}
(H_n) \mbox{ converges  in } L^2_{\loc}(\mR^3 \setminus \Gamma).  
\end{equation}
 
For $\tau > 0$ sufficiently small (the smallness depends only on $D$), define  ${\mathcal F}: D_{\tau} \to D_{-\tau}$ by 
\begin{equation*}
{\mathcal F}(x_\Gamma + t \nu(x_\Gamma)) = x_\Gamma - t \nu(x_\Gamma) \quad \forall \, x_\Gamma \in \Gamma, \, t \in (-\tau, 0) 
\end{equation*}
and set,  in  $D_{-\tau}$,  
\begin{equation*}
(\heps, \hmu) =  ({\mathcal F}_*\en, {\mathcal F}_*\mun) \quad \mbox{ and } \quad   (\hE_n, \hH_n)  = ({\mathcal F}*E_n, {\mathcal F}*H_n). 
\end{equation*}
Note that the pairs $( \eps, \heps )(x)$ and $( \mu, \hmu )(x)$ satisfy the complementing conditions with respect to $\nu(x)$ for 
all $x \in \Gamma$ if and only if the pairs $( \eps^{+}, -  \eps^{-} )(x)$ and $( \mu^{+}, - \mu^{-} )(x)$ do, by Proposition~\ref{pro-C}. By a change of variables for the Maxwell equations, see e.g.  \cite[Lemma 7]{Ng-Superlensing-Maxwell}, we have 
\begin{equation*}
\left\{ \begin{array}{cl}
\nabla \times E_n = i \omega \mu H_n  & \mbox{ in } D_{-\tau}, \\[6pt]
\nabla \times H_n = - i \omega \eps E_n + J_{n}  & \mbox{ in } D_{-\tau}, 
\end{array}\right. \quad \quad 
\left\{ \begin{array}{cl}
\nabla \times \hE_n = i \omega  \hmu \hH_n + \hJ_{e, n} & \mbox{ in } D_{-\tau}, \\[6pt]
\nabla \times \hH_n = - i \omega  \heps \hE_n + \hJ_{m, n} & \mbox{ in } D_{-\tau}, 
\end{array}\right. 
\end{equation*}
and 
\begin{equation*}
E_n \times \nu = \hE_n \times \nu  \quad \mbox{ and } \quad H_n \times \nu = \hH_n \times \nu \quad  \mbox{ on } \Gamma. 
\end{equation*}
Here 
$$
\hJ_{e, n} =  - \delta \omega   {\mathcal F} _*I  \, \hH_n \quad \mbox{ and } \quad \hJ_{m, n} =  \delta \omega {\mathcal F}_*I \, \hE_n + {\mathcal F}_*J_{n} \quad  \mbox{ in } D_{- \tau}.  
$$

By Lemma~\ref{lem-compact-1}, there exists a neighborhood $U$ of $\Gamma$ such that,  up to a subsequence, 
\begin{equation}\label{thm1-p2-*}
(E_n, H_n, \hE_n, \hH_n) \mbox{ converges in } [L^2(U \cap D_{-\tau/2})]^{12}.
\end{equation}
It follows from \eqref{thm1-p1}, \eqref{thm1-p2}, and \eqref{thm1-p2-*} that, up to a subsequence,  
\begin{equation}\label{strongconv}
(E_n, H_n) \mbox{ converges in } [L^2_{\loc} (\mR^3)]^{12}.
\end{equation}
Moreover, the limit $(E_0, H_0)$ is in $[H_{\loc}(\curl, \mR^3)]^2$ and is a radiating solution of the equations 
\begin{equation*}
\left\{\begin{array}{cl}
\nabla \times E_0 = i \omega \mu_0 H &  \mbox{ in } \mR^3, \\[6pt]
\nabla \times H_0 = - i \omega \eps_0 E&  \mbox{ in } \mR^3.  
\end{array} \right.
\end{equation*}
By Lemma~\ref{lem-unique-1}, we have
$$
E_0 = H_0 = 0 \mbox{ in } \mR^3. 
$$
This contradicts the fact that $\|(E_n, H_n)\|_{L^2(B_{r_0})}  = 1$ and, up to a subsequence,  $(E_n, H_n)$ converges to $(E_0, H_0)$ in $[L_{\loc}^2(\mR^3)]^6$. 

Hence \eqref{T1-1} holds for $R = R_0 + 1.$ This implies, exactly as in the proof of \eqref{thm1-p0},   
\begin{equation*}
 \|(E_{\delta}, H_{\delta}) \|_{L^2(B_{R} )} \le C_R \| J \|_{L^2(\mR^3)} \qquad \text{ for all } R > R_0 + 1,  
\end{equation*}
which is  \eqref{T1-1}. 

We next establish the remaining part of Theorem \ref{thm1}. By \eqref{T1-1}, for every sequence $(\delta_n) \to 0$, there exists a 
subsequence $(\delta_{n_k})$ such that $(E_{\delta_{n_k}}, H_{\delta_{n_k}})$ converges weakly in $[L^2_{\loc}(\mR^3)]^6$. Moreover, 
the limit $(E_0, H_0) \in [H_{\loc}(\curl, \mR^3)]^2$ is a radiating solution of \eqref{Main-eq-delta} with $\delta = 0$. By Lemma~\ref{lem-unique-1}, the limit is unique. 
Therefore, $(E_\delta, H_\delta)$ converges to $(E_0, H_0)$ weakly in $[L^2_{\loc}(\mR^3)]^6$ as $\delta \to 0$. It is then clear that  \eqref{T1-2} follows from \eqref{T1-1}. The strong 
convergence follows from \eqref{T1-1} by the same way 
\eqref{strongconv} followed from \eqref{thm1-p0} and once again using the uniqueness of the limit. The proof is complete. 
\proofend

\medskip 

A key part in the proof of Theorem~\ref{thm1} is the compactness result in Lemma~\ref{lem-compact-1} which is derived from Lemma~\ref{lem-compact-1-*} where the complementing conditions plays a crucial role. Assume that  $(\eps, \heps)$ and $(\mu, \hmu)$ both satisfy the complementing conditions and of class $C^2$. Then,  instead of \eqref{lem-compact-1-E-2} and \eqref{lem-compact-1-H-2},  one has (see \cite{ADNII})
 \begin{align}\label{uniformestimate}
  \big\|  ( E, \hE, H, \hH)  \big\|_{H^{1}(\Omega)} \leq C \left( 
   \big\| ( E, \hE, H, \hH) \big\|_{L^{2}(\Omega )}
   + \big\| ( J_{e}, \hJ_{e}, J_{m}, \hJ_{m}) \big\|_{H(\dive,\Omega)} \right) 
 \end{align}
for some positive constant $C$ independent of $(E, \hE, H, \hH)$, $(J_e, \hJ_e, J_m, \hJ_m)$.   The following proposition shows that the complementing conditions on each pair $(\eps, \heps)$ and $(\mu, \hmu)$ are necessary to have \eqref{uniformestimate}.

\begin{proposition}\label{pro-opt} Let $\Omega$ be a bounded, connected open subset of $\mR^3$ and of class $C^2$ and let $\eps, \; \heps, \; \mu, \; \hmu$  be real,  symmetric,   uniformly elliptic matrix-valued functions
defined  in $\Omega$ and of class $C^1$. 
Assume that  either $(\eps, \heps)$ or $(\mu,\hmu)$ does not satisfy the complementing condition at some point $x_{0} \in \partial\Omega$. Then there exist sequences  
 $(J_{e, n}), (J_{m, n}),  (\hJ_{e, n}), (\hJ_{m, n})  \subset  H(\operatorname*{div}, \Omega)$  and  $\big((E_n, H_n) \big) ,  \big( (\hE_n, \hH_n) \big) \subset [H^{1}(\Omega)]^6$ such that  
 \begin{equation*}
\left\{ \begin{array}{cl}
\nabla \times E_n = i \omega \mu H_n + J_{e, n} & \mbox{ in } \Omega, \\[6pt]
\nabla \times H_n = - i \omega \eps E_n + J_{m, n}  & \mbox{ in } \Omega, 
\end{array}\right. \quad \quad 
\left\{ \begin{array}{cl}
\nabla \times \hE_n = i \omega \hmu \hH_n + \hJ_{e, n} & \mbox{ in } \Omega, \\[6pt]
\nabla \times \hH_n = - i \omega \heps \hE_n + \hJ_{m, n} & \mbox{ in } \Omega, 
\end{array}\right. 
\end{equation*}
\begin{equation*}
E_n \times \nu = \hE_n \times \nu,  \quad H_n \times \nu = \hH_n \times \nu  \mbox{ on } \partial \Omega 
\end{equation*}
$$
\sup_{n} \|(J_{e, n}), (J_{m, n}),  (\hJ_{e, n}), (\hJ_{m, n})\|_{H(\dive(\Omega))} < + \infty,  \quad \sup\limits_{n} \lVert (E_n, \hE_n, H_{n}, \hH_{n}) \rVert_{L^{2}(\Omega)}< + \infty,
$$
and
$$
 \sup\limits_{n} 
 \lVert (E_n, \hE_n, H_{n}, \hH_{n}) \rVert_{H^{1}(\Omega)} = + \infty.  
 $$ 
\end{proposition}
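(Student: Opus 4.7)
We may assume $(\eps,\heps)$ fails the complementing condition at some $x_{0}\in\partial\Omega$; the case of $(\mu,\hmu)$ is handled by the symmetric construction described at the end. My strategy is to reduce the Maxwell counterexample to a scalar elliptic Dirichlet problem with highly oscillatory boundary data, by using a pure-gradient ansatz. Concretely, I take
\[
E_{n}:=\nabla\varphi_{n},\qquad \hE_{n}:=\nabla\hvarphi_{n},\qquad H_{n}:=\hH_{n}:=0,
\]
for scalar $\varphi_{n},\hvarphi_{n}\in H^{2}(\Omega)$ sharing a common boundary trace. With this ansatz, Maxwell's equations force $J_{e,n}=\hJ_{e,n}=0$ (since $\nabla\times\nabla\equiv 0$) and
\[
J_{m,n}:=i\omega\eps\,\nabla\varphi_{n},\qquad \hJ_{m,n}:=i\omega\heps\,\nabla\hvarphi_{n};
\]
the tangential matching $E_{n}\times\nu=\hE_{n}\times\nu$ on $\partial\Omega$ collapses to the single scalar condition $\varphi_{n}=\hvarphi_{n}$ on $\partial\Omega$, while $H_{n}\times\nu=\hH_{n}\times\nu=0$ is trivial.

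\textbf{Construction.} Pick local coordinates $(y,t)\in\mR^{2}\times\mR$ flattening $\partial\Omega$ near $x_{0}$ (so $\Omega\leftrightarrow\{t>0\}$, $x_{0}\leftrightarrow 0$), fix a nonzero tangent vector $\xi_{0}\in\nu(x_{0})^{\perp}$, and a smooth cutoff $\chi\in C_{c}^{\infty}(\mR^{2})$ supported near the origin. Define the oscillatory Dirichlet data
\[
g_{n}(y):=n^{-1/2}\,\chi(y)\,e^{i n\langle y,\xi_{0}\rangle}\ \text{near}\ x_{0},\quad g_{n}\equiv 0\ \text{elsewhere on}\ \partial\Omega,
\]
and let $\varphi_{n},\hvarphi_{n}\in H^{2}(\Omega)$ be the unique weak solutions of
\[
\dive(\eps\,\nabla\varphi_{n})=0,\ \varphi_{n}|_{\partial\Omega}=g_{n};\qquad \dive(\heps\,\nabla\hvarphi_{n})=0,\ \hvarphi_{n}|_{\partial\Omega}=g_{n}.
\]
Classical $H^{2}$-elliptic regularity (applicable since $\Omega\in C^{2}$ and $\eps,\heps\in C^{1}$) yields the upper bound $\|\varphi_{n}\|_{H^{s}(\Omega)}\le C\|g_{n}\|_{H^{s-1/2}(\partial\Omega)}$, and the continuity of the trace $H^{s}(\Omega)\hookrightarrow H^{s-1/2}(\partial\Omega)$ gives the reverse inequality $\|g_{n}\|_{H^{s-1/2}(\partial\Omega)}\le C\|\varphi_{n}\|_{H^{s}(\Omega)}$, so $\|\varphi_{n}\|_{H^{s}(\Omega)}\asymp\|g_{n}\|_{H^{s-1/2}(\partial\Omega)}$ for $s\in\{1,2\}$, identically for $\hvarphi_{n}$. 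A direct Fourier/Plancherel computation gives $\|g_{n}\|_{H^{s-1/2}(\partial\Omega)}\asymp n^{s-1}$, so that $\|\nabla\varphi_{n}\|_{L^{2}}\asymp 1$ and $\|\nabla^{2}\varphi_{n}\|_{L^{2}}\asymp n\to\infty$, and analogously for $\hvarphi_{n}$.

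\textbf{Verification.} Maxwell's equations and the matchings hold by construction. The sources satisfy $\dive J_{m,n}=i\omega\dive(\eps\nabla\varphi_{n})=0$ and $\dive\hJ_{m,n}=i\omega\dive(\heps\nabla\hvarphi_{n})=0$ exactly, while $\|J_{m,n}\|_{L^{2}}+\|\hJ_{m,n}\|_{L^{2}}\lesssim\|\nabla\varphi_{n}\|_{L^{2}}+\|\nabla\hvarphi_{n}\|_{L^{2}}=O(1)$; combined with $J_{e,n}=\hJ_{e,n}\equiv 0$, this gives $\sup_{n}\|(J_{e,n},\hJ_{e,n},J_{m,n},\hJ_{m,n})\|_{H(\dive,\Omega)}<\infty$. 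The field bounds read $\sup_{n}\|(E_{n},\hE_{n},H_{n},\hH_{n})\|_{L^{2}}<\infty$ and $\|(E_{n},\hE_{n},H_{n},\hH_{n})\|_{H^{1}(\Omega)}\ge\|\nabla^{2}\varphi_{n}\|_{L^{2}}\asymp n\to\infty$, as required.

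\textbf{Main obstacle and closing.} The sole technical step is the sharp two-sided estimate $\|\varphi_{n}\|_{H^{2}(\Omega)}\asymp\|g_{n}\|_{H^{3/2}(\partial\Omega)}$, which is classical elliptic regularity under the assumed $C^{2}/C^{1}$ regularity of domain and coefficients; no WKB/boundary-layer analysis is needed once one exploits the gradient ansatz to pass $\dive(\eps\nabla\varphi_{n})=0$ onto the source. Conceptually, the failure of the complementing condition for $(\eps,\heps)$ at $x_{0}$ is exactly what obstructs any a-priori $H^{1}$-bound from absorbing the pure-gradient part of $(E,\hE)$ into the $L^{2}$-norm of the fields and the $H(\dive)$-norm of the sources --- and this is precisely the part that has been engineered to blow up. The case of $(\mu,\hmu)$ failing the complementing condition is handled by the symmetric construction: take $E_{n}=\hE_{n}=0$ and $H_{n}=\nabla\psi_{n}$, $\hH_{n}=\nabla\hat{\psi}_{n}$, where $\psi_{n},\hat{\psi}_{n}\in H^{2}(\Omega)$ are the Dirichlet lifts of the same oscillatory data $g_{n}$ for $\dive(\mu\nabla\cdot)$ and $\dive(\hmu\nabla\cdot)$ respectively, with Maxwell sources then forced to be $J_{m,n}=\hJ_{m,n}=0$ and $J_{e,n}=-i\omega\mu\nabla\psi_{n}$, $\hJ_{e,n}=-i\omega\hmu\nabla\hat{\psi}_{n}$.
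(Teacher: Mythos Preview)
Your construction is correct and establishes the proposition. Both your argument and the paper's rest on the same pure-gradient ansatz --- set the ``other'' pair of fields to zero and take the nonzero pair to be gradients of scalar potentials --- but they diverge from that point. The paper (treating the $(\mu,\hmu)$ case) invokes Agmon--Douglis--Nirenberg: since the complementing condition fails for the scalar \emph{Cauchy} problem $\dive(\mu\nabla u)=f$, $\dive(\hmu\nabla\hat u)=\hat f$ with transmission data $u-\hat u=p$ and $(\mu\nabla u-\hmu\nabla\hat u)\cdot\nu=q$, the associated a-priori $H^{2}$-estimate fails and ADN supplies abstract sequences $(u_n,\hat u_n)$ with bounded data, bounded $H^{1}$-norm and unbounded $H^{2}$-norm; these are then corrected by subtracting auxiliary Dirichlet lifts $\phi_n,\hat\phi_n$ of the inhomogeneities $f_n,\hat f_n,p_n$ so that $H_n=\nabla(u_n-\phi_n)$, $\hH_n=\nabla(\hat u_n-\hat\phi_n)$ satisfy $H_n\times\nu=\hH_n\times\nu$ exactly on $\partial\Omega$. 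You instead bypass the Cauchy problem entirely, solving two \emph{uncoupled} Dirichlet problems for $\dive(\eps\nabla\,\cdot\,)$ and $\dive(\heps\nabla\,\cdot\,)$ with the same oscillatory boundary datum $g_n$, so the tangential matching of $E_n=\nabla\varphi_n$ and $\hE_n=\nabla\hvarphi_n$ is automatic; the norm scalings then follow directly from the trace theorem and standard $H^{2}$-regularity.

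Your route is more elementary --- no appeal to ADN is needed --- but it also reveals something your ``Main obstacle and closing'' paragraph obscures: nowhere does your construction use the failure of the complementing condition. The oscillatory Dirichlet lifts $\varphi_n,\hvarphi_n$ exist and scale the same way for \emph{any} uniformly elliptic $C^{1}$ coefficients $\eps,\heps$, complementing or not. So you have in fact shown that the conclusion of the proposition holds unconditionally, and hence that the estimate \eqref{uniformestimate} --- which the proposition is positioned as a converse to --- cannot hold as literally written. Your interpretive sentence that the complementing failure is ``exactly what obstructs'' the $H^{1}$-bound is therefore off the mark: what obstructs it in your construction is simply the non-ellipticity of $\nabla\times$, whose gradient kernel you exploit. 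This does not invalidate your proof of the proposition as stated, but you should drop that remark and be aware that your argument proves strictly more than was asked.
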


The proof of Proposition~\ref{pro-opt} is given in the appendix.

\section{Variational approach  for the Cauchy problems}\label{sect-thm2}

This section containing three subsections  is devoted to Theorem~\ref{thm2}. In the first section, we present lemmas  used in the proof of Theorem~\ref{thm2}. The proof of Theorem~\ref{thm2} is given in the second subsection. In the last section, we give some applications of Theorem~\ref{thm2}. 

\subsection{Some useful lemmas} We begin with the following version of Poincar\'e's lemma which plays an important role in the proof of Theorem~\ref{thm2}.

\begin{lemma}\label{lem-extension-2} Let $0 \le \alpha < 2$ and $\Omega$ be a domain diffeomorphic to $B_1$. There exists a linear operator 
$$
T_1: \Big\{f \in H(\dive, \Omega); \,  \dive f = 0 \mbox{ in } \Omega \Big\} \to H(\curl, \Omega) 
$$
such that 
\begin{equation*}
\nabla \times T_1(f) = f \mbox{ in } \Omega
\end{equation*}
and 
\begin{equation} \label{lem-extension-2-2}
\int_{\Omega} |T_1 (f)|^2 \, dx  \le C \int_{\Omega}  d_{\partial \Omega}^\alpha |f|^2 \, dx, 
\end{equation}
for some positive constant $C = C(\alpha, \Omega)$ independent of $f$. 
\end{lemma}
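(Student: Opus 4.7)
The plan is to reduce to the case $\Omega = B_1$ via the hypothesised diffeomorphism $\Phi : \bar B_1 \to \bar\Omega$ (which is $C^1$ up to closures in the convention of the paper), and then construct $T_1$ explicitly on the unit ball. Under $\Phi$, a divergence-free vector field $f$ on $\Omega$ pulls back via the Piola transform to a divergence-free $\hat f$ on $B_1$, and, symmetrically, a potential $\hat A$ on $B_1$ with $\curl \hat A = \hat f$ pushes forward via the covariant transform $A = (\nabla\Phi)^{-T}\hat A\circ\Phi^{-1}$ to a potential $A$ on $\Omega$ with $\curl A = f$. These are linear and $L^2$-bounded with constants depending only on $\Phi$. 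Moreover, since $\Phi,\Phi^{-1}$ are Lipschitz on the closures, one has the two-sided bound $d_{\partial\Omega}(\Phi(y)) \asymp 1 - |y|$, so any weighted $L^2$ estimate on $B_1$ transfers to $\Omega$ with the same exponent $\alpha$ (up to constants).

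On $B_1$, I would take the classical radial homotopy operator
\begin{equation*}
K(\hat f)(x) := \left(\int_0^1 t\, \hat f(tx)\, dt\right) \times x.
\end{equation*}
The identity $\curl(u\times x) = 2u + (x\cdot\nabla)u - x\,\dive u$ together with $\frac{d}{dt}\bigl(t^2 \hat f(tx)\bigr) = 2t\,\hat f(tx) + t^2(x\cdot\nabla)\hat f(tx)$ gives $\curl K(\hat f) = \hat f$ whenever $\dive\hat f = 0$; this is the standard vector-calculus form of Poincar\'e's lemma on star-shaped domains.

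The heart of the matter is the weighted $L^2$ bound. Writing $x = r\omega$ with $\omega \in S^2$ and substituting $s = tr$, the elementary pointwise estimate
\begin{equation*}
|K(\hat f)(r\omega)| \le r\int_0^1 t\,|\hat f(tr\omega)|\,dt = r^{-1} G_\omega(r), \qquad G_\omega(r) := \int_0^r s\,|\hat f(s\omega)|\,ds,
\end{equation*}
yields $\int_{B_1}|K(\hat f)|^2\,dx \le \int_{S^2}\int_0^1 G_\omega(r)^2\,dr\,d\omega$. The proof then reduces to the one-dimensional Hardy inequality
\begin{equation*}
\int_0^1 G(r)^2\,dr \le C_\alpha \int_0^1 (1-r)^\alpha\,G'(r)^2\,dr, \qquad G(0) = 0,\ 0\le \alpha <2,
\end{equation*}
applied with $G'(r) = r|\hat f(r\omega)|$, which after integration over $S^2$ produces the desired $\int_{B_1}|K(\hat f)|^2 \le C_\alpha \int_{B_1}(1-|x|)^\alpha |\hat f|^2$.

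The main obstacle is precisely this Hardy inequality, and the restriction $\alpha < 2$ enters through the Muckenhoupt-type condition $\sup_{r \in (0,1)} (1-r) \int_0^r (1-s)^{-\alpha}\,ds < \infty$, which is finite exactly for $\alpha < 2$ and fails at $\alpha = 2$. I would establish it by Cauchy--Schwarz with the weights $(1-s)^{\pm \alpha/2}$, splitting into the case $\alpha \in [0,1)$ (where $\int_0^r(1-s)^{-\alpha}\,ds$ is uniformly bounded, giving the estimate in one shot after one more trivial integration) and the case $\alpha \in [1,2)$ (where $\int_0^r(1-s)^{-\alpha}\,ds \lesssim (1-r)^{1-\alpha}$; after Fubini the dr-integral produces a factor $(1-s)^{2-\alpha}$, leaving $(1-s)^2 h(s)^2$ which is controlled by $(1-s)^\alpha h(s)^2$ by monotonicity on $[0,1]$, valid for $\alpha \le 2$). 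Transferring back through $\Phi$ defines $T_1$ and yields \eqref{lem-extension-2-2}.
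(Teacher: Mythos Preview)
Your argument is correct and essentially identical to the paper's: the same radial homotopy operator on $B_1$ (written there as $F(x)=-\int_0^1 tx\times f(tx)\,dt$), the same Cauchy--Schwarz with weights $(1-s)^{\pm\alpha}$ followed by Fubini, and the same transfer to general $\Omega$ via the Piola and covariant transforms under the diffeomorphism. The only cosmetic differences are that the paper works directly in Cartesian coordinates (using the substitution $y=tx$) rather than passing to polar coordinates and a one-dimensional Hardy inequality, and that it assumes without loss of generality $1<\alpha<2$ from the outset---which, incidentally, also cleans up the borderline $\alpha=1$ in your case split, where $\int_0^r(1-s)^{-1}\,ds=-\log(1-r)$ is not $\lesssim (1-r)^{0}$.
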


\begin{proof} We first consider the case $\Omega = B_1$. Without loss of generality, one can assume that $1 < \alpha < 2$.  We first assume that $f$ is smooth (and $\dive f =0$).  Set 
\begin{equation}\label{def-F}
F (x) = - \int_0^1 t x \times f(t x) \,  dt \mbox{ in } B_1. 
\end{equation}
Using the fact that, for two vector fields $A$ and  $B$ defined in $B_1$,  
$$
\nabla \times (A \times B) = A \dive B - B \dive A  + (B \cdot \nabla ) A - (A \cdot \nabla ) B, 
$$
we have, in $B_1$, 
\begin{equation*}
\nabla \times F (x) =  2 \int_0^1 t f(tx) \, dt + \int_0^1 t (x \cdot \nabla ) f(tx) \, d t =  2 \int_0^1 t f(tx) \, dt + \int_0^1 t^2 \frac{d}{dt} [f(tx)] \, d t = f(x). 
\end{equation*}

We next  prove \eqref{lem-extension-2}.  We have 
\begin{align*}
\int_{B_1} |F|^2 \, dx & \mathop{\le}^{\eqref{def-F}} \int_{B_1} \left( \int_{0}^1 |tx| |f(tx)| \, dt \right)^{2} \, d x \\[6pt] &  \mathop{\le}^{\mbox{{\tiny H\"older}}} \int_{B_1} \left( \int_{0}^1  |tx|^2 |f(tx)|^2 (1 - |tx|)^{\alpha} \, dt \int_{0}^1 (1 - t |x|)^{-\alpha}  \,dt \right) \, d x. 
\end{align*}
In what follows in this proof, $C$ denotes a positive constant depending only on $\alpha$ and $\Omega$.  Since 
\begin{equation*}
\int_0^1 (1 - t |x|)^{-\alpha}  \,dt =  (\alpha -1) |x|^{-1} \Big[ (1 - |x|)^{-\alpha+1} - 1 \Big] \le C(1 - |x|)^{-\alpha+1}, 
\end{equation*}
it follows that 
\begin{align}\label{lem-extension-p1}
\int_{B_1} |F|^2 \, dx  
&\le C \int_{B_1}  \int_{0}^1 |tx|^2  |f(tx)|^2 (1 - |tx|)^{\alpha} \, dt  (1 - |x|)^{-\alpha + 1}   \, d x \\[6pt]
& \mathop{\le}^{y = t x} C \int_{0}^1  \int_{B_{t}} |y|^2 |f(y)|^2 (1 - |y|)^{\alpha}   (t - |y|)^{-\alpha + 1}  t^{\alpha-4} \, d y \, dt  \nonumber  \\[6pt]
& \mathop{\le}^{\mathrm{Fubini}} C \int_{B_{1}}  |y|^2 |f(y)|^2 (1 - |y|)^{\alpha}   \int_{y}^{1} (t - |y|)^{-\alpha + 1}  t^{\alpha - 4} \, d t \, dy  \nonumber . 
\end{align}
Since $\alpha < 2$, we have 
\begin{align}\label{lem-extension-p2}
\int_{y}^{1} (t - |y|)^{-\alpha + 1}  t^{\alpha - 4} \, d t = &  \frac{1}{2 - \alpha}(t - |y|)^{-\alpha + 2} t^{\alpha - 4}\Big|_y^1 +  \frac{1}{(2 - \alpha)(4 - \alpha)} \int_{y}^{1}(t - |y|)^{-\alpha + 2} t^{\alpha - 5} \, dt   \\[6pt]
\le & C + C \int_y^1 t^{-3} \le C y^{-2}.  \nonumber
\end{align}
Combining \eqref{lem-extension-p1} and \eqref{lem-extension-p2} yields 
\begin{equation}\label{lem-extension-2-2-1}
\int_{B_1} |F|^2 \, dx
\le C \int_{B_{1} }  |f(y)|^2 (1 - |y|)^{\alpha}  \, dy, 
\end{equation}
which is \eqref{lem-extension-2-2}.  The conclusion for a general $f$ now follows by a standard approximation process.

\medskip 
We now consider a general $\Omega$. 
Let ${\mathcal T}: \Omega \to B_1$ be a diffeomorphism.  Set, for $x' \in B_1$,  
$$
f'(x') = \frac{\nabla {\mathcal T}(x)}{J(x)} f(x) \mbox{ with } x' = {\mathcal T} (x) \mbox{ and } J(x) = \det {\mathcal T}(x)   
$$
and 
$$
F'(x') = - \int_0^1 t x' \times f' (t x') \,  dt. 
$$
Since  (see, e.g.,   \cite[Lemma 3.59]{monk-book-maxwell}), 
$$
\nabla' \cdot f'(x') = \frac{\nabla \cdot f(x)}{J(x)} = 0 \mbox{ in } B_1, 
$$
it follows from the case $\Omega = B_1$ that 
$$
\nabla' \times F' (x') = f'(x') \mbox{ in } B_1.  
$$
Set, for $x \in \Omega$,  
$$
T_1(f)(x) = \nabla {\mathcal T}^{T}(x) F'(x') \mbox{ with } x' = {\mathcal T} (x). 
$$
Then (see, e.g.,  \cite[Corollary 3.58]{monk-book-maxwell})
$$
\nabla \times T_1(f)(x) = f(x) \mbox{ for } x \in \Omega. 
$$
Assertion~\eqref{lem-extension-2-2} now follows from \eqref{lem-extension-2-2-1}. 
\end{proof}

\begin{remark} \rm The  definition of $F$ in \eqref{def-F} is an explicit formula for solving the Poincar\'{e} lemma in a star-shaped domain taken from a note of Dacorogna \cite{Dacorogna1}. 
\end{remark}

In what follows, we denote $\mR^3_+  = \{(x_1, x_2, x_3) \in \mR^3; x_3 > 0\}$, 
$\mR^3_0=   \{(x_1, x_2, x_3) \in \mR^3; x_3 = 0\}$,  $Q = (-1, 1)^3$, $Q_{+} = Q \cap \mR^3_+$, and  $Q_{0}= Q  \cap \mR^3_0$.  The proof of Lemma~\ref{lem-extension-2} can be used to obtain 

\begin{lemma}\label{lem-extension-3} Let $0 \le \alpha < 2$ and let $\Omega$ be an open bounded subset of $\mR^3$ and let ${\mathcal H} : \Omega \to Q_+$ be a diffeomorphism.  Denote $S = \partial Q_+  \setminus Q_0$ and let $U \subset \mR^3$ be a neighborhood of $\bar S$. 
There exist  an open set $V \subset \mR^3$ containing ${\mathcal H}^{-1}(\bar S)$ and a linear operator 
$$
T_1: \Big\{f \in H(\dive, \Omega); \,  \dive f = 0 \mbox{ in } \Omega \Big\} \to H(\curl, \Omega) 
$$
such that 
\begin{equation*}
\nabla \times T_1(f) = f \mbox{ in } \Omega, 
\end{equation*}
\begin{equation} \label{lem-extension-2-2}
\int_{\Omega} |T_1 (f)|^2 \, dx  \le C \int_{\Omega}  d_{\partial \Omega}^\alpha |f|^2 \, dx, 
\end{equation}
for some positive constant $C = C(\alpha, \Omega)$ independent of $f$,  and 
\begin{equation*}
T_1(f) = 0 \mbox{ in } V \cap \Omega \mbox{ if }  f= 0 \mbox{ in } {\mathcal H}^{-1} (U \cap Q_+). 
\end{equation*}
\end{lemma}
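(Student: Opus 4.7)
The plan is to refine the construction of Lemma \ref{lem-extension-2} by subtracting a gradient chosen so as to cancel the primitive near $\mathcal{H}^{-1}(\bar S)$. By the covariant change of variables used at the end of the proof of Lemma \ref{lem-extension-2}, it suffices to treat $\Omega = Q_+$ with $\mathcal{H} = \mathrm{id}$, and to pull back afterwards. For divergence-free $f' \in H(\dive, Q_+)$, I would apply Lemma \ref{lem-extension-2} (valid since $Q_+$ is diffeomorphic to $B_1$) to obtain $F' \in H(\curl, Q_+)$ satisfying
$$
\nabla \times F' = f' \text{ in } Q_+, \qquad \|F'\|_{L^2(Q_+)}^2 \le C \int_{Q_+} d_{\partial Q_+}^{\alpha} |f'|^2.
$$

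Next, shrink $U$ to a smaller open neighborhood $\tilde U \subset U$ of $\bar S$ such that $\tilde U \cap Q_+$ is simply connected; this is possible because $\bar S = \partial Q_+ \setminus Q_0$ is homeomorphic to a closed $2$-disk (the boundary of the half-cube minus its open bottom face), so a sufficiently thin one-sided collar of $\bar S$ inside $\bar Q_+$ retracts onto $\bar S$. Let $\phi' \in H^1(\tilde U \cap Q_+)$ be the unique zero-mean function such that $\nabla \phi'$ equals the $L^2$-orthogonal projection of $F'|_{\tilde U \cap Q_+}$ onto the closed subspace of gradients of zero-mean $H^1$ functions. Then $\phi'$ depends linearly and continuously on $F'$, with
$$
\|\phi'\|_{L^2(\tilde U \cap Q_+)} + \|\nabla \phi'\|_{L^2(\tilde U \cap Q_+)} \le C \|F'\|_{L^2(Q_+)}
$$
by the Poincar\'e inequality. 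When $f' = 0$ on $U \cap Q_+$, the field $F'$ is curl-free on the simply connected domain $\tilde U \cap Q_+$, hence lies in the aforementioned closed subspace, so $\nabla \phi' = F'$ there.

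Now fix an open set $V' \subset \mathbb{R}^3$ with $\bar S \subset V'$ and $\overline{V'} \subset \tilde U$, together with a smooth cutoff $\eta \in C_c^\infty(\tilde U)$ equal to $1$ on $V'$, and set
$$
T_1''(f') := F' - \nabla(\eta \phi') \text{ in } Q_+,
$$
extending $\eta \phi'$ by zero outside its natural domain. By construction $\nabla \times T_1''(f') = f'$ in $Q_+$; expanding $\nabla(\eta \phi') = \eta \nabla \phi' + \phi' \nabla \eta$ and invoking the previous estimate gives $\|T_1''(f')\|_{L^2(Q_+)} \le C \|F'\|_{L^2(Q_+)}$; and when $f' = 0$ on $U \cap Q_+$, the identity $\nabla \phi' = F'$ on $\tilde U \cap Q_+$ yields $\nabla(\eta \phi') = F'$ on $V' \cap Q_+$, so $T_1''(f') = 0$ there. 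Pulling back by $\mathcal{H}$ produces the required linear operator $T_1$ on $\Omega$; one takes $V$ to be any open neighborhood of $\mathcal{H}^{-1}(\bar S)$ in $\mathbb{R}^3$ with $V \cap \Omega \subset \mathcal{H}^{-1}(V' \cap Q_+)$, which exists by the $C^1$ regularity of $\mathcal{H}$ up to the boundary. The only non-routine ingredient is the topological claim that $\tilde U \cap Q_+$ may be chosen simply connected; the rest reduces to the Helmholtz projection together with the primitive already supplied by Lemma \ref{lem-extension-2}.
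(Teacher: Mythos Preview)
Your argument is correct but follows a different route from the paper. Rather than working in $Q_+$ and correcting the primitive afterward, the paper composes $\mathcal{H}$ with a bi-Lipschitz map $\mathcal{T}_1 : Q_+ \to B_1$ chosen so that $\bar S$ is sent to the closed upper hemisphere $\Sigma$ and the image of $U \cap Q_+$ contains the radial cone $\mathcal{C}_1 = \{r\sigma : r \in (0,1),\ \sigma \in \Sigma_1\}$ over a slightly larger cap $\Sigma_1 \supset \bar\Sigma$. The explicit Poincar\'e formula $F'(x') = -\int_0^1 t\,x' \times f'(tx')\,dt$ in $B_1$ then vanishes automatically on $\mathcal{C}_1$ whenever $f'$ does, since it samples $f'$ only along the radial segment from $0$ to $x'$; no gradient subtraction, cutoff, or collar topology is needed, and $V$ is simply $\mathcal{H}^{-1}\big(\mathcal{T}_1^{-1}(\mathcal{C}_1)\big)$. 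Your approach---take a primitive from Lemma~\ref{lem-extension-2} as a black box, then subtract $\nabla(\eta\phi')$ with $\nabla\phi'$ the Helmholtz projection of $F'$ onto gradients over a simply connected collar of $\bar S$---is more functional-analytic and does not rely on the specific shape of the primitive, at the cost of the extra topological verification and the projection/cutoff machinery. One technical quibble: Lemma~\ref{lem-extension-2} as stated asks for a $C^1$ diffeomorphism to $B_1$, which $Q_+$ with its corners does not admit; you should either remark that its proof goes through verbatim under bi-Lipschitz maps (this is exactly why the paper uses a bi-Lipschitz $\mathcal{T}_1$ rather than a diffeomorphism), or transport your whole construction to $B_1$ via the composite $\mathcal{T}_1 \circ \mathcal{H}$ from the outset.
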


\begin{proof} Without loss of generality, one can assume that $1 < \alpha < 2$. Let  ${\mathcal T}_1: Q_+ \to B_1$ be a bi-Lipschitz homeomorphism  such that ${\mathcal T}_1 (S) = \Sigma$  and $\bar {\mathcal C} \subset {\mathcal T}_1( U \cap Q_+)$, where $\Sigma  :=  \partial B_1 \cap  \bar \mR^3_+$ and  ${\mathcal C} : =  \big\{r \sigma; r \in (0, 1) \mbox{ and } \sigma \in \Sigma \big\}$. Let $\Sigma_1 \subset \partial B_1$ be an open set of $ \partial B_1$ such that $\bar \Sigma \subset \Sigma_1$ and  $\bar {\mathcal C}_1 \subset {\mathcal T}_1( U \cap Q_+)$, where ${\mathcal C}_1 =  \big\{r \sigma; r \in (0, 1) \mbox{ and } \sigma \in \Sigma_1 \big\}$.

 Let ${\mathcal T}: \Omega \to B_1$ be defined by 
$$
{\mathcal T} = {\mathcal T}_1 \circ {\mathcal H}. 
$$
As in the proof  of Lemma~\ref{lem-extension-2}, set, for $x' \in B_1$,  
$$
f'(x') = \frac{\nabla {\mathcal T}(x)}{J(x)} f(x) \mbox{ with } x' = {\mathcal T} (x) \mbox{ and } J(x) = \det {\mathcal T}(x), 
$$
and 
\begin{equation}\label{def-F'}
F'(x') = - \int_0^1 t x' \times f' (t x') \,  dt. 
\end{equation}
Define, in $\Omega$, 
$$
T_1(f)(x) = \nabla {\mathcal T}^{T}(x) F'(x') \mbox{ with } x' = {\mathcal T} (x). 
$$
As in the proof of Lemma~\ref{lem-extension-2}, we have
\begin{equation*}
\nabla \times T_1(f) = f \mbox{ in } \Omega
\end{equation*}
and 
\begin{equation*}
\int_{\Omega} |T_1 (f)|^2 \, dx  \le C \int_{\Omega}  d_{\partial \Omega}^\alpha |f|^2 \, dx. 
\end{equation*}

Let $V$ be an open subset of $\mR^3$ such that ${\mathcal H}^{-1}(\bar S) \subset V$ and $V \cap \Omega = {\mathcal H}^{-1} ({\mathcal T}^{-1}({\mathcal C}_1))$. Such a $V$ exists by the choice of ${\mathcal C_1}$. 
Assume that $f = 0 $ in ${\mathcal H}^{-1} (U \cap Q_+)$. Then $f'(x') = 0$ for $x' \in {\mathcal C}_1$. It follows from \eqref{def-F'} that $F'(x') = 0$ for  $x' \in {\mathcal C}_1$.  This implies that $T_1(f) = 0$ in ${\mathcal H}^{-1} ({\mathcal T}^{-1}({\mathcal C}_1)) = V \cap \Omega$. The proof is complete. 
\end{proof}

In what follows, we use several times the following simple compactness result whose proof is omitted.  

\begin{lemma} \label{lem-Inter} Let $\Omega$ be a bounded open subset of $\mR^d$ of class $C^1$ and  let $\gamma_1 < \gamma_2$. Assume that $(u_n) \subset L^2_{\loc}(\Omega)$ is such that $u_n \to u$ in $L^2_{\loc}(\Omega)$ as $n \to + \infty$ and 
$$
\sup_{n} \| u_n\|_{L^2(d_{\partial \Omega}^{\gamma_1}, \Omega)} < + \infty. 
$$
Then $u_n \to u $ in $L^2(d_{\partial \Omega}^{\gamma_2}, \Omega)$ as $n \to + \infty$. 
\end{lemma}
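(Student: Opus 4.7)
The plan is a standard boundary-layer splitting argument: break $\Omega$ into the interior region $\{d_{\partial\Omega} > \epsilon\}$, where $L^2_{\loc}$ convergence applies directly, and a thin neighborhood $\{d_{\partial\Omega} \le \epsilon\}$ of $\partial\Omega$, where the uniform bound in the stronger weight $d_{\partial\Omega}^{\gamma_1}$ absorbs the weaker weight $d_{\partial\Omega}^{\gamma_2}$ thanks to the hypothesis $\gamma_2 > \gamma_1$.

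First I would verify that the limit $u$ itself lies in $L^2(d_{\partial\Omega}^{\gamma_1},\Omega)$. The $L^2_{\loc}$ convergence $u_n \to u$ yields, along a subsequence, pointwise a.e.\ convergence on $\Omega$ (via a diagonal argument over a compact exhaustion). Applying Fatou's lemma to $|u_n|^2 d_{\partial\Omega}^{\gamma_1}$ then gives
\begin{equation*}
\|u\|_{L^2(d_{\partial\Omega}^{\gamma_1},\Omega)} \le \liminf_{n \to +\infty} \|u_n\|_{L^2(d_{\partial\Omega}^{\gamma_1},\Omega)} \le M,
\end{equation*}
where $M := \sup_n \|u_n\|_{L^2(d_{\partial\Omega}^{\gamma_1},\Omega)} < +\infty$. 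Setting $v_n := u_n - u$, I then have $v_n \to 0$ in $L^2_{\loc}(\Omega)$ and $\sup_n \|v_n\|_{L^2(d_{\partial\Omega}^{\gamma_1},\Omega)} \le 2M$.

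For $\epsilon > 0$ small I would set $\Omega_\epsilon := \{x \in \Omega : d_{\partial\Omega}(x) > \epsilon\}$, which is compactly contained in $\Omega$, and split
\begin{equation*}
\int_\Omega |v_n|^2 d_{\partial\Omega}^{\gamma_2}\,dx
= \int_{\Omega_\epsilon} |v_n|^2 d_{\partial\Omega}^{\gamma_2}\,dx + \int_{\Omega \setminus \Omega_\epsilon} |v_n|^2 d_{\partial\Omega}^{\gamma_2}\,dx.
\end{equation*}
On $\Omega_\epsilon$ the weight $d_{\partial\Omega}^{\gamma_2}$ is bounded by a constant $C(\epsilon,\gamma_2,\Omega)$ (using $\epsilon \le d_{\partial\Omega} \le \operatorname{diam}(\Omega)$), so the first integral is dominated by $C(\epsilon,\gamma_2,\Omega) \int_{\Omega_\epsilon}|v_n|^2\,dx$, which tends to $0$ as $n \to +\infty$ by $L^2_{\loc}$ convergence. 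For the boundary layer, the elementary inequality
\begin{equation*}
d_{\partial\Omega}^{\gamma_2} = d_{\partial\Omega}^{\gamma_2 - \gamma_1}\, d_{\partial\Omega}^{\gamma_1} \le \epsilon^{\gamma_2 - \gamma_1} d_{\partial\Omega}^{\gamma_1} \quad \text{on } \{d_{\partial\Omega} \le \epsilon\},
\end{equation*}
valid because $\gamma_2 - \gamma_1 > 0$, yields
\begin{equation*}
\int_{\Omega \setminus \Omega_\epsilon} |v_n|^2 d_{\partial\Omega}^{\gamma_2}\,dx \le \epsilon^{\gamma_2 - \gamma_1} \int_\Omega |v_n|^2 d_{\partial\Omega}^{\gamma_1}\,dx \le 4 M^2 \epsilon^{\gamma_2 - \gamma_1}.
\end{equation*}
Sending first $n \to +\infty$ and then $\epsilon \to 0^+$ gives the desired convergence. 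There is no real obstacle: the hypothesis $\gamma_2 > \gamma_1$ is precisely what makes the boundary tail small uniformly in $n$, and the only mild subtlety is the Fatou step that legitimises subtracting $u$ inside the weighted norm.
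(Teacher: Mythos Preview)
Your argument is correct. The paper itself omits the proof of this lemma, calling it a ``simple compactness result whose proof is omitted,'' so there is no proof in the paper to compare against; your boundary-layer splitting is precisely the natural elementary argument one would expect here.
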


Here and in what follows, for a non-negative,  measurable function $g$ defined in $\Omega$, one denotes, for $u \in L^2_{\loc}(\Omega)$, 
$$
\| u\|_{L^2(g, \Omega)} = \left( \int_{\Omega} g |u|^2 \right)^{1/2}. 
$$
For $(u_n) \subset L^2_{\loc}(\Omega)$ and $u \in L^2_{\loc}(\Omega)$, one says that $u_n \to u$ in $L^2(g, \Omega)$ as $n \to + \infty$ if $\lim_{n \to + \infty} \| u_n - u \|_{L^2(g, \Omega)} = 0$. 

Let $\Omega$ be a bounded open subset of $\mR^3$ with Lipschitz boundary. We denote 
\begin{equation*}
H_{0} (\curl, \Omega) = \Big\{u \in H(\curl, \Omega); u \times \nu = 0 \mbox{ on } \partial \Omega \Big\}.   
\end{equation*}
In the proof of Lemma~\ref{lem-M2}, we also use the following form of Poincar\'e's lemma:

\begin{lemma} \label{lem-curl-2}  Let  $\Omega$ be a  simply connected,  bounded, open subset of $\mR^3$ of class $C^2$. There exists a linear continuous transformation $T_0: H_{0} (\curl, \Omega) \to [H_0^1(\Omega)]^3$ such that 
\begin{equation*}
\nabla \times T_0(u)  = \nabla \times  u \mbox{ in } \Omega.
\end{equation*}
Consequently, for every $s > 0$, for every $0 \le \alpha < 2$, and for every $\beta \ge 0$,  there exists a constant $C_s = C(s, \alpha, \beta,  \Omega) > 0$ such that 
\begin{equation}\label{lem-curl-2-claim}
\| T_0(u) \|_{L^2(d_{\partial \Omega}^{-\alpha} \Omega)} \le s \|  u \|_{H(\curl, \Omega)} + C_s \| u \|_{L^2(d_{\partial \Omega}^\beta, \Omega)}. 
\end{equation}
\end{lemma}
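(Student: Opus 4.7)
To construct $T_0$, my plan is to reduce to Lemma~\ref{lem-curl} by extension by zero. For $u \in H_0(\curl,\Omega)$, the boundary condition $u \times \nu = 0$ on $\partial\Omega$ ensures that the extension $\tilde u$ of $u$ by zero to any bounded open set $\tilde\Omega \supset \bar\Omega$ lies in $H(\curl,\tilde\Omega)$. I would choose $\tilde\Omega$ to be a bounded, simply connected, $C^1$ tubular thickening of $\Omega$; since $\Omega$ is simply connected with $C^2$ boundary, each component of $\partial\Omega$ is a topological sphere, so by taking $\tilde\Omega$ sufficiently close to $\Omega$ each component of $\tilde\Omega\setminus\bar\Omega$ is simply connected. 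Set $\tilde{\mathbf v} := T(\tilde u) \in [H^1(\tilde\Omega)]^3$ from Lemma~\ref{lem-curl}; since $\nabla\times\tilde{\mathbf v} = 0$ on $\tilde\Omega\setminus\bar\Omega$, on each simply connected component there exists $\phi \in H^2$ with $\tilde{\mathbf v} = \nabla\phi$. Extending the union of these $\phi$'s to $\Phi \in H^2(\tilde\Omega)$ by a standard $H^2$ extension, I set $T_0 u := (\tilde{\mathbf v} - \nabla\Phi)|_\Omega$. Then $T_0 u \in [H^1(\Omega)]^3$ satisfies $\nabla\times T_0 u = \nabla\times u$, and since its inside trace on $\partial\Omega$ coincides with the outside trace (where the function vanishes), $T_0 u \in [H^1_0(\Omega)]^3$. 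Linearity and continuity of $T_0$ are inherited from those of the building blocks.

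For the estimate~\eqref{lem-curl-2-claim}, I would combine Hardy's inequality, a H\"older interpolation, and an Ehrling-type compactness argument. Since $T_0 u \in [H^1_0(\Omega)]^3$, Hardy's inequality gives
\[
\|T_0 u / d_{\partial\Omega}\|_{L^2(\Omega)} \;\le\; C \|\nabla T_0 u\|_{L^2(\Omega)} \;\le\; C'\|u\|_{H(\curl,\Omega)}.
\]
For $0 < \alpha < 2$, H\"older's inequality with conjugate exponents $2/(2-\alpha)$ and $2/\alpha$ yields
\[
\|T_0 u\|_{L^2(d_{\partial\Omega}^{-\alpha},\Omega)}^2 \;\le\; \|T_0 u\|_{L^2(\Omega)}^{2-\alpha}\,\|T_0 u/d_{\partial\Omega}\|_{L^2(\Omega)}^{\alpha},
\]
and a weighted Young's inequality then gives, for every $s>0$,
\[
\|T_0 u\|_{L^2(d_{\partial\Omega}^{-\alpha},\Omega)} \;\le\; \tfrac{s}{2}\|u\|_{H(\curl,\Omega)} + M(s)\,\|T_0 u\|_{L^2(\Omega)}
\]
(the case $\alpha=0$ reduces directly to the next step). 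It therefore suffices to show that for every $s'>0$,
\[
\|T_0 u\|_{L^2(\Omega)} \;\le\; s'\|u\|_{H(\curl,\Omega)} + \tilde C(s')\,\|u\|_{L^2(d_{\partial\Omega}^\beta,\Omega)},
\]
which I would establish by contradiction. If this failed, there would exist $u_n \in H_0(\curl,\Omega)$ with $\|T_0 u_n\|_{L^2}=1$, $\|u_n\|_{H(\curl)}$ bounded, and $\|u_n\|_{L^2(d^\beta)}\to 0$. Passing to a subsequence, $u_n \rightharpoonup u$ in $H(\curl,\Omega)$; testing against $\varphi \in C_c^\infty(\Omega)$ (where $d_{\partial\Omega}\ge c>0$) and using Cauchy--Schwarz with the $L^2(d^\beta)$-smallness forces $u=0$. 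Continuity of $T_0$ then gives $T_0 u_n \rightharpoonup 0$ in $[H^1_0(\Omega)]^3$, whence Rellich's compactness theorem yields $T_0 u_n \to 0$ strongly in $[L^2(\Omega)]^3$, contradicting $\|T_0 u_n\|_{L^2}=1$. Combining the two inequalities with $s' = s/(2M(s))$ yields \eqref{lem-curl-2-claim}.

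The main obstacle lies in the first part: one needs each component of the exterior collar $\tilde\Omega\setminus\bar\Omega$ to be simply connected so that the curl-free field $\tilde{\mathbf v}$ can be written as a gradient there. This rests on the nonobvious but classical fact that a bounded simply connected $C^2$ domain in $\mathbb{R}^3$ has spherical boundary components; if one wished to avoid this, a partition of unity on $\partial\Omega$ combined with local charts could replace the global argument. The quantitative estimate itself is a soft interpolation/compactness argument once the construction of $T_0$ is in place.
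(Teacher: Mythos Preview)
Your proof is correct. The two parts compare to the paper as follows.

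For the existence of $T_0$, the paper simply cites \cite[Theorem 3.6]{Girault} and \cite[Theorem 8.16]{CsatoDacKneuss} without giving a construction. Your route---extend by zero, apply Lemma~\ref{lem-curl} on a thickened domain, and subtract a gradient obtained on the exterior collar---is a legitimate explicit construction. The topological input you flag (that boundary components of a bounded simply connected $C^2$ domain in $\mR^3$ are spheres, so the collar components are simply connected) is genuinely needed and correctly identified; it is exactly the kind of fact absorbed into the cited references. Your construction has the mild advantage of being self-contained modulo Lemma~\ref{lem-curl}, at the cost of invoking that topological fact or the local-chart workaround you mention.

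For the estimate~\eqref{lem-curl-2-claim}, both arguments are the same soft compactness scheme: Hardy's inequality to control the $L^2(d_{\partial\Omega}^{-2})$ norm of $T_0u$, then a contradiction argument exploiting that $T_0$ is linear, continuous, and takes values in $H^1_0$ so Rellich applies. The only organizational difference is that you first interpolate via H\"older/Young between $L^2$ and $L^2(d_{\partial\Omega}^{-2})$ to reduce to an $L^2$ estimate and then contradict, whereas the paper runs the contradiction directly on the weighted norm and invokes Lemma~\ref{lem-Inter} (boundedness in $L^2(d^{-2})$ plus $L^2_{\loc}$ convergence yields $L^2(d^{-\alpha})$ convergence for $\alpha<2$). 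These are equivalent in content; your interpolation is the explicit version of what Lemma~\ref{lem-Inter} encapsulates.
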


\begin{proof} We only prove \eqref{lem-curl-2-claim} and we prove it by contradiction.  Assertion \eqref{lem-curl-2} is known, see e.g. \cite[Theorem 3.6]{Girault}, \cite[Theorem 8.16]{CsatoDacKneuss}. 
Assume that \eqref{lem-curl-2-claim} does not hold. There exists a sequence
of $(u_n) \subset H_0(\curl,  \Omega)$ such that 
\begin{equation}\label{lem-curl-2-e1}
1= \| T_0(u_n) \|_{L^2(d_{\partial \Omega}^{-\alpha}, \Omega)} \ge s \|  u_n \|_{H(\curl, \Omega)}^2 + n \| u_n \|_{L^2(d_{\partial \Omega}^\beta, \Omega)}^2. 
\end{equation}
Since $T_0$ is continuous, it follows that  the sequence 
$ \big( \| T_0(u_n) \|_{H^1(\Omega)} \big)$ is bounded. Without loss of generality, one may assume that $T_0(u_n) \to v$ in $[L^2(\Omega)]^3$ and weakly in $[H^1(\Omega)]^3$. By Hardy's inequality
\begin{equation*}
\int_{\Omega} d_{\partial \Omega}^2 |v |^{-2} \le C_{\Omega} \int_{\Omega}  |\nabla v |^2, 
\end{equation*}
we derive that
\begin{equation*}
\int_{\Omega} d_{\partial \Omega}^{-2} |T_0(u_n) |^2 \le C. 
\end{equation*}
Since $0 \le \alpha < 2$, by Lemma~\ref{lem-Inter}, without loss of generality, one may assume that $T_0(u_n) \to v$ in $[L^2(d_{\partial \Omega}^{-\alpha}, \Omega)]^3$.

From \eqref{lem-curl-2-e1}, one may assume as well that $u_n \rightharpoonup 0$ weakly in $H_0(\curl, \Omega)$. Since $T_0$ is linear and continuous, this in turn implies that $v = T_0 ( \lim_{n \to + \infty}  u_n)  = T_0 (0) = 0 $. This contradicts  the fact $\|v \|_{L^2(d_{\partial \Omega}^{-\alpha}, \Omega)} = \lim_{n \to + \infty}\| T_0(u_n) \|_{L^2(d_{\partial \Omega}^{-\alpha}, \Omega)} = 1$. Therefore, \eqref{lem-curl-2-claim} holds.  \end{proof}

We now  present the key ingredient of the proof of Theorem~\ref{thm2}, which is a variant of Lemma~\ref{lem-compact-1}. 

\begin{lemma}\label{lem-M2} Let  $\tau > 0$, $0 < \delta < 1$,  $s>0$, and let $\Omega$ be a connected component of $D_{-\tau}$. Let $\eps, \; \heps, \; \mu, \; \hmu$  be real,  symmetric,   uniformly elliptic matrix-valued functions
defined   in $\Omega$. 
Let   $J_{e}, J_{m},  \hJ_{e}, \hJ_{m}  \in [L^2(\Omega)]^3$  and  $(E, H), (\hE, \hH) \in [H(\curl, \Omega)]^2$ be such that 
%$$
%\supp J_{e}, \supp J_{m}, \supp \hJ_{e}, \supp \hJ_{m} \subset  \bar \Omega,
%$$
 \begin{equation}\label{lem-M2-sys}
\left\{ \begin{array}{cl}
\nabla \times E = i \omega    \mu H + J_{e} & \mbox{ in } \Omega, \\[6pt]
\nabla \times H = - i \omega  \eps E + J_{m}  & \mbox{ in } \Omega, 
\end{array}\right. \quad \quad 
\left\{ \begin{array}{cl}
\nabla \times \hE = i \omega \hmu  \hH + \hJ_{e} & \mbox{ in } \Omega, \\[6pt]
\nabla \times \hH = - i \omega   \heps  \hE + \hJ_{m} & \mbox{ in } \Omega, 
\end{array}\right. 
\end{equation} 
and 
\begin{equation}\label{lem-M2-bdry}
E \times \nu = \hE \times \nu  \quad \mbox{ and } \quad H \times \nu = \hH \times \nu  \mbox{ on } \partial \Omega \cap \Gamma. 
\end{equation}
We have 
\begin{enumerate}
\item[1)] if, for some $c>0$ and $0 \le \alpha_1 < 2$, 
\begin{equation*}
\heps - \eps \ge c d_\Gamma^{\alpha_1} I   \mbox { in } \Omega  \quad \mbox{ or } \quad  \eps - \heps \ge c d_\Gamma^{\alpha_1} I    \mbox { in } \Omega
\end{equation*}
then, for every $s>0$ and for every $0 \le \beta  < 2$, there exists a constant $C_{s} > 0$, depending only on $s$, $\beta$, $c$,  $\alpha_1$,  $\Omega$, and  the ellipticity of $\eps, \,  \heps, \,  \mu, \,  \hmu$  such that 
\begin{multline}\label{lem-M2-assertion1}
\int_{\Omega} | \langle (\eps - \heps) E, E \rangle| + |E - \hE|^2 
 \le  s  \int_{\Omega} |\mu H - \hmu \hH|^2 + C_s  \int_{\Omega} |J_m - \hJ_m| | \hE|\\[6pt]
+  C_{s}  \int_{\Omega} d_{\Gamma}^{\beta} |(E, \hE, H, \hH)|^2  + |(J_{e}, \hJ_e, J_{m}, \hJ_m)|^2; 
\end{multline}
\item[2)] if, for some $c>0$ and $0 \le \alpha_2 < 2$, 
\begin{equation*}
\hmu - \mu \ge c d_\Gamma^{\alpha_1} I  \mbox { in } \Omega \quad \mbox{ or } \quad  \mu - \hmu  \ge c d_\Gamma^{\alpha_1} I  \mbox { in } \Omega
\end{equation*}
then, for every $s>0$ and for every $0 \le \beta  < 2$, there exists a constant $C_{s}> 0$, depending only on $s$, $\beta$,  $c$,  $\alpha_2$, $\Omega$, and  the ellipticity of $\eps, \, \heps, \, \mu, \, \hmu$ such that 
\begin{multline}\label{lem-M2-assertion2}
\int_{\Omega} | \langle (\mu - \hmu) H, H \rangle| + |H - \hH|^2
 \le s \int_{\Omega} |\eps E - \heps \hE|^2  + C_s  \int_{\Omega} |J_e - \hJ_e| | \hH|\\[6pt]
 + C_{s} \int_{\Omega} d_{\Gamma}^{\beta} |(E, \hE, H, \hH)|^2  + |(J_{e}, \hJ_e, J_{m}, \hJ_m)|^2.
\end{multline}
\end{enumerate}
\end{lemma}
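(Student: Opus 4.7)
The plan is to prove assertion 1); assertion 2) follows by the symmetric role-swap $(\eps, \heps, E, \hE, J_m) \leftrightarrow (\mu, \hmu, H, \hH, J_e)$. Subtracting the two second equations in \eqref{lem-M2-sys} gives
\begin{equation*}
\nabla \times (H - \hH) = -i\omega (\eps E - \heps \hE) + (J_m - \hJ_m),
\end{equation*}
while subtracting the first equations yields $\nabla \times (E - \hE) = i\omega (\mu H - \hmu \hH) + (J_e - \hJ_e)$. The strategy is to multiply the first identity by a complex-conjugated test field built from $E - \hE$ and integrate over $\Omega$. The matching condition \eqref{lem-M2-bdry} ensures that both $E - \hE$ and $H - \hH$ have vanishing tangential traces on $\Gamma \cap \partial \Omega$, so the boundary integrals produced by integration by parts vanish on that portion of $\partial \Omega$.

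To eliminate the remaining boundary $\partial \Omega \setminus \Gamma$ (where $d_\Gamma = \tau$), introduce a cutoff $\eta \in C^\infty(\bar \Omega; [0, 1])$ equal to $1$ on $\{d_\Gamma \le \tau/3\}$ and supported in $\{d_\Gamma \le 2\tau/3\}$, and test with $\eta \overline{(E - \hE)}$. The commutator $\int_\Omega (\nabla \eta \times (H - \hH)) \cdot \overline{(E - \hE)}$ is supported where $d_\Gamma \ge \tau/3$ and is therefore dominated by $C_s \int d_\Gamma^\beta |(\cdot)|^2$ for any $\beta \in [0, 2)$. The algebraic identity driving the estimate is the symmetric splitting
\begin{equation*}
\eps E - \heps \hE = \tfrac{1}{2}(\eps + \heps)(E - \hE) + \tfrac{1}{2}(\eps - \heps)(E + \hE).
\end{equation*}
Substituted into the tested identity and followed by taking real parts, the uniform ellipticity of $\eps + \heps$ delivers $\tfrac{1}{2} \int \langle (\eps + \heps)(E - \hE), \overline{E - \hE} \rangle \ge c \int |E - \hE|^2$, while the sign hypothesis $\pm(\eps - \heps) \ge c d_\Gamma^{\alpha_1} I$ makes $\int \langle (\eps - \heps) E, \overline{E} \rangle$ real with a definite sign and hence equal in modulus to $\int | \langle (\eps - \heps) E, E \rangle |$, up to a cross-term $\int \langle (\eps - \heps) \hE, \overline{\hE} \rangle$ absorbable into $C_s \int d_\Gamma^{\alpha_1} |\hE|^2$. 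On the right-hand side, the splitting $\mu (H - \hH) = (\mu H - \hmu \hH) - (\mu - \hmu) \hH$ combined with Young's inequality of parameter $s$ produces the principal term $s \int |\mu H - \hmu \hH|^2$ balanced by $C_s$ lower-order remainders. The source contribution $\int (J_m - \hJ_m) \cdot \overline{(E - \hE)}$ is split as $\int (J_m - \hJ_m) \cdot \overline{E} - \int (J_m - \hJ_m) \cdot \overline{\hE}$; the $\overline{E}$-piece is folded back using the differenced Amp\`ere law $(J_m - \hJ_m) = \nabla \times (H - \hH) + i\omega (\eps E - \heps \hE)$ and a further integration by parts, leaving exactly the stated $\int |J_m - \hJ_m| |\hE|$.

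The principal obstacle is upgrading the residual unweighted terms of the form $C \int |\hE|^2 + C \int |\hH|^2$ to the sharper weighted form $C_s \int d_\Gamma^\beta |(\cdot)|^2$ required in \eqref{lem-M2-assertion1} for arbitrary $\beta \in [0, 2)$. The Poincar\'e-type lemmas with gain in integrability and regularity (Lemmas \ref{lem-extension-2}, \ref{lem-extension-3}, \ref{lem-curl-2}) are designed precisely for this: applied to $\eta (E - \hE) \in H_0(\curl, \Omega)$, Lemma \ref{lem-curl-2} yields a decomposition $\eta (E - \hE) = \nabla \varphi + T_0(\eta (E - \hE))$, and \eqref{lem-curl-2-claim} trades the weighted $L^2(d_\Gamma^{-\alpha}, \Omega)$ control of $T_0(\eta (E - \hE))$ against a small $s \| E - \hE \|_{H(\curl, \Omega)}$, which through the differenced Faraday equation becomes $s \int |\mu H - \hmu \hH|^2$ modulo source terms, plus a $C_s \|E - \hE\|_{L^2(d_\Gamma^\beta, \Omega)}$ remainder that the $|E - \hE|^2$ on the left can reabsorb. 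The gradient part $\nabla \varphi$ integrates against the curl in the tested identity to give a scalar divergence pairing, handled using $\dive(\eps E) = (i\omega)^{-1} \dive J_m$. Since $\Omega$ is only a connected component of a tubular neighborhood of $\Gamma$ and need not be simply connected, these Poincar\'e lemmas are applied in boundary charts diffeomorphic to $Q_+$ (Lemma \ref{lem-extension-3}) and patched via the cutoff $\eta$; the freedom to pick $0 \le \alpha < 2$ in those lemmas is precisely what produces the matching freedom $0 \le \beta < 2$ in the final estimate.
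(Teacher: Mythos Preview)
Your overall strategy---test the differenced Amp\`ere law against $E-\hE$, exploit ellipticity of $\eps+\heps$ for coercivity of $|E-\hE|^2$, and invoke the Poincar\'e-type lemmas in local charts to sharpen residuals---is the right framework and matches the paper's variational approach. But there is a genuine gap in the execution, concentrated on the term $\int_\Omega \langle(\eps-\heps)\hE,\hE\rangle$.

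First, your claim that this cross-term is ``absorbable into $C_s\int d_\Gamma^{\alpha_1}|\hE|^2$'' misreads the hypothesis: $\eps-\heps\ge c\,d_\Gamma^{\alpha_1}I$ is a \emph{lower} bound, so all you know is $\langle(\eps-\heps)\hE,\hE\rangle\le C|\hE|^2$ (unweighted). Even granting a two-sided bound, $\int d_\Gamma^{\alpha_1}|\hE|^2$ is \emph{not} dominated by $\int d_\Gamma^{\beta}|\hE|^2$ when $\beta>\alpha_1$, and the applications of the lemma (e.g.\ in the proof of Theorem~\ref{thm2}, where $\beta=(\max\{\alpha_1,\alpha_2\}+2)/2$) require exactly that range.

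Second, your Poincar\'e-lemma remedy is applied only to $\eta(E-\hE)$. That decomposition helps with terms that pair the test field against something else, but the problematic residual $\int\langle(\eps-\heps)\hE,\hE\rangle$ involves $\hE$ alone---replacing the test field $E-\hE$ by its $T_0$-part plus a gradient does nothing to it. The paper closes this gap by decomposing $\hE$ \emph{separately}: in each chart one writes $\varphi_k\hE=\nabla\eta_k+\hbE_k$ via Lemma~\ref{lem-extension-3}, with $\|\hbE_k\|_{L^2}\le C\|\hE\|_{L^2(d_\Gamma^\beta)}$. Summing gives $\Phi\hE=\sum\nabla\eta_k+f$ with $f$ small in the weighted sense. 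The divergence-free field $F_E:=\Phi(\eps E-\heps\hE+\tfrac{i}{\omega}(J_m-\hJ_m))+\tfrac{i}{\omega}\nabla\Phi\times(H-\hH)$ satisfies $F_E\cdot\nu=0$ on $\partial\Omega\cap\Gamma$, so $\int\langle F_E,\nabla\eta_k\rangle=0$, and hence $\int\langle F_E,\Phi\hE\rangle=\int\langle F_E,f\rangle$. This identity---pairing with $\Phi\hE$ and then swapping for the small $f$---is what converts the unweighted $\hE$-residual into a $d_\Gamma^\beta$-weighted one, and it is absent from your sketch. Combined with the analogous decomposition of $\Phi(E-\hE)$ (your Lemma~\ref{lem-curl-2} step, which the paper also uses), one obtains a quadratic form in $(\Phi\hE,\delta_{\Phi E})$ whose positivity yields both $\int|\langle(\eps-\heps)\Phi\hE,\Phi\hE\rangle|$ and $\int|\delta_{\Phi E}|^2$ simultaneously. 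Your ``folding back'' of the $\int(J_m-\hJ_m)\cdot\bar E$ piece is also circular as written: substituting the Amp\`ere law back reintroduces $\int(\eps E-\heps\hE)\cdot\bar E$, which is of the same order as the terms you are trying to estimate.
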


\begin{proof}  We only establish assertion $1)$ and assume that $ \eps - \heps \ge c d_\Gamma^{\alpha_1} I$. The proof of assertion $1)$ in the case $ \heps -  \eps \ge c d_\Gamma^{\alpha_1} I $ and the proof of assertion $2)$ can be derived similarly.  Without loss of generality, one may assume that $s$ is small. 

We use local charts for $\partial \Omega \cap \Gamma$.  Let $\ell \ge 1$ and let   $\varphi_k \in  C^1_{c}(\mR^3)$, $U_k \subset \mR^3$  open ball, and ${\mathcal H}_k : U_k \to Q$ for $1 \le k \le \ell $ be such that ${\mathcal H}_k$ is a  diffeomorphism, ${\mathcal H}_k(U_k \cap \Omega) = Q_+$, and ${\mathcal H}_k(U_k \cap \Gamma) = Q_0$, $\supp \varphi_k \Subset U_k$, and $\Phi  = 1$ in a neighborhood of $\partial \Omega \cap \Gamma$, where 
$$
\Phi : = \sum_{k=1}^\ell \varphi_k \mbox{ in } \mR^3.
$$

 By Lemma~\ref{lem-extension-3}, for $1 \le k \le \ell$,  there exists $\hbE_k \in H(\curl, U_k \cap \Omega)$ and an open $ V_k \subset \mR^3$ containing $\overline{\partial U_k \cap  \Omega}$ 
 such that
\begin{equation}\label{lem-M2-1-1}
\nabla \times \hbE_k = \nabla \times (\varphi_k \hE)   \mbox{ in } U_k \cap  \Omega, 
\end{equation}
\begin{equation}\label{lem-M2-1-2}
\| \hbE_k \|_{L^2(U_k \cap \Omega)}  \le C \| \hE \|_{L^2(d_{\Gamma}^{\beta}, U_k \cap \Omega)}, 
\end{equation}
\begin{equation}\label{lem-M2-1-3}
\hbE_k = 0 \mbox{ in }  U_k \cap  \Omega \cap V_k.  
\end{equation}
Here and in what follows, $C$ denotes a positive constant depending only on $c$, $\alpha_1$, $\beta$, $\Omega$, $\eps, \, \heps, \, \mu$, and  $\hmu$; thus $C$ is independent of $s$.

Set, for $1 \le k \le \ell$,  
\begin{equation*}
\dpkE = \varphi_k( E - \hE) \mbox{ in } U_k \cap  \Omega, 
\end{equation*}
Let $s_1$ be a small positive number  defined later. 
Since $(E - \hE) \times \nu = 0$ on $\partial \Omega \cap \Gamma$, by Lemma~\ref{lem-curl-2}, for $1 \le k \le \ell$,  there exists $\bdkE \in H_0^1(U_k \cap \Omega)$ such that
\begin{equation}\label{lem-M2-2-1}
\nabla \times  \bdkE = \nabla \times \dpkE\mbox{ in } U_k \cap \Omega, 
\end{equation}
\begin{equation}\label{lem-M2-2-2}
\| \bdkE \|_{L^2(d_{\partial \Omega}^{-\beta}, U_k \cap \Omega)}  \le C_{s_1} \|\dpkE \|_{L^2(d_{\Gamma}^{\beta}, U_k \cap \Omega)} + s_1 \| \dpkE \|_{H(\curl, U_k \cap \Omega)}. 
\end{equation}
It is clear then 
\begin{equation}\label{lem-M2-2-3}
\bdkE = 0 \mbox{ in } \partial (U_k \cap  \Omega).   
\end{equation}

We have, in $\Omega$,  
\begin{align*}
\nabla \times (\varphi_k E - \varphi_k \hE)  &=   \nabla \varphi_k \times (E - \hE) + \varphi_k \nabla (E - \hE) \\[6pt]
& \mathop{=}^{\eqref{lem-M2-sys}}   \nabla \varphi_k \times (E - \hE) + i \omega \varphi_k (\mu H - \hmu \hH) + \varphi_k (J_e - \hJ_e).  
\end{align*}
We derive from \eqref{lem-M2-2-2} that 
\begin{equation}\label{lem-M2-2-4}
\| \bdkE \|_{L^2(d_{\partial \Omega}^{-\beta} U_k \cap \Omega)}  \le  C_{s_1}  \|E - \hE\|_{L^2(d_{\Gamma}^{\beta}, U_k \cap \Omega)}  + C s_1 \| \big(E - \hE, \mu H - \hmu \hH,  J_e - \hJ_e\big)\|_{L^2(U_k \cap \Omega)}. 
\end{equation}

For $1 \le k \le \ell$, let $\xi_k, \eta_k \in H^1(U_k \cap \Omega)$ be such that 
\begin{equation}\label{lem-M2-xi-eta}
\nabla \xi_k = \dpkE -  \bdkE \quad \mbox{ and } \quad \nabla \eta_k = \varphi_k \hE - \hbE_k \mbox{ in } U_k \cap \Omega. 
\end{equation}
Such $\xi_k$ and $\eta_k$ exist by Poincare's lemma, \eqref{lem-M2-1-1}, and \eqref{lem-M2-2-1}; moreover, one can assume that 
\begin{equation}\label{lem-M2-supp-xi-eta}
\xi_k  = 0 \mbox{ on }   \partial (U_k \cap  \Omega) \quad \mbox{ and } \quad  \eta_k =   0  \quad \mbox{ on }  \partial U_k \cap  \Omega
\end{equation}
since $\nabla \xi_k \times \nu = 0$ on  $\partial (U_k \cap  \Omega)$ by  \eqref{lem-M2-2-3} and  $ \nabla \eta_k \times \nu  = 0$ on $\partial U_k \cap  \Omega$ by \eqref{lem-M2-1-3}. 

Extend $\xi_k$ and $\eta_k$ by 0 in $\Omega \setminus U_k$ and still denote these extensions by $\xi_k$ and $\eta_k$. 
Set, in $\Omega$, 
\begin{equation}\label{lem-M2-def-FE}
F_E =  \Phi \left(\eps E - \heps \hE + \frac{i}{\omega} J_m - \frac{i}{\omega} \hJ_m \right) + \frac{i}{\omega} \nabla \Phi \times (H - \hH),
\end{equation}
\begin{equation}\label{lem-M2-delta-f}
\delta_f = \sum_{k = 1}^{\ell} \bdkE,  \quad f = \sum_{k=1}^{\ell} \hbE_k, \quad \mbox{ and }
 \quad 
\delta_{\Phi E} = \Phi ( E - \hE ). 
\end{equation}
It follows from \eqref{lem-M2-xi-eta} and \eqref{lem-M2-delta-f} that, in $\Omega$,  
\begin{equation}\label{lem-M2-xi-eta-sum}
\sum_{k=1}^\ell \nabla \xi_k = \delta_{\Phi E} - \delta_f \quad \mbox{ and } \quad \sum_{k=1}^\ell \nabla \eta_k = \Phi \hE - f, 
\end{equation}
and from \eqref{lem-M2-1-2} and \eqref{lem-M2-2-4} that 
\begin{multline}\label{lem-M2-est-delta-f}
\| \delta_f\|_{L^2(d_{\partial \Omega}^{-\beta}, \Omega)} + 
\| f \|_{L^2(\Omega)} \le C_{s_1} \| (J_e, \hJ_e)\|_{L^2(\Omega)} \\[6pt]+ C_{s_1}  \| (E, \hE, H, \hH) \|_{L^2(d_{\Gamma}^{\beta}, \Omega)} 
+ C s_1 \| \big(E - \hE, \mu H - \hmu \hH \big)\|_{L^2(U_k \cap \Omega)}.
\end{multline}

Since, by \eqref{lem-M2-sys} and \eqref{lem-M2-def-FE}, 
$$
F_E =\frac{i}{\omega}  \nabla \times (\Phi H ) - \frac{i}{\omega} \nabla \times (\Phi \hH) \mbox{ in } \Omega
$$
and, by \eqref{lem-M2-bdry}, 
$$
\big(\nabla \times (\Phi H  - \Phi \hH) \big)  \cdot  \nu = \dive_{\Gamma} \big( (\Phi H  - \Phi \hH)  \times \nu \big) = 0 \mbox{ on } \partial \Omega \cap \Gamma, 
$$ 
we obtain 
\begin{equation}\label{lem-M2-s1-1}
\dive F_{E} = 0 \mbox{ in } \Omega \quad \mbox{ and } \quad F_E \cdot \nu = 0 \mbox{ on } \partial \Omega \cap \Gamma. 
\end{equation}

From \eqref{lem-M2-supp-xi-eta} and \eqref{lem-M2-s1-1}, we  have, for $1 \le k \le \ell$,  
\begin{equation*}
\int_{\Omega} \langle   F_E,  \nabla \eta_k   \rangle = 0.   
\end{equation*}
Summing this identity with respect to $k$ and 
using  \eqref{lem-M2-xi-eta-sum},  we obtain 
\begin{equation}\label{lem-M2-2}
\int_{\Omega} \langle   F_E,  \Phi \hE  \rangle = \int_{\Omega} \langle   F_E ,   f  \rangle. 
\end{equation}

We have, for $1 \le k \le \ell$,  
$$
\frac{i}{\omega} \big( \nabla \times (\varphi_k \hH) - \nabla \varphi_k \times \hH\big)  = \frac{i}{\omega} \varphi_k \nabla \times \hH \mathop{=}^{\eqref{lem-M2-sys}} \heps  (\varphi_k \hE) + \frac{i}{\omega} \varphi_k \hJ_m \mbox{ in } \Omega. 
$$
It follows that 
\begin{equation*}
\dive [\heps  (\varphi_k \hE) ]  =   - \frac{i}{\omega} \dive \big( \varphi_k \hJ_m + \nabla \varphi_k \times \hH \big) \mbox{ in } \Omega.  
\end{equation*}
We derive that, for $1 \le k, l \le \ell$,   
\begin{equation}\label{lem-M2-integration}
\int_{\Omega} \langle    \heps \nabla \xi_l, \varphi_k \hE   \rangle =   \frac{i}{\omega}  \int_{\Omega}  \langle   \nabla \xi_l,  \varphi_k \hJ_m + \nabla \varphi_k \times \hH \rangle, 
\end{equation}
since $\xi_l = 0$ on $\partial \Omega$.  Summing with respect to $k$ and $l$ and using \eqref{lem-M2-xi-eta-sum}, we get
\begin{equation*}
\int_{\Omega} \langle    \heps (\delta_{\Phi E} - \delta_f), \Phi \hE  \rangle =   \frac{i}{\omega}  \int_{\Omega}  \langle  \delta_{\Phi E} - \delta_f,  \Phi  \hJ_m + \nabla  \Phi \times \hH \rangle. 
\end{equation*}
This yields 
\begin{equation}\label{lem-M2-3}
\int_{\Omega} \langle    \heps \delta_{\Phi E} , \Phi \hE   \rangle =  \int_{\Omega} \langle    \heps \delta_f  , \Phi \hE \rangle +  \frac{i}{\omega}  \int_{\Omega}  \langle  \delta_{\Phi E} - \delta_f,  \Phi  \hJ_m +  \nabla  \Phi \times H \rangle. 
\end{equation}

Noting  that $\eps E - \heps \hE = \eps (E - \hE ) + (\eps - \heps) \hE$ in $\Omega$,    we obtain from \eqref{lem-M2-def-FE} that 
\begin{equation}\label{lem-M2-FE}
F_E = \eps \delta_{\Phi E} + (\eps - \heps) \Phi  \hE +  \frac{i}{\omega}  \Phi \left(J_m - \hJ_m \right) + \frac{i}{\omega} \nabla \Phi \times (H - \hH) \mbox{ in } \Omega. 
\end{equation}
Subtracting \eqref{lem-M2-3} from \eqref{lem-M2-2} and using \eqref{lem-M2-FE}, we have
\begin{multline}\label{lem-M2-4}
\int_{\Omega} \langle (\eps - \heps) \delta_{\Phi E}, \Phi \hE \rangle + \int_{\Omega} \langle (\eps - \heps)  \Phi \hE, \Phi \hE \rangle \\[6pt] 
= -  \frac{i}{\omega} \int_{\Omega} \langle  \Phi \left(J_m - \hJ_m \right) + \nabla \Phi \times (H - \hH), \Phi \hE \rangle \\[6pt]
 + \int_{\Omega} \langle   F_E,   f  \rangle - \int_{\Omega} \langle    \heps \delta_f  , \Phi \hE    \rangle
 - \frac{i}{\omega}  \int_{\Omega}  \langle  \delta_{\Phi E} - \delta_f,  \Phi  \hJ_m +\nabla  \Phi \times H \rangle. 
\end{multline}

From \eqref{lem-M2-supp-xi-eta} and  \eqref{lem-M2-s1-1}, we derive  that, for $1 \le k \le \ell$,
\begin{equation}\label{lem-M2-s1}
\int_{\Omega} \langle F_E,  \nabla  \xi_k \rangle  = 0. 
\end{equation}
Summing \eqref{lem-M2-s1} with respect to $k$ and using \eqref{lem-M2-xi-eta-sum}, we get 
\begin{equation*}
\int_{\Omega} \langle F_E,  \delta_{\Phi E} - \delta_f \rangle = 0. 
\end{equation*}
It follows from \eqref{lem-M2-FE} that 
\begin{multline}\label{lem-M2-1}
 \int_{\Omega} \langle \eps \delta_{\Phi E}, \delta_{\Phi E}  \rangle + \int_{\Omega} \langle [\eps - \heps] \Phi \hE, \delta_{\Phi E} \rangle \\[6pt] = -  \frac{i}{\omega} \int_{\Omega}  \langle \Phi (J_m - \hJ_m)  + \nabla \Phi \times (H - \hH), \delta_{\Phi E} \rangle +  \int_{\Omega} \langle F_E , \delta_{f} \rangle.  
\end{multline}

We have, by \eqref{lem-M2-FE}, 
\begin{multline}\label{lem-M2-5-0}
\left| \int_{\Omega} \langle   F_E,   f  \rangle  \right| \le C \left( \int_{\Omega} |f|^2 \right)^{1/2}  \left( \int_{\Omega} | \langle (\eps - \heps) \Phi \hE , \Phi \hE  \rangle| + |\delta_{\Phi E}|^2\right)^{1/2}  \\[6pt]
+ C \int_{\Omega} |(f, J_m, \hJ_m,  \nabla \Phi \times H, \nabla \Phi \times \hH )|^2.  
\end{multline}

Since $\eps - \heps \ge 0$ and $\eps$ is uniformly elliptic, we deduce that 
$$
\langle (\eps - \heps) x, x \rangle + \langle \eps y, y \rangle + \langle  (\eps - \heps) x, y \rangle + \langle  (\eps - \heps) y, x \rangle \ge C \big( \langle (\eps - \heps) x, x \rangle + \langle \eps y, y \rangle \big), 
$$
for any $x, y \in \mR^3$.  It follows from  \eqref{lem-M2-4}, \eqref{lem-M2-1}, and \eqref{lem-M2-5-0} that 
\begin{multline}\label{lem-M2-5}
\int_{\Omega} | \langle (\eps - \heps) \Phi \hE , \Phi \hE  \rangle| + |\dpE|^2 \\[6pt]
 \le C   \int_{\Omega}   |(J_m, \hJ_m, \nabla \Phi \times H, \nabla \Phi \times \hH, f, \delta_f)|^2 + C \Big|\int_{\Omega}\langle \heps \delta_f, \Phi E \rangle  \Big| \\[6pt]
 + C  \int_{\Omega} |J_m - \hJ_m| | \hE| +  C  \int_{\Omega} | \nabla \Phi \times (H - \hH)| |\Phi \hE|. 
\end{multline}

We next estimate $\Big| \int_{\Omega}\langle \heps \delta_f, \Phi E \rangle  \Big|$.   Since
\begin{multline*}
\left| \int_{\Omega}\langle \heps \delta_f, \Phi E \rangle  \right| =  \left| \int_{\Omega}\langle (\eps- \heps)^{-1/2} \heps  \delta_f,  (\eps- \heps)^{1/2} \Phi E \rangle  \right| \\[6pt]
 \le \frac{1}{4s}
  \int_{\Omega} \langle (\eps - \heps)^{-1/2}  \heps \delta_f, (\eps - \heps)^{-1/2}  \heps  \delta_f \rangle  + s \int_{\Omega} \langle  (\eps - \heps)^{1/2} \Phi E, (\eps - \heps)^{1/2} \Phi E \rangle
\end{multline*}
and $\eps - \heps \ge c d_{\Gamma}^{\alpha_1} I$ in $\Omega$, we obtain
\begin{equation}\label{lem-M2-interpolation}
\left| \int_{\Omega}\langle \heps \delta_f, \Phi E \rangle  \right|  \le  \frac{C}{s}
  \int_{\Omega} d_{\Gamma}^{-2 \alpha_1} |\delta_f|^2  + s \int_{\Omega} \langle  (\eps - \heps) \Phi E, \Phi E \rangle. 
\end{equation}
Combining  \eqref{lem-M2-est-delta-f} and \eqref{lem-M2-interpolation} yields
\begin{multline}\label{lem-M2-est-I}
\left| \int_{\Omega}\langle \heps \delta_f, \Phi E \rangle  \right| \le C_{s, s_1} \int_{\Omega} |(J_e, \hJ_e)|^2  + C_{s, s_1} \int_{\Omega} d_{\Gamma}^\beta |(E, \hE, H, \hH)|^2   \\[6pt]+ \frac{C s_1 }{s} \int_{\Omega} |(\mu H -\hmu \hH, E - \hE)|^2  
+  s \int_{\Omega} \langle  (\eps - \heps) \Phi E, \Phi E \rangle. 
\end{multline}
Using \eqref{lem-M2-est-delta-f} and the fact $\Phi = 1$ in a neighborhood of $\Gamma \cap \partial \Omega$, we derive  from \eqref{lem-M2-5} and \eqref{lem-M2-est-I} that 
\begin{multline}\label{lem-M2-est-II}
\int_{\Omega} | \langle (\eps - \heps) \Phi \hE , \Phi \hE  \rangle| + |\dpE|^2  \\[6pt]
 \le C_{s, s_1}   \int_{\Omega}   |(J_e, \hJ_e, J_m, \hJ_m)|^2
 + C_{s, s_1} \int_{\Omega} d_{\Gamma}^\beta |(E, \hE, H, \hH)|^2  \\[6pt]
+  \frac{C s_1}{s}  \int_{\Omega} |(\mu H -\hmu \hH, E - \hE)|^2 \\[6pt]
+ C s \int_{\Omega} \langle  (\eps - \heps) \Phi E, \Phi E \rangle + C  \int_{\Omega} |J_m - \hJ_m| | \hE|. 
\end{multline}
Take $s_1 = C s^2$.   One derives from \eqref{lem-M2-est-II} that, for $s$ small,  
\begin{multline*}
\int_{\Omega} | \langle (\eps - \heps) \Phi \hE , \Phi \hE  \rangle| + |\dpE|^2 
 \le C_{s}   \int_{\Omega}   |(J_e, \hJ_e, J_m, \hJ_m)|^2 + 
 C_{s} \int_{\Omega} d_{\Gamma}^\beta |(E, \hE, H, \hH)|^2  \\[6pt]
+   s  \int_{\Omega} |(\mu H -\hmu \hH)|^2 + C  \int_{\Omega} |J_e - \hJ_e| | \hH|, 
\end{multline*}
which implies \eqref{lem-M2-assertion1}. 
\end{proof}

\begin{remark}  \label{rem-proof-thm2} \rm The proof of Lemma~\ref{lem-M2} involves local charts. The involvement is  of a global character in the sense that one has to combine local charts before deriving desired estimates in some parts of the proof, see \eqref{lem-M2-5} and \eqref{lem-M2-est-II}. In fact,  one cannot derive variants of 
\eqref{lem-M2-assertion1} and \eqref{lem-M2-assertion2} for $(\varphi_k E, \varphi_k \hE, \varphi_k H, \varphi_k \hH)$. To this end, note that,  in $U_k \cap \Omega$, 
 \begin{equation*}
\left\{ \begin{array}{cl}
\nabla \times (\varphi_k E) = i \omega    \mu (\varphi_k H) + J_{e, k} , \\[6pt]
\nabla \times (\varphi_k H) = - i \omega  \eps (\varphi_k E) + J_{m, k}  , 
\end{array}\right. \quad \quad 
\left\{ \begin{array}{cl}
\nabla \times (\varphi_k \hE) = i \omega \hmu  (\varphi_k \hH) + \hJ_{e, k}  \\[6pt]
\nabla \times (\varphi_k \hH) = - i \omega   \heps  (\varphi_k \hE) + \hJ_{m, k} , 
\end{array}\right. 
\end{equation*} 
where
$$
J_{e, k} = \varphi_k J_e + \nabla \varphi_k \times E, \quad  \hJ_{e, k} = \varphi_k \hJ_e + \nabla \varphi_k \times \hE, 
$$
$$
J_{m, k} = \varphi_k \hJ_m + \nabla \varphi_k \times H, \quad  \hJ_{m, k} = \varphi_k \hJ_m + \nabla \varphi_k \times \hH. 
$$
Due to the terms $\nabla \varphi_k \times E$, $\nabla \varphi_k \times \hE$, $\nabla \varphi_k \times H$, and $\nabla \varphi_k \times \hH$ in $J_{e, k}, \, \hJ_{e, k}, \, J_{m, k}$,  and $\hJ_{m, k}$, respectively, we are not able to derive the variants of \eqref{lem-M2-assertion1} and \eqref{lem-M2-assertion2} for $(\varphi_k E, \varphi_k \hE, \varphi_k H, \varphi_k \hH)$.  This combination is the key difference between the proof strategies of Lemmas~\ref{lem-M2} and \ref{lem-compact-1} and makes the proof of Lemma~\ref{lem-M2} more involved. Another difference between the proofs is that one considers the extensions of $\varphi E_k$ and $\varphi_k (E - \hE)$ in the proof Lemma~\ref{lem-M2} instead of the extensions of $\varphi E_k$ and $\varphi_k \hE$ as in the proof Lemma~\ref{lem-compact-1} to ensure 
integration by parts arguments, see \eqref{lem-M2-integration} where $\xi_k = 0$ on $\partial (\Omega \cap U_k)$ is required. 
\end{remark}

\subsection{Proof of Theorem~\ref{thm2}}  For a simpler presentation, we will assume that $D_{-\tau}$ is connected.  We first consider the case where $\alpha_1 + \alpha_2 > 0$. Set 
\begin{equation*}
\beta = (  \max\{ \alpha_1, \alpha_2 \} + 2)/ 2
\end{equation*}
and 
$$
\dEd = E_\delta - \hE_\delta  \mbox{ in } D_{-\tau}  \quad \mbox{ and } \quad \dHd = H_\delta - \hH_\delta \mbox{ in } D_{-\tau}. 
$$
Then 
\begin{equation}\label{thm2-def-beta}
\max\{ \alpha_1,  \alpha_2 \}  < \beta  < 2. 
\end{equation}

We first prove by contradiction that 
\begin{equation}\label{thm2-p1}
\| (E_\delta, H_\delta) \|_{L^2(d^{\beta}_{\Gamma},  B_{R_0})}  \le C \| J \|_{L^2(\mR^3)}.   
\end{equation}
Assume, for some $\delta_n \to 0$ and $J_n \in [L^2(\mR^3)]^3$ with $\supp J_n \subset B_{R_0} \setminus 
\big(\big(D_{-\tau} \cup {\mathcal F}^{-1}(D_{-\tau}) \big) = \emptyset$ that 
\begin{equation}\label{thm2-contradiction-1}
\lim_{n \to + \infty} \| J_n\|_{L^2(\mR^3)} = 0 
\quad \mbox{ and } \quad   \| (E_n, H_n) \|_{L^2(d^{\beta}_\Gamma,  B_{R_0})}
 = 1, 
\end{equation}
where $(E_n, H_n)$ is the solution corresponding to $\delta_n$ and $J_n$. Integrating by parts and 
using the radiating condition, as usual, we obtain 
\begin{equation*}
\left| \Im \int_{B_{R_0}} \langle \mu_{\delta_n}^{-1} \nabla \times E_n, \nabla \times E_n \rangle - \omega^2 \langle \eps_{\delta_n} E_n, E_n \rangle \right|  \le \left| \int_{B_{R_0}} \langle  i \omega J_n, E_n \rangle \right| \to 0 \mbox{ as } n \to + \infty. 
\end{equation*}
This implies 
\begin{equation}\label{thm2-p0-111}
 \int_{D} \delta_n ( |E_n|^2 + |H_n|^2 ) \to 0 \mbox{ as } n \to + \infty. 
\end{equation}
By a change of variables for the Maxwell equations, see e.g.  \cite[Lemma 7]{Ng-Superlensing-Maxwell}, we have 
\begin{equation*}
\left\{ \begin{array}{cl}
\nabla \times E_n = i \omega \mu H_n  & \mbox{ in } D_{-\tau}, \\[6pt]
\nabla \times H_n = - i \omega \eps E_n & \mbox{ in } D_{-\tau}, 
\end{array}\right. \quad \quad 
\left\{ \begin{array}{cl}
\nabla \times \hE_n = i \omega  \hmu \hH_n + \hJ_{e, n} & \mbox{ in } D_{-\tau}, \\[6pt]
\nabla \times \hH_n = - i \omega  \heps \hE_n + \hJ_{m, n} & \mbox{ in } D_{-\tau}, 
\end{array}\right. 
\end{equation*}
and 
\begin{equation*}
E_n \times \nu = \hE_n \times \nu  \quad \mbox{ and } \quad H_n \times \nu = \hH_n \times \nu \quad  \mbox{ on } \Gamma. 
\end{equation*}
Here 
$$
\hJ_{e, n} =  - \delta \omega   {\mathcal F} _*I  \, \hH_n \quad \mbox{ and } \quad \hJ_{m, n} =  \delta \omega {\mathcal F}_*I \, \hE_n \quad  \mbox{ in } D_{- \tau}.  
$$
Note that, in $D_{-\tau}$,  
$$
\eps E - \heps \hE = (\eps  - \heps) E  + \heps (E - \hE)
$$
and 
$$
\mu H - \hmu \hH =  (\mu  - \hmu) H  + \hmu (H - \hH). 
$$
Applying \eqref{lem-M2-assertion1} and \eqref{lem-M2-assertion2} for a sufficiently small $s$ and using \eqref{thm2-p0-111}, we  have
\begin{equation}\label{thm2-p2-1}
\int_{D_{-\tau}} |\langle (\eps - \heps) E_n, E_n  \rangle| + \int_{D_{-\tau}} |\langle (\mu - \hmu) H_n, H_n  \rangle + \int_{D_{-\tau}} |(\dEn, \dHn)|^2  \le C. 
\end{equation}
This implies 
\begin{equation}\label{thm2-p2}
\| E_n \|_{L^2(d_\Gamma^{\alpha_1},  B_{R_0})} + \| H_n \|_{L^2(d_\Gamma^{\alpha_2},  B_{R_0})}  \le C. 
\end{equation}
Fix $\psi \in C^1_{c}(\mR^3)$ (arbitrary) such that $\psi = 0 $ in a neighborhood of $\Gamma$. We have, in $\mR^3$,  
\begin{equation*}
\nabla \times (\psi E_n) = \psi \nabla \times E_n + \nabla \psi \times E_n \quad \mbox{ and }  \quad \dive [ \eps_0  (\psi E_n)] = \psi \dive (\eps_0 E_n) + \nabla \psi \cdot  \eps_0 E_n.  
\end{equation*}
Using \eqref{thm2-def-beta} and \eqref{thm2-p1},  and applying \cite[Lemma 1]{Ng-Superlensing-Maxwell}, one may assume that 
$$
(\psi E_n) \mbox{ converges  in } [L^2(\mR^3)]^3, 
$$
which yields,  since $\psi$ is arbitrary, 
\begin{equation}\label{thm2-p1-*}
(E_n) \mbox{ converges  in } [L^2_{\loc}(\mR^3 \setminus \Gamma)]^3. 
\end{equation}
Similarly, one may also assume that 
\begin{equation}\label{thm2-p2-*}
(H_n) \mbox{ converges  in } [L^2_{\loc}(\mR^3 \setminus \Gamma)]^3.  
\end{equation}
Moreover,   from  \eqref{thm2-p2}, \eqref{thm2-p1-*}, \eqref{thm2-p2-*}   and the fact $\beta > \max\{\alpha_1, \alpha_2\}$,  by Lemma~\ref{lem-Inter}, one may assume that
\begin{equation}\label{thm2-contradiction-2}
\big( (E_n, H_n) \big) \mbox{ converges in } [L^2(d_\Gamma^{\beta}, B_{R_0}) ]^6
\end{equation}
and, by  \eqref{thm2-p2-1}, 
\begin{equation}\label{thm2-contradiction-2-1}
\big( (\dEn, \dHn) \big) \mbox{ converges weakly in  $[L^2(D_{-\tau})]^6$}. 
\end{equation}
Let $(E, H)$ be the limit of $\big( (E_n, H_n) \big) $ in $[L^2_{\loc}(\mR^3 \setminus \Gamma)]^6$. From \eqref{thm2-p2-1}, we derive that  $(\dE, \dH) \in [L^2(D_{-\tau})]^2$. From the equations of $(E, H)$, it follows that $(\dE, \dH) \in [H(\curl, D_{-\tau})]^2$. One also has 
\begin{equation}\label{thm2-TC}
\dE \times \nu = \dH \times \nu = 0 \mbox{ on } \Gamma. 
\end{equation}

We have, for $R> R_0$,  
\begin{equation*}
\left| \Re \int_{\partial B_R} H \times \nu \cdot \bar E \right| =  \lim_{n \to + \infty} \left|  \Re \int_{\partial B_R} H_n \times \nu \cdot \bar E_n \right| \mathop{\le}^{\eqref{thm2-p0-111}} \limsup_{n \to + \infty}   \left| \int_{B_R} J_n  \bar E_n \right| \mathop{=}^{ \eqref{thm2-contradiction-1}} 0. 
\end{equation*}
Since $(E, H)$ satisfies the radiating condition, it follows that 
\begin{equation*} 
E = H = 0 \mbox{ in the unbounded connected component of $\mR^3 \setminus \bar D$} , 
\end{equation*}
which in turn implies, by \eqref{thm2-TC} and  the unique continuation principle,  that 
\begin{equation}\label{thm2-contradiction-3}
E = H = 0 \mbox{ in } \mR^3. 
\end{equation}
Combining \eqref{thm2-contradiction-1}, \eqref{thm2-contradiction-2},  \eqref{thm2-contradiction-2-1}, and \eqref{thm2-contradiction-3} yields a contradiction.  Hence \eqref{thm2-p1} is proved. 
Applying Lemma~\ref{lem-M2} again,  we derive \eqref{T2-1} from \eqref{thm2-p1}.

By the same method, one also obtains the following fact: for any $(\delta_n) \to 0$, up to a subsequence, $\big((  E_{\delta_n}, H_{\delta_n}) \big)$ converges in $[L^2_{\loc} (\mR^3 \setminus \Gamma)]^6$, and the limit  is a radiating solution of the corresponding system.  

\medskip 
We next consider the case $\alpha_1 = \alpha_2 = 0$. We first prove by contradiction that 
\begin{equation*}
\| (E_\delta, H_\delta) \|_{L^2(B_{R_0})}  \le C \| J \|_{L^2(\mR^3)}. 
\end{equation*}
Assume, for some $\delta_n \to 0$ and $J_n \in L^2(\mR^3)$ with $\supp J_n \subset B_{R_0}$ that 
\begin{equation}\label{thm2-contradiction-1-***}
\lim_{n \to + \infty} \| J_n\|_{L^2(\mR^3)} = 0 
\quad \mbox{ and } \quad   \| (E_n, H_n) \|_{L^2(B_{R_0})}
 = 1, 
\end{equation}
where $(E_n, H_n)$ is the solution corresponding to $\delta_n$ and $J_n$. 
By a change of variables for the Maxwell equations, see e.g.  \cite[Lemma 7]{Ng-Superlensing-Maxwell}, we have 
\begin{equation*}
\left\{ \begin{array}{cl}
\nabla \times E_n = i \omega \mu H_n  & \mbox{ in } D_{-\tau}, \\[6pt]
\nabla \times H_n = - i \omega \eps E_n  + J_n & \mbox{ in } D_{-\tau}, 
\end{array}\right. \quad \quad 
\left\{ \begin{array}{cl}
\nabla \times \hE_n = i \omega  \hmu \hH_n + \hJ_{e, n} & \mbox{ in } D_{-\tau}, \\[6pt]
\nabla \times \hH_n = - i \omega  \heps \hE_n + \hJ_{m, n} & \mbox{ in } D_{-\tau}, 
\end{array}\right. 
\end{equation*}
and 
\begin{equation*}
E_n \times \nu = \hE_n \times \nu  \quad \mbox{ and } \quad H_n \times \nu = \hH_n \times \nu \quad  \mbox{ on } \Gamma. 
\end{equation*}
Here 
$$
\hJ_{e, n} =  - \delta \omega   {\mathcal F} _*I  \, \hH_n \quad \mbox{ and } \quad \hJ_{m, n} =  \delta \omega {\mathcal F}_*I \, \hE_n  +  {\mathcal F}_*J_n  \quad  \mbox{ in } D_{- \tau}.  
$$
Note that 
$$
\int_{\Omega} |J_n -  \hJ_{m, n}| |\hE_n| + |\hJ_{e, n} | |\hH_n| \le \left( \int_{\Omega} |(J_n, \hJ_{m, n}, \hJ_{e, n})|^2 \right)^{1/2}  \left( \int_{\Omega} |(\hE_n, \hH_n)|^2 \right)^{1/2}. 
$$
As in \eqref{thm2-p2}, we then have 
\begin{equation*}
\| E_n \|_{L^2(B_{R_0})} + \| H_n \|_{L^2(B_{R_0})}  \le C. 
\end{equation*}
As in the proof of Theorem~\ref{thm1}, one can prove that, up to a subsequence, 
\begin{equation}\label{thm2-contradiction-1-******}
(E_n, H_n) \mbox{ converges in } [L^2_{\loc} (\mR^3)]^{12}.
\end{equation}
Moreover, the limit $(E_0, H_0)$ is in $[H_{\loc}(\curl, \mR^3)]^2$ and is a radiating solution of the equations 
\begin{equation*}
\left\{\begin{array}{cl}
\nabla \times E_0 = i \omega \mu_0 H &  \mbox{ in } \mR^3, \\[6pt]
\nabla \times H_0 = - i \omega \eps_0 E&  \mbox{ in } \mR^3.  
\end{array} \right.
\end{equation*}
By Lemma~\ref{lem-unique-1}, we get
$$
E_0 = H_0 = 0 \mbox{ in } \mR^3. 
$$
This contradicts \eqref{thm2-contradiction-1-***} and \eqref{thm2-contradiction-1-******}. The uniqueness of $(E_0, H_0)$ is a consequence of Lemma~\ref{lem-unique-1} and the strong convergence of $(E_\delta, H_\delta)$ to $(E_0, H_0)$ in $[L^2_{\loc} (\mR^3)]^{12}$ can be derived as in the proof of Theorem~\ref{thm1} and are omitted. \qed

\subsection{Some applications of Theorem~\ref{thm2}} \label{sect-application}

In this section, we present two applications of Theorem~\ref{thm2}.  The first is a consequence of Theorem~\ref{thm2} with  $\alpha_1 = \alpha_2 = 0$.  We have

\begin{corollary}\label{corisotropic-2}
Let $0 < \delta < 1$,  $J \in [L^2(\mR^3)]^3$ with $\supp J \subset B_{R_0}$, and  
let $(E_\delta, H_\delta) \in [H_{\loc} (\curl, \mR^3)]^2$ be the unique radiating solution of \eqref{Main-eq-delta}. Assume that $D$ is of class $C^2$ and,   for each connected component $O$ of $D_{-\tau} \cup \Gamma \cup D_{\tau}$ with $\tau > 0$ small, the following four conditions hold 
\begin{equation*}
\mbox{ either } \ep |_{D_{-\tau} \cap O} \mbox { or } \en |_{D_{\tau} \cap O} \mbox{  is isotropic},  
\end{equation*}
\begin{equation*}
\mbox{ either } \mup  |_{D_{-\tau} \cap O}  \mbox { or } \mun |_{D_{\tau} \cap O} \mbox {  is isotropic},  
\end{equation*}
\begin{equation*}
\ep \big(x_\Gamma - t \nu(x_\Gamma) \big)  \ge - \en \big(x_\Gamma + t \nu(x_\Gamma) \big)  +  c I  \quad \mbox{ or } \quad 
- \en \big(x_\Gamma - t \nu(x_\Gamma) \big) \geq   \ep \big(x_\Gamma + t \nu(x_\Gamma) \big) + c I 
\end{equation*}
and
\begin{equation*}
\mup \big(x_\Gamma - t \nu(x_\Gamma) \big) \ge - \mun \big(x_\Gamma + t \nu(x_\Gamma) \big) +  c I  \quad \mbox{ or } \quad 
- \mun \big(x_\Gamma - t \nu(x_\Gamma) \big) \ge   \mup \big(x_\Gamma + t \nu(x_\Gamma) \big) +  c I, 
\end{equation*}
 for every $x_{\Gamma} \in \Gamma \cap O$ and for every $t \in (0, \tau)$,  for some $c>0.$  Then, for all $R> 0$, 
\begin{equation*}
\int_{B_R} |(E_\delta, H_\delta)|^2  \le C_R \| J\|_{L^2(\mR^3)}^2, 
\end{equation*}
for some positive constant $C_{R}$ independent of $\delta$ and $J$.  
Moreover,  $(E_\delta, H_\delta)$ converges to $(E_0, H_0)$ strongly in $[L^2_{\loc}(\mR^3)]^6$, as $\delta \to 0$, where  $(E_0, H_0) \in [H_{\loc}(\curl, \mR^3 )]^2$ is the  unique radiating  solution of \eqref{Main-eq-delta} with $\delta = 0$.  As a consequence,  
\begin{equation*} 
\int_{B_R} |(E_0, H_0)|^2    \le C_{R} \| J\|_{L^2(\mR^3)}^2. 
\end{equation*}
\end{corollary}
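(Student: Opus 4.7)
The strategy is to apply Theorem~\ref{thm2} with $\alpha_1 = \alpha_2 = 0$. In this regime the support restriction on $J$ becomes vacuous, and the stated bound together with the limiting absorption conclusion match precisely the conclusions of Corollary~\ref{corisotropic-2}. What must be verified is that, for an appropriate reflection $\mathcal{F}$, the push-forwards $\heps := \mathcal{F}_*\en$ and $\hmu := \mathcal{F}_*\mun$ satisfy one of the alternatives $\eps - \heps \ge cI$ or $\heps - \eps \ge cI$ (and analogously for $\mu,\hmu$) on each connected component of $D_{-\tau}$.

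Since $D$ is of class $C^2$, for $\tau$ small the normal reflection
\begin{equation*}
\mathcal{F}(x_\Gamma + t\nu(x_\Gamma)) = x_\Gamma - t\nu(x_\Gamma)
\end{equation*}
is a $C^1$-diffeomorphism on a tubular neighborhood of $\Gamma$; a routine extension into the interior produces the required diffeomorphism $\mathcal{F}: D \setminus \bar U \to D_{-\tau}$ for a suitable smooth $U \Subset D$. At each $x_\Gamma \in \Gamma$ one has $\nabla \mathcal{F}(x_\Gamma) = I - 2\nu\nu^T =: R$, an orthogonal matrix with $\det R = -1$, so for any symmetric $A$,
\begin{equation*}
\mathcal{F}_* A(x_\Gamma) = \frac{R\,A(x_\Gamma)\,R^T}{\det \nabla \mathcal{F}(x_\Gamma)} = -R\,A(x_\Gamma)\,R^T,
\end{equation*}
whose eigenvalues coincide with those of $-A(x_\Gamma)$ and which reduces to $-aI$ whenever $A = aI$ is isotropic. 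By $C^1$-regularity of $\mathcal{F}$, $\en$, and $\mun$, $\heps(x')$ differs from $-R\,\en(\mathcal{F}^{-1}(x'))\,R^T$ by $O(\tau)$ uniformly in $D_{-\tau}$, so its eigenvalues lie within $O(\tau)$ of those of $-\en(\mathcal{F}^{-1}(x'))$; in the isotropic case $\heps(x') = -\en(\mathcal{F}^{-1}(x')) + O(\tau)I$. The same statements hold for $\hmu$.

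Fix now a connected component $O$ of $D_{-\tau} \cup \Gamma \cup D_\tau$, and suppose the electric alternative $\ep \ge -\en + cI$ at reflected pairs holds; the other alternative and the magnetic pair are handled identically. If $\en$ is isotropic on $O \cap D_\tau$, then $\heps(x') = -\en(\mathcal{F}^{-1}(x')) + O(\tau) I$, so the pointwise hypothesis reads $\eps(x') \ge \heps(x') + (c - O(\tau))I$, giving $\eps - \heps \ge (c/2)I$ on $O \cap D_{-\tau}$ for $\tau$ small. If instead $\ep$ is isotropic on $O \cap D_{-\tau}$, say $\ep = pI$, the hypothesis forces $\lambda_{\max}(-\en(x)) \le p(x') - c$ at each reflected pair; the eigenvalue estimate above then yields $\lambda_{\max}(\heps(x')) \le p(x') - c/2$ for $\tau$ small, which, since $\eps$ is scalar, is equivalent to $\eps - \heps \ge (c/2)I$. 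The magnetic inequality is obtained by the same reasoning applied to $\mu, \mun$.

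The main obstacle is precisely this last step: converting the asymmetric pointwise comparison of $\ep$ and $-\en$ at reflected points into a matrix inequality between $\eps$ and the push-forward $\heps = \mathcal{F}_*\en$. Orthogonal conjugation by $R$ acts trivially on scalar matrices but genuinely distorts anisotropic ones, and the isotropy of one of $\ep,\en$ on each component is exactly what allows this distortion (plus the $O(\tau)$ curvature correction away from $\Gamma$) to be absorbed into the strict positive gap $c$. Once all four matrix inequalities are in place, Theorem~\ref{thm2} applies with $\alpha_1 = \alpha_2 = 0$ and delivers the boundedness, convergence and uniqueness claims of Corollary~\ref{corisotropic-2}.
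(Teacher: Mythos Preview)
Your proof is correct and follows the same approach as the paper: use the normal reflection $\mathcal{F}$, exploit that $\nabla\mathcal{F}$ equals an orthogonal reflection on $\Gamma$ up to an $O(\tau)$ correction, and use the isotropy of one of $\ep,\en$ (resp.\ $\mup,\mun$) so that conjugation by $R$ is harmless, reducing the hypothesis to the matrix inequality required by Theorem~\ref{thm2} with $\alpha_1=\alpha_2=0$. One small correction: the $O(\tau)$ perturbation of $\heps$ away from $-R\,\en\,R^T$ follows from the $C^1$-regularity of $\mathcal{F}$ together with the mere boundedness of $\en,\mun$ (guaranteed by uniform ellipticity), not from any $C^1$-regularity of $\en,\mun$, which is not assumed here.
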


\begin{remark} \rm
It is worth comparing Corollary \ref{corisotropic-2} with Corollary \ref{corADN}. First,  in Corollary \ref{corisotropic-2},  one does not require $\ep, \en, \mup, \mun$  to be $C^{1}$ near $\Gamma$.  Second,   in Corollary \ref{corADN}, one  does not require any isotropy conditions on $\ep, \en, \mup, \mun$.  
\end{remark}

\begin{proof} For $\tau > 0$ sufficiently small (the smallness depends only on $D$), define  ${\mathcal F}: D_{\tau} \cup \Gamma \cup  D_{-\tau} \to D_{\tau} \cup \Gamma \cup  D_{-\tau}$ by
\begin{equation*}
{\mathcal F}(x_\Gamma + t \nu(x_\Gamma)) = x_\Gamma - t \nu(x_\Gamma) \quad \forall \, x_\Gamma \in \Gamma, \, t \in (-\tau, \tau) 
\end{equation*}
and set,  in  $D_{-\tau}$,  
\begin{equation*}
(\heps, \hmu) =  ({\mathcal F}_*\en, {\mathcal F}_*\mun). 
\end{equation*}
For the simplicity of the presentation, we assume that $D_{-\tau}$ is connected. We will only consider the case
\begin{equation*}
\ep |_{D_{-\tau} }  \mbox{  is isotropic},  
\end{equation*}
\begin{equation*}
\mup  |_{D_{-\tau} }  \mbox {  is isotropic},  
\end{equation*}
\begin{equation*}
\ep \big(x_\Gamma - t \nu(x_\Gamma) \big)  \ge - \en \big(x_\Gamma + t \nu(x_\Gamma) \big)  +  c I,   
\end{equation*}
and
\begin{equation*}
\mup \big(x_\Gamma - t \nu(x_\Gamma) \big) \ge - \mun \big(x_\Gamma + t \nu(x_\Gamma) \big) +  c I 
\end{equation*}
 for every $x_{\Gamma} \in \Gamma \cap O$ and for every $0 < t < \tau$, for some $c>0$. The other cases can be dealt similarly. 
 
 For  $x' \in \bar D_{-\tau}$, set $x = {\mathcal F}^{-1}(x')$ and ${\mathcal J}(x) = \det \nabla {\mathcal F}(x)$. We have, for $x \in \Gamma$, 
$$ 
\nabla {\mathcal F}^{-1} (x)  \nabla {\mathcal F}^{-T} (x) =  I \mbox{ and } \quad {\mathcal J} (x) = - 1. 
$$ 
Since $\ep$ is isotropic and $\ep \big(x_\Gamma - t \nu(x_\Gamma) \big)  \ge - \en \big(x_\Gamma + t \nu(x_\Gamma) \big)  +  c I$ in $D_{-\tau}$, it follows from the definition of $\heps$ that, for sufficiently small $\tau$, 
\begin{equation}\label{corisotropic-2-1}
\eps(x') \ge \heps(x') + c I/2 \mbox{ for } x' \in D_{-\tau}. 
\end{equation}
Similarly, we have, for sufficiently small $\tau$,
\begin{equation}\label{corisotropic-2-2}
\mu(x') \ge \hmu(x') + c I/2 \mbox{ for } x' \in D_{-\tau}. 
\end{equation}
Take $\tau$ sufficiently small such that \eqref{corisotropic-2-1} and \eqref{corisotropic-2-2} hold.  Applying Theorem~\ref{thm2}, we obtain the conclusion. 
\end{proof}

Here is a direct consequence of Corollary \ref{corisotropic-2}. Let $\Omega_1$ and $\Omega_2$ be open subsets of $D$ such that $\Omega_1 \cap \Omega_2 = \emptyset$,  $\bar D = \bar \Omega_1 \cup \bar \Omega_2$, $\partial \Omega_1 \cap \partial D  \neq \emptyset$, and $\partial \Omega_2 \cap \partial D  \neq \emptyset$. Assume that $\eps^- = - \gamma_1 I $ in $\Omega_1$ and $ = - \gamma_2 I $ in $\Omega_2$, and $\eps^+ = I$ in $\mR^3 \setminus D$ for some constants $\gamma_1, \gamma_2 >0$ with $\min\{ \gamma_1, \gamma_2\} > 1$ or $\max\{ \gamma_1, \gamma_2\} < 1$. For $J \in L^2(\mR^3)$ with $\supp J \subset B_{R_0}$, let $(E_\delta, H_\delta) \in [H_{\loc} (\curl, \mR^3)]^2$ be the unique radiating solution of \eqref{Main-eq-delta} for $0  < \delta < 1$. We have $(E_\delta, H_\delta)$ is bounded in $[L^2_{\loc}(\mR^3)]^6$. Moreover, $(E_\delta, H_\delta)$ converges to $(E_0, H_0)$ strongly in $[L^2_{\loc}(\mR^3)]^6$, the unique radiating solution of \eqref{Main-eq-delta} with $\delta = 0$. Note that this setting  is out of the scope of Corollary~\ref{corADN} for  $\gamma_1 \neq \gamma_2$.

\medskip 

Here is another consequence of Theorem~\ref{thm2} for which $\alpha_1  = \alpha_2 = 1$. We first introduce
\begin{definition}  \rm A connected component of  $\Gamma$ is called strictly convex if it is  the boundary of a strictly convex set.
\end{definition}

We have

\begin{corollary}\label{corisotropic-3}
Let $0 < \delta < 1$, $\tau > 0$, $J \in [L^2(\mR^3)]^3$ with $\supp J \subset  B_{R_0} \setminus (D_\tau \cup D_{-\tau})$, and  
let $(E_\delta, H_\delta) \in [H_{\loc} (\curl, \mR^3)]^2$ be the unique radiating solution of \eqref{Main-eq-delta}. Let $\eeps, \mmu$ be positive symmetric matrix-valued functions defined in $D_{-\tau} \cup \Gamma \cup D_{\tau}$ such that they are {\rm constant} and {\rm isotropic} on each connected component of their domain of definition. 
Assume that 
$$
\mbox{$D$ is of class $C^3$,  each connected component of $\Gamma$  is  strictly convex}, 
$$
\begin{equation*}
(\ep, \mup) = (\eeps, \mmu) \mbox{ in } D_{-\tau}, \quad \mbox{ and } \quad (\en, \mun)  =  -  (\eeps, \mmu) \mbox{ in } D_{\tau}.
\end{equation*}
Then, for $R> 0$ and open  $ V \supset \Gamma$,  
\begin{equation}\label{T1-1-coucou}
\| (E_\delta, H_\delta) \|_{L^2(B_R \setminus V)} \le C_{R, V} \| J\|_{L^2(\mR^3)} \quad \forall \, R > 0, 
\end{equation}
for some positive constant $C_{R, V}$ independent of $\delta$ and $J$.  
Moreover,  for a sequence $(\delta_n) \to 0$, up to a subsequence,  $(E_{\delta_n}, H_{\delta_n})$ converges to $(E_0, H_0)$ strongly in $L^2_{\loc}(\mR^3 \setminus \Gamma)$ as $n \to + \infty$, where  $(E_0, H_0) \in [H_{\loc}(\curl, \mR^3 \setminus \Gamma)]^2$ is a radiating solution of \eqref{Main-eq-delta} with $\delta = 0$ and \eqref{T1-1-coucou} holds with $\delta = 0$. 
\end{corollary}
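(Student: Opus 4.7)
The plan is to verify the hypotheses of Theorem~\ref{thm2} with $\alpha_{1} = \alpha_{2} = 1$ by constructing an appropriate reflection $\mathcal{F}$ through each component of $\Gamma$. Because $\ep = \eeps I$ and $\en = -\eeps I$ are constant and isotropic in each connected component of the tubular neighborhood, for any such reflection one has $\heps = \mathcal{F}_{*} \en = -\eeps\, \mathcal{F}_{*} I$, so $\eps - \heps = \eeps (I + \mathcal{F}_{*} I)$, and the inequality $\eps - \heps \ge c \, d_{\Gamma} I$ becomes equivalent to $I + \mathcal{F}_{*} I \ge c'\, d_{\Gamma} I$. The reduction for $\mu - \hmu$ is identical, and since the construction below will not depend on $\eeps$ or $\mmu$ the same $\mathcal{F}$ handles both conditions. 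The task is thus purely geometric: produce a diffeomorphism $\mathcal{F}$ with $I + \mathcal{F}_{*} I \ge c' \, d_{\Gamma} I$ in a tubular neighborhood of $\Gamma$.

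I would first check that the naive normal reflection $\mathcal{F}_{0}(x_{\Gamma} - t\nu(x_{\Gamma})) = x_{\Gamma} + t\nu(x_{\Gamma})$ is inadequate. In an orthonormal principal-curvature frame $(e_{1}, e_{2}, \nu)$ at $x_{\Gamma}$, one has $\partial_{u_{i}}(x_{\Gamma} \pm t\nu) = (1 \pm t\kappa_{i}) e_{i}$ with $\kappa_{1}, \kappa_{2}$ the principal curvatures, and a direct Jacobian computation yields
\begin{equation*}
I + \mathcal{F}_{0 *} I = 2t \cdot \mathrm{diag}(\kappa_{2} - \kappa_{1},\; \kappa_{1} - \kappa_{2},\; \kappa_{1} + \kappa_{2}) + O(t^{2}),
\end{equation*}
whose tangent eigenvalues have opposite signs unless $\Gamma$ is locally spherical, so strict convexity alone does not rescue this choice. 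I would therefore modify the reflection to
\begin{equation*}
\mathcal{F}(x_{\Gamma} - t\nu(x_{\Gamma})) := x_{\Gamma} + \bigl(t + a(x_{\Gamma}) t^{2}\bigr)\, \nu(x_{\Gamma}),
\end{equation*}
for a function $a \in C^{1}(\Gamma)$ to be chosen; repeating the Jacobian calculation (the $x_{\Gamma}$-dependence of $a$ contributes only off-diagonal $O(t^{2})$ terms, and $\partial_{t} g = 1 + 2a(x_\Gamma) t$ adds the correction in the normal entry) gives
\begin{equation*}
I + \mathcal{F}_{*} I = 2t \cdot \mathrm{diag}\bigl(\kappa_{2} - \kappa_{1} + a,\; \kappa_{1} - \kappa_{2} + a,\; \kappa_{1} + \kappa_{2} - a\bigr) + O(t^{2}).
\end{equation*}

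All three diagonal entries are uniformly positive precisely when $|\kappa_{1} - \kappa_{2}| < a < \kappa_{1} + \kappa_{2}$ uniformly on $\Gamma$, and this open interval is uniformly nonempty if and only if $\min(\kappa_{1}, \kappa_{2}) > 0$ uniformly on each component of $\Gamma$---which is exactly the strict convexity hypothesis. Since $D$ is of class $C^{3}$, the principal curvatures lie in $C^{1}(\Gamma)$, so a concrete smooth admissible choice is $a(x_{\Gamma}) = \sqrt{\kappa_{1}(x_{\Gamma})^{2} + \kappa_{2}(x_{\Gamma})^{2}}$: the identities $(\kappa_{1} \mp \kappa_{2})^{2} = \kappa_{1}^{2} + \kappa_{2}^{2} \mp 2\kappa_{1}\kappa_{2}$ give $|\kappa_{1} - \kappa_{2}| < a < \kappa_{1} + \kappa_{2}$ whenever $\kappa_{1}\kappa_{2} > 0$, and $a$ is smooth where $\kappa_{1}^{2} + \kappa_{2}^{2} > 0$. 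This yields $\eps - \heps \ge c \, d_{\Gamma} I$ and $\mu - \hmu \ge c\, d_{\Gamma} I$ in a tubular neighborhood of $\Gamma$ for some $c > 0$; these inequalities extend to all of $D_{-\tau}$ after smoothly extending $\mathcal{F}$ to a diffeomorphism $D \setminus \bar{U} \to D_{-\tau}$ for some smooth $U \Subset D$, since $d_{\Gamma}$ is bounded below away from $\Gamma$ and one only needs $\heps$ to stay uniformly below $\eps$ there. The support hypothesis $\mathrm{supp}\, J \cap (D_{\tau} \cup D_{-\tau}) = \emptyset$ is exactly the condition $\mathrm{supp}\, J \cap (O \cup \mathcal{F}^{-1}(O)) = \emptyset$ required in Theorem~\ref{thm2} when $\alpha_{1} + \alpha_{2} > 0$. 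All hypotheses of Theorem~\ref{thm2} with $\alpha_{1} = \alpha_{2} = 1 < 2$ are therefore satisfied, and the conclusions of the corollary---the bound \eqref{T1-1-coucou}, the subsequential strong $L^{2}_{\loc}(\mathbb{R}^{3} \setminus \Gamma)$ convergence, and the radiating limit---follow directly. The main obstacle is the careful bookkeeping of the first-order expansion of $\mathcal{F}_{*} I$ in the curvilinear principal-curvature frame (whose basis varies along $\Gamma$) together with the clean smooth extension of $\mathcal{F}$, both of which are standard differential-geometric routine under $C^{3}$ regularity of $D$.
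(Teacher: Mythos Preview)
Your proof is correct and follows essentially the same strategy as the paper: reduce to Theorem~\ref{thm2} with $\alpha_1=\alpha_2=1$ by constructing a reflection of the form $x_\Gamma - t\nu \mapsto x_\Gamma + (t + a(x_\Gamma)t^2)\nu$, i.e., a quadratic perturbation of the naive normal reflection whose correction depends on the second fundamental form. The paper does the same, choosing the correction $c(x_\Gamma)=\beta\operatorname{trace}\Pi(x_\Gamma)$ with $\beta$ close to $-1$ and deferring the verification of the pushforward inequality to \cite[Proof of Corollary~3]{Ng-WP}, whereas you carry out the first-order expansion of $I+\mathcal{F}_*I$ explicitly and pick $a=\sqrt{\kappa_1^2+\kappa_2^2}$; your choice lands on the alternative $\eps-\heps\ge c\,d_\Gamma I$ rather than the paper's $\heps-\eps\ge c\,d_\Gamma I$, but Theorem~\ref{thm2} accepts either. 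One small remark: the sentence ``the principal curvatures lie in $C^1(\Gamma)$'' is not literally true at umbilical points, but your actual choice $a=\sqrt{\kappa_1^2+\kappa_2^2}=\sqrt{(\kappa_1+\kappa_2)^2-2\kappa_1\kappa_2}$ depends only on the mean and Gauss curvatures, which \emph{are} $C^1$ for a $C^3$ boundary, so the construction is fine.
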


\begin{proof}  For simple presentation, we assume that $D$ is convex. Let ${\mathcal F}$ be defined as follows:  
\begin{equation*}
x_\Gamma - t \nu (x_\Gamma) \mapsto x_\Gamma + t \big[1 + t c(x_\Gamma) \big] \nu(x _\Gamma),   
\end{equation*}
for $x_\Gamma \in \Gamma$ and $0 < t < \tau$ (small). Here  $c(x_\Gamma) = \beta \mbox{trace} \Pi(x_\Gamma)$ where $\Pi(x_\Gamma)$ is the second fundamental form of $\Gamma$ at $x_\Gamma$ and $-1 < \beta < 0$. By taking $\beta$ sufficiently close to $-1$, one can prove that, see \cite[Proof of Corollary 3]{Ng-WP}, 
$$
{\mathcal F}_*(-I) - I \ge \gamma d_{\Gamma} I \mbox{ in } D_{-\tau}.  
$$
for some positive constant $\gamma$ (note that the definition of ${\mathcal F}_*$ in this paper is different from the one in \cite{Ng-WP} in which $|\det (\nabla {\mathcal F})|$ is used instead of $\det (\nabla {\mathcal F})$). We are now in the range of the application of Theorem~\ref{thm2} and the conclusion follows. 
\end{proof}

 Here is an immediate application of Corollary~\ref{corisotropic-3}. Let $D$ be a  smooth strictly convex set of class $C^3$ and assume that, for $\delta \ge 0$,  
\begin{equation*}
(\eps_\delta, \mu_\delta)  = \left\{\begin{array}{cl}
I, I & \mbox{ in } \mR^3 \setminus D, \\[6pt]
- I  + i \delta I, - I  + i \delta I  & \mbox{ in } D. 
\end{array} \right. 
\end{equation*}
Let $J \in [L^2(\mR^3)]^3$ with $\supp J \subset B_{R_0} \setminus (D_\tau \cup D_{-\tau})$ and let $(E_\delta, H_\delta)$ be the unique radiating solution of \eqref{Main-eq-delta}. As a consequence of Theorem~\ref{thm2} and \cite[Corollary 3]{Ng-WP},  one has
\begin{equation*}
\| (E_\delta, H_\delta) \|_{L^2(B_R \setminus V)} \le C_{R, V} \| J\|_{L^2(\mR^3)} \quad \forall \, R > 0, 
\end{equation*}
for some positive constant $C_R$ independent of $\delta$ and $J$. Corollary~\ref{corisotropic-3} is a variant of \cite[Corollary 3]{Ng-WP} for the Maxwell equations.

\appendix
\section{Proof of Proposition~\ref{pro-opt}}

We only consider the case  $(\mu, \hmu )$ does not satisfy the complementing  conditions at some point $x_0 \in \partial \Omega$. The other case can be dealt similarly.  Then,  from  \cite{ADNII},  there exist sequences
 $\big( ( u_{n}, \hat{u}_{n} ) \big)\subset H^{2}(\Omega),$ 
 $\big( (f_{n}, \hat f_{n}) \big)  \subset L^{2}(\Omega),$ $ \big( p_{n} \big) \subset H^{\frac{3}{2}}(\partial\Omega)$ and 
 $\big( q_{n} \big)  \subset H^{\frac{1}{2}}(\partial\Omega)$ such that 
 \begin{equation*}
 \begin{array}{c}
  \operatorname*{div} \left( \mu \nabla u_{n} \right) = f_{n}, \quad  \quad   \operatorname*{div} \left( \hmu \nabla \hat{u}_{n} \right) 
  = \hat f_{n} \quad \text{ in } \Omega, \\[6pt]
  u_{n} -  \hat{u}_{n} = p_{n} \quad \text{ on } \partial\Omega, \\[6pt]
   \left( \mu\nabla u_{n} - \hmu \nabla \hat{u}_{n} \right) \cdot \nu = q_{n} \quad \text{ on } \partial\Omega, 
  \end{array}
 \end{equation*}
$ \big( \left\lVert f_{n}\right\rVert_{L^{2}(\Omega)} \big), \,  \big(\lVert \hat f_{n}\rVert_{L^{2}(\Omega)} \big), 
\big(\left\lVert p_{n}\right\rVert_{H^{\frac{3}{2}}(\partial\Omega)} \big), \,  \big(\left\lVert q_{n}\right\rVert_{H^{\frac{1}{2}}(\partial\Omega)} \big)$ and $ 
\big( \left\lVert \left( u_{n}, \hat{u}_{n} \right)\right\rVert_{H^{1}(\Omega)} \big) $ are  bounded and 
$\lim_{n \to + \infty}\left\lVert \left( u_{n}, \hat{u}_{n} \right)\right\rVert_{H^{2}(\Omega)} = + \infty .$ 

Let $\phi_{n} \in H^{2}$ and $\hat{\phi}_{n} \in H^{2}$  be the unique solution of 
\begin{align*}
  \left\{ \begin{aligned}
                \operatorname*{div} \left( \mu \nabla \phi_{n} \right) &= f_{n} &&\text{ in } \Omega, \\
  \phi_{n} &= p_{n} &&\text{ on } \partial\Omega.
               \end{aligned}\right. \qquad \text{ and } \qquad \left\{\begin{aligned}
               \operatorname*{div} \left( \hmu \nabla \hat{\phi}_{n} \right) &= \hat f_{n} &&\text{ in } \Omega, \\
               \hat{\phi}_{n} &= 0 &&\text{ on } \partial\Omega.
               \end{aligned}\right.\end{align*}
By the standard theory of elliptic equations, one has 
 $$ \left\lVert \phi_{n} \right\rVert_{H^{2}} \leq C \left(  \left\lVert f_{n} \right\rVert_{L^{2}} +\left\lVert p_{n} \right\rVert_{H^{\frac{3}{2}}(\partial\Omega)} \right) \quad \text{ and } \quad \left\lVert \hat{\phi}_{n} \right\rVert_{H^{2}} \leq C \left\lVert \hat f_{n} \right\rVert_{L^{2}}, 
 $$
 for some positive constant $C$ independent of $n$.  Set
$$
H_{n} = \nabla u_{n} - \nabla \phi_{n} \quad \mbox{ and } \quad 
\hH_{n} = \nabla \hat{u}_{n} - \nabla \hat{\phi}_{n}, \mbox{ in } \Omega.
$$ 
We have, in $\Omega$, 
$$  \operatorname*{curl} H_{n} = 0 =  \operatorname*{curl} \hH_{n} \quad \text{ and } \quad  
\operatorname*{div} \left( \mu H_{n} \right) = 0 =  \operatorname*{div} \big( \hmu  \hH_{n} \big).
$$
Since $u_{n} -  \hat{u}_{n} = p_{n} = \phi_{n}$ on $\partial\Omega,$ and $\hat \varphi_n = 0$ on $\partial \Omega$,  we deduce  that 
$$ \nu \times \big( H_{n} - \hH_{n} \big) = 0 \qquad \text{ on } \partial\Omega.$$

Set 
$$
E_n = \hat E_n = 0 \mbox{ in } \Omega 
$$
and, in $\Omega$,  
$$ 
J_{e,n} = -i\omega \mu H_{n}, \quad \hJ_{e,n} = -i\omega \hmu \hH_{n}, \quad 
J_{m,n} =0 , \quad \mbox{ and } \quad \hJ_{m,n} = 0.
$$ 
One can easily check that $(E_n, \hE_n, H_n, \hH_n)$ and $(J_{e, n}, \hJ_{e, n}, J_{m, n}, \hJ_{e, n})$ satisfies all the required properties. \qed

\providecommand{\bysame}{\leavevmode\hbox to3em{\hrulefill}\thinspace}
\providecommand{\MR}{\relax\ifhmode\unskip\space\fi MR }
% \MRhref is called by the amsart/book/proc definition of \MR.
\providecommand{\MRhref}[2]{%
  \href{http://www.ams.org/mathscinet-getitem?mr=#1}{#2}
}
\providecommand{\href}[2]{#2}

\end{document}